\newtheorem{theorem}{Theorem}
\theoremstyle{plain}
\newtheorem{claim}[theorem]{Claim}
\newtheorem{conjecture}[theorem]{Conjecture}
\newtheorem{construction}[theorem]{Construction}
\newtheorem{corollary}[theorem]{Corollary}
\newtheorem{definition}[theorem]{Definition}
\newtheorem{fact}[theorem]{Fact}
\newtheorem{lemma}[theorem]{Lemma}
\newtheorem{proposition}[theorem]{Proposition}
\numberwithin{equation}{section}
\numberwithin{theorem}{section}
\numberwithin{case}{section}
\numberwithin{subcase}{case}
\def\B{\mathcal{B}}
\def\F{\mathcal{F}}
\def\G{\mathcal{G}}
\def\K{\mathcal{K}}
\def\M{\mathcal{M}}
\def\cP{\mathcal{P}}
\def\Q{\mathcal{Q}}
\def\R{\mathcal{R}}
\def \a{\alpha}
\def \e{\epsilon}
\def \r{\gamma}
\def \bfu{\mathbf{u}}
\def \bfi{\mathbf{i}}
\def \bfv{\mathbf{v}}
\def \bfw{\mathbf{w}}
\def\eg{\emph{e.g.}}
\def\ex{\text{ex}}
\def\coex{{\rm coex}}
\begin{document}

\title[]{Codegree conditions for tiling complete $k$-partite $k$-graphs and loose cycles}

\thanks{
The second author was supported by FAPESP (Proc. 2013/03447-6, 2014/18641-5, 2015/07869-8).
The third author is partially supported by NSF grants DMS-1400073 and DMS 1700622.}
\author{Wei Gao}
\address{Department of Mathematics and Statistics, Auburn University, Auburn, AL 36830}
\email[Wei Gao]{wzg0021@auburn.edu}
\author{Jie Han}
\address{Department of Mathematics, University of Rhode Island, 5 Lippitt Road, Kingston, RI, USA, 02881}
\email[Jie Han]{jie\_han@uri.edu}
\author{Yi Zhao}
\address
{Department of Mathematics and Statistics, Georgia State University, Atlanta, GA 30303} 
\email[Yi Zhao]{yzhao6@gsu.edu}

\begin{abstract} 
Given two $k$-graphs ($k$-uniform hypergraphs) $F$ and $H$, a perfect $F$-tiling (or $F$-factor) in $H$ is a set of vertex disjoint copies of $F$ that together cover the vertex set of $H$.
%A $k$-graph $F$ is $k$-partite if there exists a partition of $V(F)$ into $k$ parts such that every edge of $F$ contains exactly one vertex from each part.
For all complete $k$-partite $k$-graphs $K$, Mycroft proved a minimum codegree condition that guarantees a $K$-factor in an $n$-vertex $k$-graph, which is tight up to an error term $o(n)$.
%He also conjectured that the error term $o(n)$ should be replaced by a constant that only depends on $F$.
In this paper we improve the error term in Mycroft's result to a sub-linear term that relates to the Tur\'an number of $K$ when 
the differences of the sizes of the vertex classes of $K$ are co-prime. 
Furthermore, we find a construction which shows that our improved codegree condition is asymptotically tight in infinitely many cases thus disproving a conjecture of Mycroft. 
At last, we determine exact minimum codegree conditions for tiling $K^{(k)}(1, \dots, 1, 2)$ and tiling loose cycles thus generalizing the results of Czygrinow, DeBiasio, and Nagle, and of Czygrinow, respectively.
\end{abstract}

\maketitle

\section{Introduction}

Given $k\ge 2$, a $k$-uniform hypergraph (in short, \emph{$k$-graph}) is a pair $H=(V, E)$, where $V$ is a finite vertex set and $E$ is a family of $k$-element subsets of $V$. Given a $k$-graph $H$ and a set $S$ of $d$ vertices in $V(H)$, $1\le d\le k-1$, we denote by $\deg_H(S)$ the number of edges of $H$ containing $S$. The \emph{minimum $d$-degree $\delta _{d}
(H)$} of $H$ is the minimum of $\deg(S)$ over all $d$-subsets $S$ of $V(H)$.
Furthermore, the minimum 1-degree is usually referred as the \emph{minimum vertex degree} and the minimum $(k-1)$-degree is referred as the \emph{minimum collective degree (codegree)}.

As a natural extension of matching problems, (hyper)graph \emph{tiling} (alternatively called \emph{packing}) has received much attention in the last two decades (see \cite{KuOs-survey} for a survey). Given two (hyper)graphs $F$ and $H$, a \emph{perfect $F$-tiling}, or an \emph{$F$-factor}, of $H$ is a spanning subgraph of $H$ that consists of vertex disjoint copies of $F$. 
Here we are interested in minimum degree threholds that force perfect packings in hypergraphs.
Given a $k$-graph $F$ and an integer $n$ divisible by $|F|$, let ${\delta(n,F)}$ be the smallest integer $t$ such that every $n$-vertex $k$-graph $H$ with $\delta_{k-1}(H)\ge t$ contains a perfect $F$-tiling.

Perfect tilings for graphs are well understood.
In particular, extending the results of Hajnal and Szemer\'edi~\cite{HaSz} and Alon and Yuster~\cite{AY96} (see also~\cite{KSS-AY}), 
K\"uhn and Osthus~\cite{KuOs09} determined $\delta(n, F)$ for all graphs $F$, up to an additive constant, for sufficiently large $n$.

Over the last few years there has been a growing interest in obtaining degree conditions that force a perfect $F$-tiling in $k$-graphs for $k \geq 3$. 
In general, this appears to be much harder than the graph case (see a recent survey~\cite{zsurvey}). 
Let $K_4^3$ be the complete 3-graph on four vertices, and let $K_4^{3-}$ be the (unique) 3-graph on four vertices with three edges. 
Let $C_2^3$ be the unique 3-graph on four vertices with two edges. 
Lo and Markstr\"om \cite{LM1} proved that $\delta(n, K_4^3)=(1+o(1))3n/4$, and independently Keevash and Mycroft \cite{KM1} determined the exact value of $\delta(n, K_4^3)$  for sufficiently large $n$. 
In \cite{LM2} Lo and Markstr\"om proved that $\delta(n, K_4^{3-})=(1+o(1))n/2$. 
Very recently, Han Lo, Treglown and Zhao~\cite{HLTZ_K4} determined $\delta(n, K_4^{3-})$ exactly for large $n$. 
K\"uhn and Osthus \cite{KO} showed that $\delta(n, C_2^3)=(1+o(1))n/4$, and Czygrinow, DeBiasio, and Nagle \cite{CDN} determined $\delta(n, C_2^3)$ exactly for large $n$. 
Han and Zhao~\cite{HZ3} determined the exact minimum vertex degree threshold for perfect $C_2^3$-tiling for large $n$. 
With more involved arguments, Han, Zang, and Zhao~\cite{HZZ_tiling} determined the minimum {vertex} degree threshold for perfect $K$-tiling asymptotically for all complete $3$-partite $3$-graphs $K$.

Mycroft~\cite{My14} proved a general result on tiling $k$-partite $k$-graphs. To state his result, we need the following definitions. Let $F$ be a $k$-graph on a vertex set $U$ with at least one edge.
A \emph{$k$-partite realization} of $F$ is a partition of $U$ into vertex classes $U_1,\dots, U_k$ so that for any $e\in E(F)$ and $1\le j\le k$ we have $|e\cap U_j|=1$. 
We say that $F$ is \emph{$k$-partite} if it admits a $k$-partite realization.
Define
\[
\mathcal{S}(F):= \bigcup_{\chi} \{|U_1|,\dots, |U_k|\} \text{ and } \mathcal{D}(F):=\bigcup_{\chi} \{||U_i| - |U_j||: i, j\in [k]\},
\]
where in each case the union is taken over all $k$-partite realizations $\chi$ of $F$ into vertex classes $U_1,\dots, U_k$ of $F$.
Then $\gcd(F)$ is defined to be the greatest common divisor of the set $\mathcal{D}(F)$ (if $\mathcal{D}(F)=\{0\}$ then $\gcd(F)$ is undefined).
We also define
\[
\sigma(F):= \frac{\min_{S\in \mathcal{S}(F)}S}{|V(F)|},
\]
and thus in particular, $\sigma(F)\le 1/k$.
Mycroft~\cite{My14} proved the following:
\begin{equation} \label{eq:nk1}
\delta(n, F) \le \left\{\begin{array}{ll}
{n}/{2} +o(n) & \text{if } \mathcal{S}(F)=\{1\} \text{ or } \gcd(\mathcal{S}(F))>1;\\
\sigma(F)n +o(n)  & \text{if } \gcd(F)=1;\\
\max \{\sigma(F)n, n/p\}+o(n) & \text{if } \gcd(\mathcal{S}(F))=1 \text{ and } \gcd(F)=d>1, 
\end{array} \right.\end{equation} 
where $p$ is the smallest prime factor of $d$. 
Moreover, Mycroft~\cite{My14} showed that equality holds in~\eqref{eq:nk1} for all complete $k$-partite $k$-graphs $F$, as well as a wide class of other $k$-partite $k$-graphs.
Furthermore, he conjectured that the error terms in~\eqref{eq:nk1} can be replaced by a (sufficiently large) constant that depends only on $F$. 

\begin{conjecture}\cite{My14}\label{conj:Mycroft}
Let $F$ be a $k$-partite $k$-graph.
Then there exists a constant $C$ such that the error term $o(n)$ in~\eqref{eq:nk1} can be replaced by $C$.
\end{conjecture}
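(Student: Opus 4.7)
The plan is the absorbing method of R\"odl, Ruci\'nski and Szemer\'edi combined with a stability-based dichotomy. The strategy has three steps: (i) produce an absorbing set $A \subseteq V(H)$ of size at most $\alpha n$ with the property that $A$ together with any balanced leftover of size at most $\gamma n$ admits an $F$-factor; (ii) find an almost-perfect $F$-tiling of $V(H) \setminus A$ leaving only a \emph{constant} number of uncovered vertices (this is the step that must be improved from $o(n)$ to $O(1)$); and (iii) finish by applying the absorber to the residue. Step (i) follows the standard route: the codegree hypothesis guarantees polynomially many small absorbing gadgets for each balanced tuple, and a random selection yields a fixed absorbing set. This step is robust to the size of the eventual leftover, so it delivers the same absorber whether one aims for $o(n)$ or $O(1)$ slack.

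The crux of the conjecture lies in step (ii), upgrading the regularity-based almost-perfect tiling (which inherently leaves $o(n)$ vertices uncovered) to one with only constant slack. Here I would split into cases. In the \emph{non-extremal case}, where $H$ is far in edit distance from each of the three extremal configurations witnessing tightness of~\eqref{eq:nk1}, the codegree hypothesis together with the non-extremality should unlock enough flexibility to pack copies of $F$ iteratively, either through a lattice-based absorption framework inside each iteration or via a direct Tur\'an-type covering argument, driving the uncovered set down to a constant. In the \emph{extremal case}, where $H$ is close to a specific extremal configuration, one analyzes that configuration explicitly and constructs a perfect tiling by matching $F$-copies to the near-partition structure of $H$ and using the small codegree slack to repair local defects.

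The main obstacle, and the one I expect to be genuine, is the extremal analysis in the third regime of~\eqref{eq:nk1}, where $\gcd(F) = d > 1$ and the threshold value is $n/p$. The extremal construction here is divisibility-based, and near-extremal hypergraphs carry divisibility defects that the codegree condition cannot correct locally; one would have to realign an entire $F$-tiling globally across the vertex classes. If such defects can scale with the Tur\'an number of the associated partite structure, then no constant correction to the threshold suffices, and the program fails in that regime. Indeed the paper's abstract announces that Conjecture~\ref{conj:Mycroft} is disproved in general, with the true correction being sub-linear and Tur\'an-type, so the outline above should be expected to succeed only in restricted situations, for instance when $\gcd(F) = 1$, or for the specific families $F = K^{(k)}(1, \dots, 1, 2)$ and loose cycles treated in the paper.
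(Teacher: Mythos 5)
This statement is a conjecture that the paper \emph{disproves}, not proves, so there is no paper proof to compare your attempt against. You sensibly recognize this at the end, but your diagnosis of where the conjecture fails is backwards, and the mechanism you suggest (global divisibility realignment in the $\gcd(F)=d>1$ regime) is not the obstruction the paper exhibits.

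The paper's counterexample lives in the \emph{second} regime of~\eqref{eq:nk1}, where $\gcd(K)=1$: Proposition~\ref{disprove} shows that for any complete $k$-partite $k$-graph $K = K^{(k)}(a_1,\dots,a_k)$ with $\gcd(K)=1$ and $a_{k-1}\ge 2$, one has $\delta(n,K) \ge a_1 n/m + (1-o(1))\sqrt{(m-a_1)n/m}$, which rules out any constant error term. The construction (Construction~\ref{cons:lb}) is a \emph{Tur\'an-type} strengthening of the space barrier: one takes the usual two-class barrier $A\cup B$ with $|A|$ slightly below $a_1 n/m$ and, rather than making $B$ independent, places on $B$ a $K^{(k)}(b_1,\dots,b_k)$-free hypergraph with positive minimum codegree (possible via Mubayi's finite-geometry constructions). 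Every copy of $K$ must still use at least $a_1$ vertices from $A$, so no $K$-factor exists, yet the codegree of the whole host now exceeds $a_1 n/m$ by the codegree Tur\'an number of $K$ restricted to $B$. The obstruction is therefore local and density-theoretic, not a global arithmetic misalignment of an $F$-tiling across vertex classes. Your suggestion that the outline ``should be expected to succeed\dots when $\gcd(F)=1$'' thus has it exactly the wrong way around; in that regime the constant-error claim already fails whenever $a_{k-1}\ge 2$, and the cases $K^{(k)}(1,\dots,1,2)$ and loose cycles survive only because their Tur\'an numbers are small enough that the correction term $f(n)$ in Theorem~\ref{thm:main} is bounded.

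As for the regime $\gcd(F)=d>1$ you single out, the paper does not settle the conjecture there at all; the concluding remarks explicitly defer a sharper analysis of that case to future work. Your proposed high-level machinery (absorbing set, non-extremal/extremal dichotomy, lattice-based packing) is, incidentally, very close to what the paper \emph{does} use to prove its positive results Theorems~\ref{thm:main},~\ref{thm:112}, and~\ref{thm:cyc_pac}; but those positive results carry a sub-linear Tur\'an-type correction $f(n)$ in the codegree hypothesis precisely to account for the obstruction described above, and the exact-constant results are restricted to $K^{(k)}(1,\dots,1,2)$ and $C_s^k$, where $f(n)=O(1)$.
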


Let $K^{(k)}(a_1, \dots, a_k)$ denote the complete $k$-partite $k$-graph with parts of size $a_1, \dots, a_k$. In this paper we always assume that $a_1\le \cdots\le a_k$. Thus $\sigma(K^{(k)}(a_1, \dots, a_k)) = a_1/m$, where $m:=a_1+\cdots + a_k$. 
%In this paper we are mainly interested in the case when $K$ is a complete $k$-partite $k$-graph and $\gcd(K)=1$.
%Note that when $K$ is a complete $k$-partite $k$-graph, it has a unique $k$-partite realization, which we denote by $K:=K^{(k)}(a_1, \dots, a_k)$ for positive integers $a_1\le \cdots\le a_k$.
%In addition, if $\gcd(K)=1$, then we have $\gcd(a_2-a_1, a_3-a_2, \dots, a_k-a_{k-1})=1$.
%, i.e., there exist integers $\ell_1,\dots, \ell_{k-1}$ such that
%\[
%\ell_1 (a_2 - a_1) + \ell_2 (a_3 - a_1) + \cdots + \ell_{k-1} (a_k - a_{1}) =1.
%\]
The well-known \emph{space-barrier} (Construction~\ref{cons:lb0}) shows that 
\begin{equation}
\label{eq:dlb}
\delta(n, K^{(k)}(a_1, \dots, a_k))\ge \frac{a_1}{m} n.
\end{equation}
This shows that the second line of~\eqref{eq:nk1} is asymptotically best possible when $F= K^{(k)}(a_1, \dots, a_k)$ and $\gcd(F)=1$.

We first give a simple construction (Construction~\ref{cons:lb}) that strengthens the space-barrier. Applying this construction, we 
obtain the following proposition, whose Part (1) shows that Conjecture~\ref{conj:Mycroft} is false for all complete $k$-partite $k$-graphs $K$ with $\gcd(K)=1$ and $a_{k-1}\ge 2$.
Given two $k$-graphs $F$ and $H$, we call $H$ \emph{$F$-free} if $H$ does not contain $F$ as a subgraph.
The well-known \emph{Tur\'an number} ${\rm ex}(n, F)$ is the maximum number of edges in an $F$-free $k$-graph on $n$ vertices. Correspondingly, the \emph{codegree Tur\'an number} ${\rm coex}(n, F)$ is the maximum of the minimum codegree of an $F$-free $k$-graph on $n$ vertices. Note that $\coex(n, F) \binom{n}{k-1}/k \le \ex(n, F)$ because an $n$-vertex $k$-graph $H$ with $\delta_{k-1}(H)\ge  \coex(n, F)$ has at least $\coex(n, F) \binom{n}{k-1}/ k$ edges. 

\begin{proposition}\label{disprove}
Let $K:=K^{(k)}(a_1, \dots, a_k)$ such that $a_1\le \cdots \le a_k$ and $m=a_1+\cdots + a_k$. 
\begin{enumerate}
\item If $a_{k-1}\ge 2$, then $\delta(n, K) \ge a_1 n/m + (1-o(1))\sqrt{(m-a_1)n/m}$. 
\item If $a_1 =1$, then
\[
\delta(n, K) \ge \frac{n}{m} + {\rm coex} (\tfrac{m-1}{m}n+1, K).
\]
\end{enumerate}
\end{proposition}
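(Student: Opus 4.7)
\emph{Proof plan.} Both parts are lower-bound assertions, which I would prove by exhibiting an $n$-vertex $k$-graph $H$ that has no $K$-factor yet has large minimum codegree. The two constructions are specializations of Construction~\ref{cons:lb}: in each, $V(H)$ is partitioned as $A\cup B$, every $k$-set meeting $A$ is an edge of $H$, and on $B$ one installs an auxiliary structure tailored to the case.

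For Part~(2), since $a_1=1$, I would take $|A|=n/m-1$ and $|B|=\tfrac{m-1}{m}n+1$, and place on $B$ a $K$-free $k$-graph $H_B$ attaining the codegree Tur\'an number, so that $\delta_{k-1}(H_B)=\coex(|B|,K)$; then set $H=\{e\in\binom{V}{k}:e\cap A\neq\emptyset\}\cup E(H_B)$. First, $H$ admits no $K$-factor: because $H_B$ is $K$-free, every copy of $K$ in $H$ must meet $A$, so a factor would require $|A|\ge n/m$, contradicting $|A|=n/m-1$. Second, the codegree of $H$ is easy to compute: if $S\in\binom{V}{k-1}$ meets $A$ then $\deg_H(S)\ge n-(k-1)$, while if $S\subseteq B$,
\[
\deg_H(S)=|A|+\deg_{H_B}(S)\;\ge\;\tfrac{n}{m}-1+\coex\!\Bigl(\tfrac{m-1}{m}n+1,K\Bigr).
\]
Since $\delta(n,K)$ must exceed the minimum codegree of any such $K$-factor-free $H$, the asserted inequality follows.

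For Part~(1), the hypothesis $a_{k-1}\ge 2$ is what permits enlarging $|A|$ beyond the pure space-barrier by a $\sqrt{|B|}$ correction. I would apply Construction~\ref{cons:lb} with $|A|=a_1n/m-1+s$ and $|B|=(m-a_1)n/m+1-s$ for $s=(1-o(1))\sqrt{(m-a_1)n/m}$, installing on $B$ an auxiliary graph $G$ that is $K_{a_{k-1},a_k}$-free and has minimum degree $(1-o(1))\sqrt{|B|}$; such a $G$ is supplied by a K\"ov\'ari--S\'os--Tur\'an-extremal construction, for instance the incidence graph of a projective plane. Its edges are lifted into $H$ by including every $k$-set in $B$ that contains an edge of $G$, which raises the codegree of every $(k-1)$-set in $B$ by $(1-o(1))\sqrt{|B|}$, giving $\delta_{k-1}(H)\ge|A|+(1-o(1))\sqrt{|B|}=a_1n/m+(1-o(1))\sqrt{(m-a_1)n/m}$. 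The non-factor check is more delicate: any $K$-factor in $H$ would need to absorb the extra $s$ vertices of $A$ by containing at least $s$ copies of $K$ that use more than $a_1$ vertices in $A$, and a careful embedding argument leveraging $a_{k-1}\ge 2$ forces each such copy to embed a $K_{a_{k-1},a_k}$ in $G$, contradicting the extremal choice of $G$.

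The principal obstacle is the non-factor step in Part~(1): translating the presence of $s$ ``heavy'' copies of $K$ into a forbidden $K_{a_{k-1},a_k}$-subgraph of $G$ requires a careful case analysis of how copies of $K$ straddle the bipartition $V=A\cup B$, and this is precisely where the hypothesis $a_{k-1}\ge 2$ enters --- for $a_{k-1}=1$, the forbidden bipartite pattern degenerates to a star and imposes no meaningful density restriction, so the argument simply collapses.
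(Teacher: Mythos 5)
Your Part~(2) is correct and essentially identical to the paper's argument: take $|A|=n/m-1$, put a codegree-extremal $K$-free $k$-graph on $B$, and observe that since $a_1=1$ the only multi-index $(b_1,\dots,b_k)$ with $\sum b_i=m-a_1+1=m$ and $b_i\le a_i$ is $(a_1,\dots,a_k)$ itself, so Construction~\ref{cons:lb} reduces to installing a $K$-free $k$-graph on $B$.

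Part~(1), however, has a fatal flaw. You take $|A|=a_1n/m-1+s$ with $s\sim\sqrt{(m-a_1)n/m}\gg 1$, i.e.\ $|A|\ge a_1n/m$, while keeping all $k$-sets meeting $A$ as edges. But then $H$ \emph{does} have a $K$-factor, so the construction yields no lower bound at all. Indeed, pick $n/m$ pairwise disjoint $a_1$-sets $P_1,\dots,P_{n/m}\subseteq A$ (possible since $|A|\ge a_1n/m$), and distribute the remaining $(m-a_1)n/m$ vertices of $V$ arbitrarily among the copies, $m-a_1$ per copy, partitioned into parts of sizes $a_2,\dots,a_k$. Taking $P_i$ as the smallest part of the $i$-th copy, every crossing $k$-set contains a vertex of $P_i\subseteq A$, hence is an edge of $H$. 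No edges inside $B$ are needed, so the structure you install on $B$ is irrelevant and your ``absorbing'' / forbidden-$K_{a_{k-1},a_k}$ step never gets off the ground. The claim that a copy of $K$ using more than $a_1$ vertices of $A$ must ``embed a $K_{a_{k-1},a_k}$ in $G$'' is simply false.

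The paper's construction goes in the opposite direction: it keeps $|A|=a_1n/m-1$ (strictly below the space-barrier threshold) and gains the extra $\sqrt{\cdot}$ in the codegree not by growing $A$ but by placing on $B$ Mubayi's $K^{(k)}(1,\dots,1,2,2)$-free $k$-graph, which has minimum codegree $(1-o(1))\sqrt{|B|}=(1-o(1))\sqrt{(m-a_1)n/m}$. The role of the hypothesis $a_{k-1}\ge 2$ is to verify that \emph{every} complete $k$-partite pattern $K^{(k)}(b_1,\dots,b_k)$ with $\sum b_i=m-a_1+1$, $1\le b_i\le a_i$, satisfies $b_{k-1}\ge 2$, hence contains $K^{(k)}(1,\dots,1,2,2)$; so any copy of $K$ with fewer than $a_1$ vertices in $A$ would place such a pattern inside $B$, contradicting the choice of $G$. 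This is what forces every copy of $K$ to use at least $a_1$ vertices of $A$, and since $|A|=a_1n/m-1<a_1\cdot(n/m)$, no $K$-factor exists. You should rewrite Part~(1) along these lines, using a $k$-graph on $B$ (not a lifted $2$-graph) and leaving $|A|$ at $a_1n/m-1$.
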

%Then Thus Conjecture~\ref{conj:Mycroft} is false for all complete $k$-partite $k$-graphs $K$ with $\gcd(K)=1$ and $a_{k-1}\ge 2$.

Our main result sharpens the second case of~\eqref{eq:nk1} by using the Tur\'an number and the {Frobenius number}.
Given integers $0\le b_1\le \cdots\le b_k$ such that $\gcd(b_1,\dots, b_k)=1$, the \emph{Frobenius number}  $g(b_1,\dots, b_k)$ is the largest integer that cannot be expressed as $\ell_1 b_1+\cdots +\ell_k b_k$ for any nonnegative integers $\ell_1,\dots, \ell_k$. \footnote{The usual definition of Frobenius numbers requires that all $b_1, \dots, b_k$ are positive and distinct.}
By definition, $g(b_1,\dots, b_k)=-1$ if some $b_i=1$; otherwise $g(b_1,\dots, b_k)>0$. No general formula of $g(b_1,\dots, b_k)$ is known but it is known \cite{ErGr, Vitek75} that $g(b_1,\dots, b_k)\le (b_k-1)^2$.

\begin{theorem}\label{thm:main}
Let $k\ge 3$ and $K:=K^{(k)}(a_1, \dots, a_k)$ such that $a_1\le \cdots \le a_k$, $m=a_1+\cdots + a_k$ and $\gcd(K)=1$. 
Let $n\in m\mathbb{N}$ be sufficiently large.
Suppose $H$ is an $n$-vertex $k$-graph such that
\begin{equation}\label{eq:deg}
\delta_{k-1}(H)\ge \frac{a_1}{m}n + f(n) + C,
\end{equation}
where
\[
f(n) := \max_{1-C\le i\le 1} {{\rm ex}(\tfrac{m-a_1}{m}n +i, K)} k {\binom{\tfrac{m-a_1}{m}n+i}{k-1}}^{-1}
\]
and $C=g(a_2-a_1,\dots, a_k-a_1)+1$.
Then $H$ contains a $K$-factor.
\end{theorem}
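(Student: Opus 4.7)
The plan is to use the absorbing method. The Tur\'an term $f(n)$ and the Frobenius constant $C = g(a_2 - a_1, \dots, a_k - a_1)+1$ in the hypothesis play complementary roles: $f(n)$ supplies the density slack needed to find a copy of $K$ in the residual graph near the end of a near-perfect tiling, while $C$ corrects for the arithmetic of the different part sizes of $K$.

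First I would build an absorbing set $A \subseteq V(H)$ of sufficiently small size such that, for every $U \subseteq V(H) \setminus A$ with $|U|$ divisible by $m$ and $|U|$ at most a constant $C'$ depending on $K$, the induced hypergraph $H[A \cup U]$ contains a $K$-factor. Following the R\"odl-Ruci\'nski-Szemer\'edi absorbing paradigm, for each leftover shape $U$ the codegree hypothesis supplies polynomially many local absorbers, and a random selection yields the desired $A$. The crucial use of $\gcd(K) = 1$ together with the Frobenius bound is to guarantee that absorbers can handle every short leftover configuration: every integer at least $C$ is expressible as a non-negative combination of $a_2 - a_1, \dots, a_k - a_1$, so any arithmetic imbalance in the leftover can be flattened by locally re-distributing vertices among the implicit part-classes of nearby copies of $K$.

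Next I would near-perfectly tile $H_0 := H - A$ by iteratively extracting copies of $K$. After $t$ rounds a direct calculation gives that the residual $H_t$ on $n_t := n - |A| - tm$ vertices satisfies
\[
\delta_{k-1}(H_t) \ge n_t - \tfrac{m - a_1}{m}\, n + f(n) + C - o(n).
\]
While $n_t$ is well above $\tfrac{m-a_1}{m}n$, the right-hand side easily exceeds $\coex(n_t, K) = o(n)$, so a copy of $K$ can be extracted. The definition of $f(n)$ is tailored so that when $n_t$ enters the narrow window $[\tfrac{m-a_1}{m}n + 1 - C,\,\tfrac{m-a_1}{m}n + 1]$, we still have $\coex(n_t, K) \le f(n)$ and the displayed bound remains at least $f(n)+1$; the extra slack $C$ then allows the iteration to continue through the bottleneck for $O(1)$ further rounds until the residual has at most $C'$ vertices. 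Applying the absorbing property of $A$ to this residual completes the desired $K$-factor.

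The main obstacle is the absorbing lemma: the set $A$ must be small enough that the codegree in $H - A$ still satisfies the window inequality at all intermediate residuals, yet large and flexible enough to supply absorbers for every possible short leftover $U$. Balancing these competing demands is where the hypothesis $\gcd(K)=1$ (which guarantees many distinct ``shapes'' of copies of $K$ on which absorbers can be built) and the Frobenius bound $C$ (which bounds the set of leftover arithmetic types that need to be covered) are both used in an essential way. The near-perfect tiling step is comparatively direct once the iteration is calibrated to the definition of $f(n)$, though verifying the precise arithmetic by which the iteration crosses the critical window near $(m-a_1)n/m$ requires care in choosing $|A|$ and in tracking the codegree losses.
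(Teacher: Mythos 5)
Your proposal misidentifies where the terms $f(n)$ and $C$ do their work, and the near-perfect tiling step as sketched does not go through.

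First, the iterative extraction of copies of $K$ cannot tile all but $O(1)$ vertices under a codegree of the form $\frac{a_1}{m}n + o(n)$. After removing $t$ copies of $K$, the residual $H_t$ on $n_t = n - |A| - tm$ vertices satisfies only $\delta_{k-1}(H_t)\ge \delta_{k-1}(H) - (n-n_t)$, which becomes \emph{negative} once $n_t < \frac{m-a_1}{m}n - o(n)$. So iterative extraction stalls once roughly $\frac{a_1}{m}n$ vertices have been removed, which leaves a residual of size $\Theta(n)$, not $O(1)$ as your plan requires. Getting ``through the bottleneck'' at $n_t\approx \frac{m-a_1}{m}n$ and continuing to a constant-sized residual is not $O(1)$ further rounds; it is $\Theta(n)$ rounds during which the codegree hypothesis gives you nothing. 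The paper instead obtains an almost-perfect tiling via the weak hypergraph regularity lemma and fractional $\mathrm{hom}(K')$-tilings (Lemma~\ref{lemK}), an approach that exploits density globally and has no analogue to a round-by-round iteration.

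Second, and more fundamentally, the terms $f(n)$ and $C$ do \emph{not} appear in the absorption/near-perfect tiling argument at all in the correct proof; they are only used in the \emph{extremal case}, which your proposal omits entirely. When $H$ is not extremal, the codegree $\frac{a_1}{m}n + o(n)$ already suffices for both the absorbing set and the almost-perfect tiling. The Tur\'an term $f(n)$ enters (via~\eqref{eq:ex} and Fact~\ref{lemEX}) only when $H$ is $\xi$-extremal, with a near-bipartition $V = A'\cup B'$ where $|A'| \approx \frac{a_1}{m}n$: there one must extract a few copies of $K$ \emph{entirely inside} the sparse side $B'$ to correct a size discrepancy $q = |B'| - \frac{m-a_1}{m}n$, and this is exactly where $\delta_{k-1}(H[B'])\ge q + C + f(n)$ and $\ex(\cdot,K)$ are invoked. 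Likewise, the Frobenius constant $C$ is used there to express the residual discrepancy as a nonnegative integer combination of $a_2-a_1,\dots,a_k-a_1$ and choose copies of $K$ of the right ``shape'' across the bipartition (the tiling $\K_3$ in the paper's proof), not to ``flatten leftover arithmetic'' in the absorbing step. The absorbing lemma does use $\gcd(K)=1$, but to show that all transferral vectors lie in the lattice generated by robust $K$-vectors, which is a different mechanism from the one you describe. Without an explicit extremal-case argument your approach cannot reach the claimed bound, since the codegree condition is essentially tight only in the extremal configuration.
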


A classical result of Erd\H{o}s \cite{erdos} states that given integers $k\ge 2$ and $1\le a_1\le \cdots \le a_k$, there exists $c$ such that for all sufficiently large $n$,
\begin{equation}\label{eq:erdos}
{\rm ex}(n, K^{(k)}(a_1,\dots, a_k)) \le c n^{k - 1/a_1 \cdots a_{k-1}}.
\end{equation}
This implies that $f(n)$ in Theorem~\ref{thm:main} is at most $O(n^{1-1/a_1 \cdots a_{k-1}})$, which is smaller than the error term $o(n)$ in~\eqref{eq:nk1}. 
Due to Proposition~\ref{disprove} (2), the term $f(n)$ in Theorem~\ref{thm:main} would be asymptotically tight if $a_1 = 1$ and ${\rm coex}(n, K) = (1- o(1)) \text{ ex}(n, K) k/ \binom{n}{k-1}$ (i.e., the extremal $k$-graph of $K$ is almost regular in terms of codegree).
%there is a $K$-free $k$-graph $G$ on $n$ vertices with $\delta_{k-1}(G)\ge (1- o(1)) \text{ ex}(n, K)/ \binom{n}{k-1}$ (thus $e(G)\ge (1- o(1)) \text{ ex}(n, K)$ and $G$ is almost regular in terms of codegree). % YZ added a_1=1.
%when ${\rm ex}(n, K)$ is (asymptotically) known and
%In view of Construction~\ref{cons:lb} (see Section 2), an immediate corollary of  is that if $a_1=\gcd(K)=1$ and there exists an extremal $k$-graph for ${\rm ex}(n, K)$ which is almost $d$-codegree-regular (i.e., any $(k-1)$-set has degree $(1+o(1))d$), then $\delta_{k-1}(n, K) = a_1 n/m + (1+o(1))d$.
Mubayi~\cite{Mubayi2002} determined ex$(n, K^{(k)}(1,\dots,1,2,t))$ asymptotically for all $t\ge 2$. 
Since the extremal $k$-graphs in this case is almost regular in terms of codegree, we obtain sharpened value of $\delta(n, K^{(k)}(1,\dots,1,2,t))$.
Moreover, Mubayi~\cite{Mubayi2002} also determined the order of magnitude of ex$(n, K^{(k)}(1,\dots,1,s,t))$ for $s\ge 3$ and $t\ge (s-1)!+1$.
%\footnote{Mubayi also determined ex$(n, K^{(k)}(1,\dots,1,3,3))$ asymptotically but we cannot apply it here because $\gcd(K^{(k)}(1,\dots,1,3,3))=2$.} 
This gives the correct order of magnitude of the second term of $\delta(n, K^{(k)}(1,\dots,1,s,t))$ for $s\ge 3$ and $t\ge (s-1)!+1$ such that $\gcd(s-1, t-s)=1$.

\begin{corollary}\label{thm:threshold}
Let $k\ge 3$. 
\begin{enumerate}[$(1)$]
\item For any $t\ge 2$,
\[
\delta(n, K^{(k)}(1,\dots,1,2,t))= \frac{n}{k+t} + (1+o(1))\sqrt{\frac{(t-1)(k+t-1)}{k+t}n};
\]

\item For any $s\ge 3$ and $t\ge (s-1)!+1$ such that $\gcd(s-1, t-s)=1$,
\[
\delta(n, K^{(k)}(1,\dots,1,s,t))= \frac{n}{k+s+t-2} + \Theta (n^{1-1/s}).
\]
\end{enumerate}
\end{corollary}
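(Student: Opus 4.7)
The plan is to derive both parts by squeezing $\delta(n,K)$ between the upper bound supplied by Theorem~\ref{thm:main} and the lower bound supplied by Proposition~\ref{disprove}(2), and to show that the two sides agree asymptotically by inserting Mubayi's Turán-number asymptotics for $K^{(k)}(1,\dots,1,2,t)$ and $K^{(k)}(1,\dots,1,s,t)$. The bridge between the $\text{ex}$-side (upper bound via $f(n)$) and the $\coex$-side (lower bound) is the near codegree-regularity of Mubayi's extremal hypergraphs, which makes $\coex(n,K) = (1-o(1))\,k\,\text{ex}(n,K)/\binom{n}{k-1}$.

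First I would verify that Theorem~\ref{thm:main} is applicable, i.e.\ that $\gcd(K)=1$ in both parts. In Part~(1) the set $\mathcal{D}(K)$ contains $\{1,t-1,t-2\}$, so $\gcd(K)=1$. In Part~(2) it contains $\{s-1,t-1,t-s\}$; since $t-1=(s-1)+(t-s)$ and $\gcd(s-1,t-s)=1$ by hypothesis, again $\gcd(K)=1$. The constant $C=g(a_2-a_1,\dots,a_k-a_1)+1$ in~\eqref{eq:deg} depends only on $K$, so it disappears into the stated error term.

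Next I would plug Mubayi's asymptotics into $f(n)$. For Part~(1), Mubayi's extension of the K\H ov\'ari--S\'os--Tur\'an theorem gives
\[
\ex\bigl(n,K^{(k)}(1,\dots,1,2,t)\bigr)=(1+o(1))\frac{\sqrt{t-1}}{k!}\,n^{k-1/2}.
\]
Setting $N=\tfrac{m-1}{m}n=\tfrac{k+t-1}{k+t}n$ and using $\binom{N}{k-1}\sim N^{k-1}/(k-1)!$, a direct calculation gives $f(n)=(1+o(1))\sqrt{(t-1)(k+t-1)/(k+t)}\,\sqrt{n}$, matching the stated formula. For Part~(2), Mubayi's bound $\ex(n,K^{(k)}(1,\dots,1,s,t))=\Theta(n^{k-1/s})$ yields $f(n)=\Theta(n^{1-1/s})$ by the same computation. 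In both cases Theorem~\ref{thm:main} delivers the desired upper bound on $\delta(n,K)$.

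For the matching lower bound I would invoke Proposition~\ref{disprove}(2), which (since $a_1=1$ in both parts) gives $\delta(n,K)\ge n/m+\coex(\tfrac{m-1}{m}n+1,K)$. Mubayi's extremal constructions are obtained by algebraic/projective methods with a large automorphism group, hence their codegrees are concentrated at the average value; this forces $\coex(N,K)=(1-o(1))\,k\,\ex(N,K)/\binom{N}{k-1}$, so the lower bound matches the $f(n)$ expression computed above. The only delicate point, and the main obstacle, is to justify this near-regularity carefully enough for Part~(1), where the sharp leading constant $(1+o(1))\sqrt{(t-1)(k+t-1)/(k+t)}$ must be recovered; the argument requires that the minimum and the average codegree of Mubayi's construction agree up to $o(\sqrt{n})$. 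Part~(2) is easier because only the rate $\Theta(n^{1-1/s})$ needs to be reproduced, and any construction attaining $\ex(n,K)=\Theta(n^{k-1/s})$ with roughly uniform codegree suffices.
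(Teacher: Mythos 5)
Your proposal is correct and follows essentially the same route as the paper: upper bound from Theorem~\ref{thm:main} via the Tur\'an asymptotics of Propositions~\ref{prop:lb} and~\ref{prop:lb2}, lower bound from Proposition~\ref{disprove}(2) via the codegree Tur\'an asymptotics in the same propositions. The ``delicate point'' you flag (near-regularity of Mubayi's extremal construction, so that $\coex$ matches $k\,\ex/\binom{n}{k-1}$) is exactly what the paper packages into Propositions~\ref{prop:lb} and~\ref{prop:lb2} via the random-deletion argument preceding them, so it poses no obstacle.
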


\medskip

If $K:=K^{(k)}(a_1, \dots, a_k)$ satisfies $\gcd(K)=1$ and $a_{k-1}=1$, then $a_k=2$ and consequently, $K= K^{(k)}(1,\dots, 1, 2)$. In this case ${\rm ex}(n, K) \le \binom{n}{k-1}/k$ because in a $K$-free $k$-graph, every $(k-1)$-set has degree at most $1$. Moreover, $C=g(0,\dots, 0,1)+1 =0$ in this case. Theorem~\ref{thm:main} thus gives that $\delta(n, K)\le n/(k+1)+1$. By a more careful analysis on the proof of Theorem~\ref{thm:main}, we are able to determine $\delta(n, K^{(k)}(1,\dots, 1,2))$ exactly (for sufficiently large $n$).

\begin{theorem}\label{thm:112}
Given $k\ge 3$, let $n\in (k+1)\mathbb{Z}$ be sufficiently large. Then
\begin{equation*}
\delta(n, K^{(k)}(1,\dots, 1,2))=\begin{cases}
 \frac{n}{k+1}+1 &\text{ if } k-i \mid\binom{n'-i}{k-1-i} \text{ for all } 0\le i\le k-2; \\
 \frac{n}{k+1} &\text{ otherwise},
\end{cases}
\end{equation*}
where $n' = \frac{k n}{k+1}+1$.
\end{theorem}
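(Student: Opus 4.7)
The plan combines three ingredients: a lower bound from the space barrier together with Keevash's existence theorem for designs; the upper bound $\delta(n,K)\le n/(k+1)+1$ that follows directly from Theorem~\ref{thm:main}; and a sharpened upper bound $\delta(n,K)\le n/(k+1)$ when the divisibility conditions fail at $n'=kn/(k+1)+1$.

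For the \emph{lower bound}, Construction~\ref{cons:lb0} always gives $\delta(n,K)\ge n/(k+1)$. When the divisibility conditions hold at $n'$, Keevash's theorem on the existence of combinatorial designs produces a Steiner system $S(k-1,k,n')$ for large $n'$, i.e., an $n'$-vertex $K$-free $k$-graph in which every $(k-1)$-subset has degree exactly $1$ (a $(k-1)$-subset of degree $\ge 2$ would extend to a copy of $K$). Hence $\coex(n',K)\ge 1$, and Proposition~\ref{disprove}(2) yields $\delta(n,K)\ge n/(k+1)+1$. The upper bound $\delta(n,K)\le n/(k+1)+1$ is immediate from Theorem~\ref{thm:main}: $\ex(N,K)\le \binom{N}{k-1}/k$ gives $f(n)\le 1$, and $C=g(0,\ldots,0,1)+1=0$.

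The main new work is the sharper upper bound $\delta(n,K)\le n/(k+1)$ when divisibility fails at $n'$. Let $H$ have $\delta_{k-1}(H)\ge n/(k+1)$. Fix a small $\epsilon>0$ and split. In the \emph{non-extremal case}, $H$ admits no partition $V=A\cup B$ with $|A|=(1\pm\epsilon)n/(k+1)$ and $|E(H[B])|\le\epsilon n^k$; the absorbing/almost-perfect-tiling framework from Theorem~\ref{thm:main} then succeeds with $\delta_{k-1}(H)\ge n/(k+1)$ alone, since the $+1$ slack in~\eqref{eq:deg} (used originally to construct the absorbing set) can be replaced by the $\epsilon n$-slack supplied by non-extremality. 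In the \emph{extremal case}, after a short cleanup (moving a bounded number of vertices between $A$ and $B$) we may assume $|A|=n/(k+1)-1$, so $|B|=n'$. Every $(k-1)$-subset $S\subset B$ then satisfies $\deg_{H[B]}(S)\ge \delta_{k-1}(H)-|A|\ge 1$. If $H[B]$ were $K$-free, then $\deg_{H[B]}(S)\le 1$ for every such $S$ (larger degree would extend to a copy of $K$ inside $B$), forcing $\deg_{H[B]}(S)=1$ throughout and exhibiting $H[B]$ as a Steiner system $S(k-1,k,n')$ — contradicting the assumed failure of divisibility. Therefore $H[B]$ contains a copy $T$ of $K$. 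Using $T$ as one tile reduces to tiling $A\cup(B\setminus T)$, where $|A|=|B\setminus T|/k$, so each remaining tile must use exactly one vertex of $A$. The remaining $|A|$ tiles are then constructed via a perfect matching in the auxiliary bipartite hypergraph whose edges are pairs $(a,S)$ with $a\in A$, $S\in\binom{B\setminus T}{k}$, and $\{a\}\cup S$ forming a copy of $K$.

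The main obstacle lies in the extremal case, especially the cleanup that normalizes $|A|$ to $n/(k+1)-1$ and the verification of a Hall-type condition for the final perfect matching, both of which must proceed under the tight codegree hypothesis $\delta_{k-1}(H)\ge n/(k+1)$ with essentially no slack. The divisibility failure is exploited exactly once, to yield the pivotal copy $T\subset B$ that breaks the arithmetic obstruction and makes the remaining matching problem feasible.
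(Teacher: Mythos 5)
Your lower bound and the $n/(k+1)+1$ upper bound match the paper's proof exactly (Construction~\ref{cons:lb0}, Keevash's design theorem together with Proposition~\ref{disprove}(2), and Theorem~\ref{thm:main} with $C=0$, $f(n)\le 1$). Your non-extremal case is also sound, though the explanation is slightly off: the absorbing lemma (Lemma~\ref{lem:absorb}) only needs $\delta_{k-1}(H)\ge\rho n$ for \emph{some} constant $\rho>0$, so the ``$+1$'' in \eqref{eq:deg} was never needed for absorption in the first place — that slack was only used in the extremal case. And your central insight — that divisibility failure at $n'$ forces $H[B']$ to contain a copy of $K^{(k)}(1,\dots,1,2)$, because a $K$-free $H[B']$ with every $(k-1)$-set of positive degree would be a Steiner system $S(k-1,k,n')$ — is precisely the paper's pivotal observation in the proof of Theorem~\ref{lemE3}.

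The gap is in the structure of the extremal case. You cannot simply ``move a bounded number of vertices between $A$ and $B$ and assume $|A|=n/(k+1)-1$.'' The cleanup that the paper performs (defining $A'$, $B'$, $V_0$ by degree thresholds) produces sets whose sizes are only approximately right, with a \emph{discrepancy} $q=|B'|-\lfloor(1-\sigma)n\rfloor$ that can be any integer of magnitude $O(\xi^{2/3}n)$, not something you control to be exactly $1$. The paper's proof of Theorem~\ref{lemE3} (via Theorem~\ref{lemE}) handles all values of $q$: for $q\ge 2$ it finds $q$ copies of $K$ in $B'$ via Fact~\ref{lemEX} using $\mathrm{ex}(N,K)\le\binom{N}{k-1}/k$, for $q\le 0$ it needs none, and the divisibility argument is invoked \emph{only} for the boundary case $q=1$. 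Moreover, even after removing a copy $T\subseteq B'$ and covering $V_0$, the final tiling where each copy uses exactly one vertex of $A'$ is not a simple Hall-type bipartite matching: the paper needs Lemma~\ref{lem3}, which itself requires a random sparsification argument and the Lu--Sz\'ekely packing theorem, plus the intermediate tilings $\K_2$ (to absorb $V_0$) and $\K_3$ (to zero out the residual discrepancy). Your proposal names these as ``the main obstacle'' but does not supply them, so as written the extremal-case argument is incomplete.
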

A \emph{Steiner system} $S(t, k, n)$ is an $n$-vertex $k$-uniform hypergraph in which every set of $t$ vertices has degree exactly $1$. The divisibility conditions in Theorem~\ref{thm:112} are necessary for the existence of $S(k-1, k, n')$. 
Our proof of Theorem~\ref{thm:112} applies a recent breakthrough of Keevash~\cite{Keevash_design}, who showed that these divisibility conditions are also sufficient for the existence of a Steiner system $S(k-1, k, n')$ for sufficiently large $n'$.

When $k=3$, the divisibility conditions in Theorem~\ref{thm:112} reduce to $8\mid n$. Since $K^{(3)}(1, 1, 2)= C_2^3$, Theorem~\ref{thm:112} gives the aforementioned result of Czygrinow, DeBiasio and Nagle \cite{CDN}.
When $k$ is even, the divisibility conditions in Theorem~\ref{thm:112} always fail and consequently, $\delta(n, K)= n/(k+1)$.
To see this, letting $i=k-2$, we have $k-i=2$ and $\binom{n'-i}{k-1-i} = n' - k+2 = \frac{k n}{k+1} - k + 3$.
When $k$ is even, $\frac{k n}{k+1} - k + 3$ is odd and thus $k-i\nmid \binom{n'-i}{k-1-i}$.

\medskip
Our last result is on tiling loose cycles.
For $k\ge 3$ and  $s>1$, a loose cycle of length $s$, denoted $C_s^k$, is a $k$-graph with $s(k-1)$ vertices $1,\dots, s(k-1)$ and $s$ edges $\{j(k-1)+1,\dots, j(k-1)+k\}$ for $0\le j <s$, where we regard $s(k-1)+1$ as $1$. 
%For example, $C^3_2= K(1, 1, 2)$. 
It is easy to see that $\gcd(C_s^k)=1$ unless $s=k=3$ (see Proposition~\ref{propcsk}). 
R\"odl and Ruci\'nski~\cite[Problem 3.15]{RR} asked for the value of $\delta(n, C_s^3)$.
Mycroft~\cite{My14} determined $\delta(n, C_s^k)$ asymptotically for all $s\ge 2$ and $k\ge 3$.
Recently, Gao and Han~\cite{GaHa} show that $\delta(n, C_3^3)=n/6$ and independently Czygrinow~\cite{Czy_cycle} determined $\delta(n, C_s^3)$ for all $s\ge 3$.
By modifying the proof of Theorem~\ref{thm:main}, we determine the exact value of $\delta(n, C_s^k)$ for $k\ge 4$ and $s\ge 2$.
%To simplify some notation and arguments, we assume that $k\ge 4$ and $s\ge 2$ though our proof naively extends to the case $k=3$ and $s\ge 4$. The only different case is when $k=s=3$, which we have addressed in~\cite{GaHa} separately (indeed, the authors of \cite{My14} and~\cite{Czy_cycle} also addressed this special case separately).

\begin{theorem}\label{thm:cyc_pac}
Given $k\ge 4$ and $s\ge 2$, let $n\in s(k-1)\mathbb{N}$ be sufficiently large.
Suppose $H$ is an $n$-vertex $k$-graph such that $\delta_{k-1}(H)\ge \frac{\lceil s/2 \rceil}{s(k-1)}n$.
Then $H$ contains a $C_s^k$-factor.
\end{theorem}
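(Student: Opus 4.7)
The plan is to adapt the absorbing-method framework from the proof of Theorem~\ref{thm:main} to the setting of loose cycles, where no Tur\'an-type additive correction arises and the sharp threshold matches the space-barrier bound exactly. First I would verify the structural parameters of $C_s^k$: by examining its $k$-partite realizations---color the $s$ vertices shared between consecutive edges by a proper cyclic $k$-coloring, then assign each edge's remaining $k-2$ interior vertices the other colors---one obtains $\sigma(C_s^k) = \lceil s/2\rceil/(s(k-1))$ and $\gcd(C_s^k) = 1$ whenever $(s,k)\neq(3,3)$, as asserted in Proposition~\ref{propcsk}. Combined with the matching space-barrier lower bound (taking $|V_1| < \lceil s/2\rceil n/(s(k-1))$ and observing that the minimum vertex cover of $C_s^k$ equals $\lceil s/2\rceil$, so every copy of $C_s^k$ must use at least $\lceil s/2\rceil$ vertices of $V_1$), this places us in the second line of~\eqref{eq:nk1} and confirms tightness.

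Following the skeleton of Theorem~\ref{thm:main}, I would establish two main ingredients. The first is an absorbing lemma: under the codegree hypothesis there exists $A\subseteq V(H)$ with $|A|=o(n)$ such that for every $W\subseteq V(H)\setminus A$ with $|W|$ small and divisible by $s(k-1)$, the induced subgraph $H[A\cup W]$ admits a perfect $C_s^k$-tiling. Since $\gcd(C_s^k)=1$, absorbers for vertices in distinct realization-classes can be combined in the standard Lo--Markstr\"om manner to correct arbitrary small balanced defects. The second is a near-perfect tiling lemma producing a $C_s^k$-packing of $V(H)\setminus A$ that misses only $o(n)$ vertices, obtained from a fractional-to-integer conversion essentially as in~\cite{My14} applied in the current codegree regime.

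The main obstacle is tightening the combination of these ingredients from $o(n)$-slack to zero, which is what distinguishes the exact result from Mycroft's asymptotic one. I would proceed by a stability dichotomy relative to the space-barrier extremal configuration. In the non-extremal case, where $H$ is quantitatively far from the space barrier, a robust version of the near-perfect tiling lemma---whose covered vertex count improves once $H$ avoids the extremal structure---combines cleanly with the absorber at exactly the stated codegree level. In the extremal case, where $H$ admits an approximate bipartition $V(H)=V_1\cup V_2$ with $|V_1|$ close to $\sigma(C_s^k)n$, I would argue directly: since the minimum vertex cover of $C_s^k$ is $\lceil s/2\rceil$, every copy in a $C_s^k$-factor must use \emph{exactly} $\lceil s/2\rceil$ vertices from $V_1$ and $s(k-1)-\lceil s/2\rceil$ from $V_2$, and the codegree hypothesis is precisely sharp enough to guarantee that every $(k-1)$-set in $V_2$ has a neighbor in $V_1$, enabling a greedy construction cycle by cycle. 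The delicate part is divisibility bookkeeping: $|V_1|$ need not be an exact multiple of $\lceil s/2\rceil$, and the codegree surplus together with the absorber must be deployed to rebalance the partition before the greedy packing is completed. I expect this extremal-structural analysis---and in particular handling the odd-$s$ parity where one cycle must accommodate an ``extra'' interior vertex of $V_1$---to be the main technical challenge.
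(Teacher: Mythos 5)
Your overall architecture---absorbing lemma, almost-perfect tiling with a stability output, and a direct analysis of the extremal case---matches the paper's scheme, and your computation of $\sigma(C_s^k)$ and $\gcd(C_s^k)=1$ is correct (cf.\ Proposition~\ref{propcsk}). But the crux of the exact theorem is the extremal case (Theorem~\ref{lemE2}), and your sketch of it contains a genuine gap.

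You assert that in the extremal case every copy in a $C_s^k$-factor must use \emph{exactly} $\lceil s/2\rceil$ vertices from $V_1$. This is only true in the idealized balanced situation. In the actual extremal case the classes are only approximately sized, and after cleaning up the partition (passing to $A'$, $B'$, $V_0$) one must cope with a signed discrepancy $q=|B'|-|B|$. When $q>0$ the partition has too few $A'$-vertices, and you genuinely must place some cycles with \emph{fewer} than $\lceil s/2\rceil$ vertices in $A'$; a ``greedy construction cycle by cycle'' using exactly $\lceil s/2\rceil$ vertices in $V_1$ per cycle cannot close the books. There is also no ``codegree surplus'' to draw on --- the stated threshold is exactly $\sigma(C_s^k)n$. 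The paper's mechanism is specific and essential here: the discrepancy $q>0$ forces $\delta_{k-1}(H[B'])\ge q$, and one uses the small Tur\'an numbers ${\rm ex}(n,P_2^k)$ and ${\rm ex}(n,C_2^k)$ (Theorem~\ref{thm:FF1}) to extract $q$ disjoint loose paths $P_2^k$ (or loose cycles $C_2^k$ when $s=2$) inside $B'$, then closes each $P_2^k$ into a $C_s^k$ routed through $A'$ so that the resulting cycle has only $\lceil s/2\rceil-1$ vertices in $A'$; this is how the discrepancy is reduced without any codegree surplus, and it exploits the fact that for $C_s^k$ the Frobenius constant $C$ is zero. After the discrepancy is corrected, the remaining ``ideal'' tiling is not found greedily but via a matching argument (Lemma~\ref{lem3}, using Lu--Sz\'ekely). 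Your proposal does not contain this construction, and it is precisely what separates the exact result from the $o(n)$ version.

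One further point: you reference Mycroft~\cite{My14} for the fractional-to-integer near-perfect tiling, but that approach does not emit the stability clause ``or $H$ is $\xi$-extremal'' that your dichotomy needs. You would instead need something like the paper's Lemma~\ref{lemK}, proved via weak hypergraph regularity and fractional hom-tilings, which explicitly outputs either an almost-perfect tiling or extremality.
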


Construction~\ref{cons:lb0} shows that the codegree condition in Theorem~\ref{thm:cyc_pac} is sharp.

The rest of the paper is organized as follows. % YZ removed give Construction~\ref{cons:lb} and
We prove Proposition~\ref{disprove} and Corollary~\ref{thm:threshold} in Section~2. 
Next we discuss proof ideas and give auxiliary lemmas and use them to prove Theorems~\ref{thm:main},~\ref{thm:112}, and~\ref{thm:cyc_pac} in Section~3. 
We prove the auxiliary lemmas in Sections 4--6.

\section{Proof of Theorem~\ref{thm:threshold}}

The following well-known construction is often called the \emph{space barrier} (for tiling problems).
Given a $k$-graph $F$, let $\tau(F)$ be the smallest size of a \emph{vertex cover} of $F$, namely, a set that meets each edge of $F$.
Trivially $\tau(K^{(k)}(a_1, \dots, a_k)) = a_1$. We also have $\tau(C_s^k) \ge \lceil s/2 \rceil$ because $C_s^k$ has $s$ edges and every vertex of $C_s^k$ has degree at most two.\footnote{We also know $\tau(C_s^k) \le \lceil s/2 \rceil$ from Proposition~\ref{propcsk}.} % YZ revised the footnote.

\begin{construction}\label{cons:lb0}
Fix a $k$-graph $F$ of $m$ vertices. Let $H_0=(V, E)$ be an $n$-vertex $k$-graph such that $V=A\cup B$ with $|A| = \tau(F) n/m - 1$ and $|B|=n-|A|$, and $E$ consists of all $k$-sets that intersect $A$. We have $\delta_{k-1}(H_0) = |A| = \tau(F) n/m - 1$.
\end{construction}
Since each copy of $F$ in $H_0$ contains at least $\tau(F)$ vertices in $A$, $H_0$ does not contain a perfect $F$-tiling.
We slightly strengthen Construction~\ref{cons:lb0} as follows. 

\begin{construction}\label{cons:lb}
Let $K:=K^{(k)}(a_1, \dots, a_k)$ such that $a_1\le \cdots \le a_k$ and $m=a_1+\cdots + a_k$.
Let $H_1=(V, E)$ be an $n$-vertex $k$-graph as follows. Let $V=A\cup B$ such that $|A| = a_1 n/m - 1$ and $|B|=n-|A|$.
Let $G$ be a $k$-graph on $B$ which is $K^{(k)}(b_1, b_2, \dots, b_k)$-free for all $1\le b_1\le \dots \le b_k$ such that $\sum_{i\in [k]} b_i = m - a_1+1$ and $b_i\le a_i$ for $i\in [k]$. 
Let $E$ be the union of $E(G)$ and the set of all $k$-tuples that intersect $A$, and thus $\delta_{k-1}(H_0) = |A| + \delta_{k-1}(G) =a_1 n/m + \delta_{k-1}(G) - 1$. 
\end{construction}

In Construction~\ref{cons:lb}, no $m$-set with at most $a_1-1$ vertices in $A$ spans a copy of $K$. 
Therefore each copy of $K$ in $H_0$ contains at least $a_1$ vertices in $A$ and consequently, $H_0$ does not contain a perfect $K$-tiling.

Now we give a construction of Mubayi~\cite{Mubayi2002}. 
Given $t\ge 2$, suppose that $q$ is a prime number such that $q\equiv 1 \bmod t-1$. Let $n_0=(q-1)^2/(t-1)$.
Let $\mathbf{F}$ be the $q$-element finite field, and let $S$ be a (multiplicative) subgroup of $\mathbf{F}\setminus \{0\}$ of order $t-1$.
We define a $k$-graph $G_0$ whose vertex set consists of all equivalence classes in $(\mathbf{F}\setminus \{0\})\times (\mathbf{F}\setminus \{0\})$, where $(a, b)\sim (x,y)$ if there exists $s\in S$ such that $a=s x$ and $b=s y$.
The class represented by $(a,b)$ is denoted by $\langle a, b \rangle$.
A set of $k$ distinct classes $\langle a_i, b_i \rangle$ $(1\le i\le k)$ forms an edge in $G_0$ if
\[
\prod_{i=1}^k a_i + \prod_{i=1}^k b_i \in S.
\]
It is easily observed that this relation is well-defined, and $\delta_{k-1}(G_0)\ge q-k$.
Moreover, as shown in~\cite{Mubayi2002}, $G_0$ is $K^{(k)}(1,\dots,1,2,t)$-free.

To extend this construction, we use the fact that for any $\e>0$ and sufficiently large $n$, there exists a prime $q$ such that $q\equiv 1 \bmod t-1$ and $n\le (q-1)^2/(t-1) \le (1+\e/3)n$ (see \cite{HuIw}). 
Let $G_0$ be the $k$-graph on $(q-1)^2/(t-1)$ vertices defined above.
To obtain a $K^{(k)}(1,\dots,1,2,t)$-free $k$-graph $G$ on $n$ vertices, we delete a random set $T$ of order $(q-1)^2/(t-1) - n$ from $G_0$ and let $G:=G_0\setminus T$.
Since the expected value of the codegree survived is at least $(q-k)/(1+\e/3)$, standard concentration results (e.g., Chernoff's bound) show that $\delta_{k-1}(G) \ge (1-\e)\sqrt{(t-1)n}$ with positive probability.
We summarize this construction together with the result on ${\rm ex}(n, K^{(k)}(1,\dots, 1,2,t))$ from~\cite{Mubayi2002} in the following proposition.

\begin{proposition}\cite{Mubayi2002}\label{prop:lb}
For any $t\ge 2$, we have $\coex(n, K^{(k)}(1,\dots,1,2,t))= (1+ o(1))\sqrt{(t-1) n}$,
%Given $t\ge 2$ and $\e>0$ there exists $n_0\in \mathbb{N}$ such that for all $n\ge n_0$, there exists an $n$-vertex $k$-graph $G$ which is $K^{(k)}(1,\dots,1,2,t)$-free and $\delta_{k-1}(G) = (1-\e)\sqrt{(t-1) n}$.
and ${\rm ex}(n, K^{(k)}(1,\dots, 1,2,t)) = (1+o(1))\frac{\sqrt{t-1}}{k!} n^{k-1/2}$.
\end{proposition}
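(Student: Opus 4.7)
The plan is to establish each of the two asymptotic identities by combining the explicit algebraic construction $G_0$ described immediately before the statement (which yields the lower bounds) with a K\H{o}v\'ari--S\'os--Tur\'an type double-counting argument (which yields the upper bound on $\ex$); the upper bound on $\coex$ then follows from the general inequality $\coex(n,F)\binom{n}{k-1}\le k\,\ex(n,F)$ noted earlier in the paper.

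The central step is verifying that $G_0$ is $K^{(k)}(1,\dots,1,2,t)$-free. Assume toward a contradiction that a copy exists, with singleton classes $\langle x_j,y_j\rangle$ for $j\in[k-2]$, a pair-class $\{\langle U_\ell,V_\ell\rangle\}_{\ell=1,2}$, and a $t$-element class $\{\langle a_i,b_i\rangle\}_{i=1}^t$. Set $P=\prod_j x_j$ and $Q=\prod_j y_j$; each $\langle a_i,b_i\rangle$ must simultaneously satisfy $PU_\ell a_i+QV_\ell b_i\in S$ for $\ell=1,2$. When the corresponding $2\times 2$ linear system is non-singular, for each $(s_1,s_2)\in S\times S$ the system has a unique solution $(a,b)\in\mathbf{F}^2$, and these $(t-1)^2$ solutions fall into exactly $t-1$ equivalence classes, since the diagonal $S$-action $(s_1,s_2)\mapsto(ss_1,ss_2)$ is free with $|S|=t-1$ orbits. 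When the system is singular but the pair-classes $\langle U_1,V_1\rangle\neq\langle U_2,V_2\rangle$ are distinct, one has $(U_1,V_1)=s(U_2,V_2)$ for some $s\notin S$; any common solution would force $s\in S\cdot S^{-1}=S$, a contradiction, so the common solution set is empty. In either case at most $t-1$ equivalence classes qualify, contradicting the presence of $t$ distinct ones.

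With $K$-freeness in hand, I would count edges and codegrees in $G_0$. For any $(k-1)$ distinct classes $\{\langle a_i,b_i\rangle\}$, the remaining classes $\langle x,y\rangle$ forming an edge are governed by $x\prod a_i+y\prod b_i\in S$, giving $(q-1)(t-1)$ solutions in $(\mathbf{F}\setminus\{0\})^2$ and thus $q-1$ equivalence classes; excluding the $k-1$ given classes yields $\delta_{k-1}(G_0)\ge q-k$ and $e(G_0)\ge (1-o(1))\frac{\sqrt{t-1}}{k!}n^{k-1/2}$. A random deletion of the $(q-1)^2/(t-1)-n$ excess vertices combined with Chernoff's inequality applied to every $(k-1)$-subset preserves the minimum codegree up to a $(1-\epsilon)$ factor, producing an $n$-vertex $K$-free graph $G$ with $\delta_{k-1}(G)\ge (1-\epsilon)\sqrt{(t-1)n}$. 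For the matching upper bound on $\ex$, a K\H{o}v\'ari--S\'os--Tur\'an style double count of ordered tuples $(T,u_1,u_2,w_1,\dots,w_t)$ with $T$ a $(k-2)$-subset and every $T\cup\{u_\ell,w_i\}$ an edge of $H$, combined with Jensen's inequality applied to codegrees of $(k-1)$-subsets $T\cup\{u_\ell\}$, yields $e(H)\le(1+o(1))\frac{\sqrt{t-1}}{k!}n^{k-1/2}$; the upper bound on $\coex$ follows immediately.

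The main obstacle is the algebraic $K$-freeness verification, particularly the degenerate case of the $2\times 2$ system, where distinct pair-classes can correspond to proportional rows whose ratio lies outside $S$; the contradiction here uses crucially that $S$ is a multiplicative subgroup, not merely a set. The remaining steps---Chernoff concentration under random deletion and the K\H{o}v\'ari--S\'os--Tur\'an style counting---are routine once the construction is shown to be $K$-free.
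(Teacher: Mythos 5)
Your proof is correct and follows the same route as the paper: Mubayi's construction $G_0$ plus random deletion supply the lower bounds, and the general inequality $\coex(n,F)\binom{n}{k-1}/k\le\ex(n,F)$ transfers a K\H{o}v\'ari--S\'os--Tur\'an upper bound on $\ex$ to $\coex$. The difference is one of detail rather than method: the paper cites Mubayi for the $K^{(k)}(1,\dots,1,2,t)$-freeness of $G_0$ and for the Tur\'an-number asymptotics, whereas you supply correct proofs of both (your treatment of the singular $2\times 2$ case, using that $S$ is a subgroup so that proportionality of the rows would force the ratio into $S$, is exactly the point that makes the freeness argument go through); the only slip is harmless arithmetic --- each line $Ax+By=s$ with $s\in S$ meets $(\mathbf{F}\setminus\{0\})^2$ in $q-2$ rather than $q-1$ points, so one gets $(q-2)(t-1)$ solutions and $q-2$ equivalence classes, which still yields $\delta_{k-1}(G_0)=(1-o(1))\sqrt{(t-1)n_0}$ and the claimed edge count.
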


For integers $s\ge 3$ and $t\ge (s-1)!+1$, a more involved construction in~\cite{Mubayi2002} shows there exists a $K^{(k)}(1,\dots,1,s,t)$-free $k$-graph of order $q^s - q^{s-1}$ for some prime number $q$ with the desired minimum codegree.
We omit the detail of this construction and note that the construction can be extended to all sufficiently large $n$ as above.

\begin{proposition}\cite{Mubayi2002}\label{prop:lb2}
Given $s\ge 3$ and $t\ge (s-1)!+1$, we have $\coex(n, K^{(k)}(1,\dots,1,s,t))= \Theta(n^{1-1/s})$, 
%Given $s\ge 3$ and $t\ge (s-1)!+1$, $\e>0$, there exists $n_0\in \mathbb{N}$ such that for all $n\ge n_0$, there exists an $n$-vertex $k$-graph $G$ which is $K^{(k)}(1,\dots,1,s,t)$-free and $\delta_{k-1}(G) = (1-\e) n^{1-1/s}$.
and ${\rm ex}(n, K^{(k)}(1,\dots, 1,s,t)) = \Theta( n^{k-1/s})$.
\end{proposition}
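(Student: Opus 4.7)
The plan is to derive the upper bounds from classical hypergraph Tur\'an results and to obtain the matching lower bounds by running Mubayi's finite field construction and then pushing it from the sparse set of admissible orders to all large $n$, just as was done for Proposition~\ref{prop:lb}.

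For the upper bound, I would apply the Erd\H{o}s bound~\eqref{eq:erdos} with $a_1 = \cdots = a_{k-2} = 1$, $a_{k-1} = s$, $a_k = t$; since $a_1 \cdots a_{k-1} = s$, this gives $\ex(n, K^{(k)}(1,\dots,1,s,t)) = O(n^{k-1/s})$. The averaging inequality $\coex(n, F) \binom{n}{k-1}/k \le \ex(n, F)$ recorded in the introduction then yields $\coex(n, K^{(k)}(1,\dots,1,s,t)) = O(n^{1-1/s})$.

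For the lower bound, I would mimic the proof of Proposition~\ref{prop:lb} but with a higher-dimensional algebraic core. Fix a prime $q$ in a suitable residue class and let the vertex set be the quotient $(\mathbf{F}_q^*)^s / S$, where $S\le \mathbf{F}_q^*$ is an appropriately chosen subgroup; this has $q^s - q^{s-1}$ equivalence classes (or a close variant thereof). A $k$-set of classes $\langle a_i^{(1)}, \dots, a_i^{(s)}\rangle$, $1\le i\le k$, is declared an edge when a specific symmetric polynomial in the representative coordinates evaluates into $S$, generalizing the relation $\prod a_i + \prod b_i \in S$ from the $s=2$ case. A direct count over $\mathbf{F}_q$ then shows that any fixed $(k-1)$ classes extend to $\Omega(q^{s-1}) = \Omega(n^{1-1/s})$ edges, delivering the desired codegree lower bound.

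The main obstacle will be verifying that this $k$-graph is $K^{(k)}(1,\dots,1,s,t)$-free, which is precisely where the hypothesis $t \ge (s-1)! + 1$ is used. The argument I would follow is algebraic: embedding a copy of $K^{(k)}(1,\dots,1,s,t)$ would force $s$ prescribed classes to share $t$ common ``neighbors'' satisfying a system of $s$ polynomial equations in $s$ unknowns over $\mathbf{F}_q$, and a B\'ezout-type bound caps the number of simultaneous solutions by $(s-1)!$, contradicting $t \ge (s-1)!+1$. Finally, to pass from the sparse sequence $n' = q^s - q^{s-1}$ to every sufficiently large $n$, I would pick $q$ with $n \le n' \le (1+o(1))n$ via primes in arithmetic progressions (as cited through \cite{HuIw}), delete a uniformly random set of $n' - n$ vertices, and apply Chernoff's inequality to deduce that $\delta_{k-1}$ drops by at most a $(1+o(1))$ factor with positive probability. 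This gives the stated lower bound on $\coex$; multiplying by $\binom{n}{k-1}/k$ and noting that the construction has $\Omega(n^{k-1/s})$ edges produces the matching bound on $\ex$.
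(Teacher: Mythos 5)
Your strategy matches the paper's: both treat Mubayi's $K^{(k)}(1,\dots,1,s,t)$-free hypergraph on $q^s-q^{s-1}$ vertices as a black box from~\cite{Mubayi2002}, extend it to all large $n$ via primes in arithmetic progressions plus random deletion and a Chernoff bound (exactly as done for Proposition~\ref{prop:lb}), and obtain the upper bounds from Erd\H{o}s's theorem~\eqref{eq:erdos} with $a_1\cdots a_{k-1}=s$ together with the averaging inequality $\coex(n,F)\binom{n}{k-1}/k\le\ex(n,F)$. One caveat: the internals of Mubayi's construction you sketch are not quite right --- the vertex count $q^s-q^{s-1}=q^{s-1}(q-1)$ arises from a norm-graph-style set (essentially $\mathbf{F}_{q^{s-1}}\times\mathbf{F}_q^*$), not from a quotient of $(\mathbf{F}_q^*)^s$ by a subgroup as you propose --- but since the paper itself omits these details and cites Mubayi outright, this does not affect the structural agreement between your argument and the paper's.
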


\begin{proof}[Proof of Proposition~\ref{disprove}]
Assume $K:=K^{(k)}(a_1, \dots, a_k)$ such that $a_1\le \cdots \le a_k$, $m=a_1+\cdots + a_k$ and $a_{k-1}\ge 2$.
We will show that for any choice of $b_1, b_2, \dots, b_k$ such that $\sum_{i\in [k]} b_i = m - a_1+1$ and $b_i\le a_i$ for $i\in [k]$, we have $b_{k-1}\ge 2$ (thus $K^{(k)}(b_1, b_2, \dots, b_k)$ contains $K^{(k)}(1, \dots, 1, 2,2)$ as a subgraph). 
Then Proposition~\ref{disprove} (1) follows from putting the $k$-graph $G$ given by Proposition~\ref{prop:lb} with $t=2$ into Construction~\ref{cons:lb}. 
To see why $b_{k-1}\ge 2$, first assume that $a_1=1$.
In this case $b_i=a_i$ for all $i\in [k]$. Since $a_{k-1}\ge 2$, we have $b_{k-1}\ge 2$.
Second assume that $a_1\ge 2$.
If $b_{k-1} =1$, then $b_1 = \dots = b_{k-1}=1$ and consequently, $\sum_{i\in [k]} a_i - \sum_{i\in [k]} b_i \ge (k-1)(a_1-1) > a_1 - 1$, a contradiction. Thus $b_{k-1}\ge 2$.

Proposition~\ref{disprove} (2) follows from Construction~\ref{cons:lb} immediately because $a_1 = 1$ implies that $a_i = b_i$ for $i\in [k]$. 
\end{proof}

\begin{proof}[Proof of Corollary~\ref{thm:threshold}]
The upper bounds in Corollary~\ref{thm:threshold} $(1)$ and $(2)$ follow from Theorem~\ref{thm:main} and the results on the Tur\'an numbers from Propositions~\ref{prop:lb} and~\ref{prop:lb2}.
The lower bounds follow from Proposition~\ref{disprove} (2) and the results on the codegree Tur\'an numbers from Propositions~\ref{prop:lb} and~\ref{prop:lb2}.
%Then putting the $k$-graph $G$ given by Proposition~\ref{prop:lb} into Construction~\ref{cons:lb} shows the lower bound in $(1)$; and putting the $k$-graph $G$ given by Proposition~\ref{prop:lb2} into Construction~\ref{cons:lb} shows the lower bound in $(2)$. 
\end{proof}

\section{Proof ideas and lemmas}

Mycroft's proofs~\cite{My14} use the newly developed Hypergraph Blow-up Lemma by Keevash~\cite{Keevash_blowup}.
Instead, our proofs include several new ingredients, which allow us to obtain a better bound by a much shorter proof.
First, to obtain \emph{exact} results, we separate the proof into a \emph{non-extremal} case and an \emph{extremal} case and deal with them separately.
The proof of the non-extremal case utilizes the \emph{lattice-based absorbing method} developed recently by the second author~\cite{Han14_poly}, which builds on the \emph{absorbing method} initiated by R\"odl, Ruci\'nski and Szemer\'edi~\cite{RRS06}.
In order to find an almost perfect $K$-tiling, we use the so-called \emph{fractional homomorphic tiling}, which was used by Bu\ss, H\`an and Schacht in~\cite{BHS}, together with the weak regularity lemma for hypergraphs.
At last, we deal with the extremal case by careful analysis.

Now we give our lemmas.
Throughout the paper, we write $\alpha \ll \beta \ll \gamma$ to mean that we can choose the positive constants
$\alpha, \beta, \gamma$ from right to left. More
precisely, there are increasing functions $f$ and $g$ such that, given
$\gamma$, whenever we choose some $\beta \leq f(\gamma)$ and $\alpha \leq g(\beta)$, the subsequent statement holds. 
Hierarchies of other lengths are defined similarly.

\begin{lemma}[Absorbing Lemma]\label{lem:absorb}
Let $k\ge 3$ and $K:=K^{(k)}(a_1,\dots, a_k)$ such that $a_1\le \cdots \le a_k$, $m=a_1+\cdots + a_k$ and $\gcd(K)=1$.
Suppose $\r' \ll \r \ll \rho, 1/m$ and $n$ is sufficiently large.
If $H$ is an $n$-vertex $k$-graph such that $\delta_{k-1}(H)\ge \rho n$, then there exists a vertex set $W\subseteq V(H)$ with $|W|\leq \r n$ such that for any vertex set $U\subseteq V(H)\setminus W$ with $|U|\leq \r' n$ and $|U|\in m \mathbb{Z}$, both $H[W]$ and $H[U\cup W]$ contain $K$-factors.
\end{lemma}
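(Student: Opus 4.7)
The plan is to follow the lattice-based absorbing framework of Han, which extends the R\"odl--Ruci\'nski--Szemer\'edi absorbing method. The central notion is reachability: call two vertices $u,v \in V(H)$ \emph{$(\beta,t)$-reachable} if there are at least $\beta n^{tm-1}$ sets $T$ of size $tm-1$ such that both $H[T\cup\{u\}]$ and $H[T\cup\{v\}]$ contain $K$-factors. My first goal is to show that under $\delta_{k-1}(H) \geq \rho n$, every pair of vertices of $H$ is $(\beta,t)$-reachable for some constants $\beta = \beta(\rho,m) > 0$ and integer $t = t(\rho,m) \geq 1$.

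To establish reachability, supersaturation from the codegree hypothesis guarantees that every vertex lies in $\Omega(n^{m-1})$ copies of $K$. Swapping a vertex in part $i$ of one copy of $K$ with a vertex in part $j$ of a suitable neighboring copy yields an elementary transferral that shifts vertex-balance by $\pm(a_i - a_j)$. Since $\gcd(K) = 1$, the integer span of $\{a_i - a_j : i,j \in [k]\}$ equals $\mathbb{Z}$, so a bounded-length chain of such swaps transfers any vertex to any other while preserving the tiling structure. This supplies a uniform $t$ for which every pair of vertices is $(\beta,t)$-reachable.

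With reachability in hand, for each $m$-set $S = \{v_1,\dots,v_m\}$ I construct an \emph{absorber} $A_S$ as follows. Greedily pick a copy of $K$ on $\{w_1,\dots,w_m\}$ disjoint from $S$, and then for each $i \in [m]$ pick a reachability gadget $T_i$ for the pair $(v_i,w_i)$ disjoint from all earlier choices; since there are $\Omega(n^{tm-1})$ reachable $T$'s per pair and only $O(1)$ vertices are forbidden at each step, the greedy selection succeeds. Set $A_S := \{w_1,\dots,w_m\} \cup \bigcup_i T_i$, of bounded size $\ell := m^2 t$. Then $H[A_S]$ admits a $K$-factor using the $K$-factors on each $T_i \cup \{w_i\}$, while $H[A_S \cup S]$ admits one using the $K$-factors on each $T_i \cup \{v_i\}$ together with the copy of $K$ on $\{w_1,\dots,w_m\}$. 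A direct counting argument yields $\Omega(n^\ell)$ absorbers per $m$-set $S$.

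Finally I select $W$ probabilistically: include each vertex independently with probability $p := \gamma/(2\ell)$. Chernoff and Markov bounds ensure that with positive probability $|W| \leq \gamma n$ and every $m$-set has at least $\gamma' n$ pairwise-disjoint absorbers inside $W$; trim $W$ slightly so that $m \mid |W|$, and pack the leftover vertices outside the chosen absorbers into a $K$-factor greedily using the codegree slack. Given any $U \subseteq V(H)\setminus W$ with $|U| \leq \gamma' n$ and $m \mid |U|$, partition $U$ into $m$-sets $S_1,\dots,S_{|U|/m}$ and absorb them one at a time, replacing each $A_{S_i}$'s local $K$-tiling on $A_{S_i}$ with the one on $A_{S_i} \cup S_i$. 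The main obstacle is the reachability step: the hypothesis $\gcd(K) = 1$ must be converted into bounded-length transferral chains, which requires setting up the correct index lattice over $[k]$ and verifying it equals all of $\mathbb{Z}$. Once reachability is established with a uniform constant $t$, the rest reduces to routine greedy and probabilistic manipulations.
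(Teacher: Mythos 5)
Your high-level framework (lattice-based absorbing, reachability, greedy absorber construction, then probabilistic selection of $W$) is the right one and matches the paper's spirit. However, there is a genuine gap in your reachability step, which you yourself flag as "the main obstacle" but then dismiss too quickly.

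Your proposal claims that under $\delta_{k-1}(H)\ge \rho n$ alone, "a bounded-length chain of such swaps transfers any vertex to any other," giving a uniform $t$ such that every pair of vertices is $(\beta,t)$-reachable. This is the conclusion that ultimately holds, but it is not accessible by the route you describe. The issue: the relation "$u,v$ share many neighboring $(k{-}1)$-sets," which is the natural criterion for direct reachability (the paper's Lemma 4.2 from Lo--Markstr\"om), is not transitive, and a small codegree $\rho n$ by itself gives no control over how many equivalence classes of the transitive closure exist or whether they can be connected at all. You cannot go straight from "every vertex is in many copies of $K$" to "any two vertices are linked by a bounded chain of swaps" without an intermediate structural input. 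Also, your phrase "swapping a vertex in part $i$ of one copy of $K$ with a vertex in part $j$ of a suitable neighboring copy" is undefined for vertices of $H$ — membership in a part of $K$ is a property of a specific copy of $K$, not of a vertex of $H$ — so the claimed "elementary transferral" is not a precise operation. Finally, your "index lattice over $[k]$" is the wrong lattice: $\gcd(K)=1$ lets you generate $\mathbb{Z}$ from the differences $a_i-a_j$, but what you actually need is that unit-vector differences $\mathbf{u}_i-\mathbf{u}_j$ lie in the lattice generated by index vectors of robust copies of $K$ with respect to a partition of $V(H)$, not of $V(K)$.

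The missing piece is the two-stage argument the paper uses: first invoke a partition lemma (Lemma 4.2 of the paper, from [HT]) that, given only the local reachability criterion, partitions $V(H)$ into at most $\lfloor 1/\rho\rfloor$ parts, each of which is already $(K,\beta,2^{c-1})$-closed; second, use the codegree condition plus supersaturation to find, for any two parts $V_i,V_j$, copies of $K$ with index vectors that are permutations of $(a_1,\dots,a_k)$, and use $\gcd(K)=1$ to extract $\mathbf{u}_i-\mathbf{u}_j$ as a $\mathbb{Z}$-combination of their differences. Only then does a merging lemma (Lemma 4.4, from [HZZ]) upgrade this to closedness of all of $V(H)$. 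Your absorber-gadget and probabilistic-selection arguments in the second half are standard and correct given closedness (they essentially reprove the paper's black-box Lemma 4.5 from [LM1]), but without the partition-then-merge step, the existence of a uniform $t$ has not been established, and the entire construction stalls.
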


We say $H$ is \emph{$\xi$-extremal} if there exists a set $B\subseteq V(H)$ of $\lfloor(1- \sigma(K))n\rfloor$ vertices such that $e(B)\le \xi \binom{|B|}{k}$.
In the following lemma we do not need the assumption $\gcd(K)=1$, instead we assume that $a_1<a_k$ (which is necessary for $\gcd(K)=1$).
Note that the $a_1=a_k$ (i.e., $a_1=\cdots=a_k$) case has been solved in~\cite[Lemma 2.4]{GaHa}.

\begin{lemma}[$K$-tiling Lemma] \label{lemK}
Let $k\ge 3$ and $K:=K^{(k)}(a_1,\dots, a_k)$ such that $a_1\le \cdots \le a_k$, $a_1<a_k$, $m=a_1+\cdots + a_k$.
For any $\a, \r, \xi >0$ such that $\r\ll 1/m$ and $\xi\ge 5bk^2 \r$, there exists an integer $n_0$ such that the following holds. If $H$ is a $k$-graph on $n>n_0$ vertices with $\delta_{k-1}(H)\ge (a_1/m - \gamma)n$,
then $H$ has a $K$-tiling that covers all but at most $\a n$ vertices unless $H$ is $\xi$-extremal.
\end{lemma}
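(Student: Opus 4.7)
The plan is to combine the weak hypergraph regularity lemma with a fractional-tiling argument in the reduced hypergraph, in the spirit of Bu\ss--H\`an--Schacht, to either produce an almost-perfect $K$-tiling in $H$ or detect the $\xi$-extremal structure.

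First I would apply the weak regularity lemma with parameters $\e \ll d \ll \r, \a$ to obtain an equipartition $V_0, V_1, \ldots, V_t$ with $|V_0| \le \e n$, and form the reduced $k$-graph $R$ on $[t]$ whose edges correspond to $\e$-regular $k$-tuples of parts with density at least $d$. A standard counting argument shows that, after discarding a small family of parts with too many irregular or low-density neighborhoods, $R$ inherits the condition $\delta_{k-1}(R) \ge (a_1/m - 2\r)t$.

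Next I would look for a fractional homomorphic $K$-tiling of $R$: non-negative weights on vertex-class preserving homomorphisms $\phi:K\to R$ whose total weight at each vertex of $R$ is at least $1$. If such a tiling exists, it can be converted into an actual $K$-tiling in $H$ covering all but $\a n$ vertices: for each positively weighted $\phi$, allocate a proportional sub-block of the regular tuple $(V_{\phi(1)},\ldots,V_{\phi(k)})$ and greedily extract copies of $K$ via the counting lemma for regular $k$-partite $k$-graphs. If no such fractional tiling exists, LP duality yields a dual witness $y:V(R)\to\mathbb{R}_{\ge 0}$ with $\sum_v y(v)\le t$ giving large mass to every homomorphic image of $K$. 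Combined with the codegree condition on $R$ and the assumption $a_1<a_k$, this forces a set $B'\subseteq V(R)$ of size close to $(1-\sigma(K))t$ spanning essentially no edges of $R$. Setting $B:=\bigcup_{i\in B'}V_i$, adjusted to size $\lfloor(1-\sigma(K))n\rfloor$ using $V_0$ if needed, every edge of $H[B]$ lies in an $\e$-irregular tuple, a tuple of density below $d$, a tuple touching $V_0$, or a tuple inside $B'$ which is a non-edge of $R$; these contributions sum to at most $\xi\binom{|B|}{k}$ whenever $\xi\ge 5bk^2\r$, establishing the $\xi$-extremality.

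The main obstacle will be the dichotomy step: quantitatively extracting a nearly-independent set of the correct size $(1-\sigma(K))t$ from the LP-dual witness, with an error controllable by $\r$ alone. The hypothesis $a_1<a_k$ is crucial here, since the balanced case $a_1=\cdots=a_k$ exhibits additional symmetry between the vertex classes of $K$ and is handled separately in~\cite{GaHa}. A secondary challenge is tracking the losses from each of the sources contributing to $e_H(B)$ so that the final bound matches the explicit constant $5bk^2\r$ asserted in the statement.
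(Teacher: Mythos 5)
The broad ingredients you choose — weak hypergraph regularity plus fractional homomorphic $K$-tilings in the reduced hypergraph, in the Bu\ss--H\`an--Schacht style — are indeed the same as in the paper (which proves the stronger Lemma~\ref{lemM}, allowing a few bad $(k-1)$-sets, and derives Lemma~\ref{lemK} from it). But the central dichotomy step in your outline is a stub, and the substitute you propose (LP duality) is not what the paper does and is not obviously workable as stated.

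Concretely: the paper does not consider the LP of fractional $\mathrm{hom}(K')$-tilings or its dual at all. Instead it starts from a \emph{maximal integer} $K'$-tiling $\mathcal{K}$ in the reduced graph $R$ (obtained by a self-improvement trick: it takes the supremum $f_0$ of $f(\a,\xi)=\a+\r\a^2\xi$ over failing pairs, chooses a near-optimal pair, and invokes the statement with slightly better parameters on $R$, via Proposition~\ref{propstab}). The key mechanism is then a ``weight-gain'' argument: Proposition~\ref{prop:frac} shows that a copy of $K'$ together with one extra edge hanging off a \emph{large} class admits a fractional $\mathrm{hom}(K')$-tiling whose weight strictly exceeds $m$. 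The hypothesis $a_1<a_k$ is used precisely here --- it is what makes this gain positive --- not because of any ``symmetry'' in the balanced case, as you suggest. Claim~\ref{clm:gain} bounds from above the weight of any admissible fractional tiling (otherwise $H$ would already have an $\a_0$-deficient tiling), and the rest of the argument shows that if the set $A$ formed by the uncovered cluster-vertices plus the non-$D$ part of $\mathcal{K}_2$ is not essentially independent, one can boost the weight past the Claim~\ref{clm:gain} bound — a contradiction. Extremality of $H$ is then inferred from sparsity of $A$ in $R$ by a regularity transfer.

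Your LP-dual route replaces all of this with one sentence: ``LP duality yields a dual witness $y$ ... this forces a set $B'$ of size close to $(1-\sigma(K))t$ spanning essentially no edges of $R$.'' That inference is where the entire difficulty lives, and nothing in your sketch explains how a nonnegative vertex weighting with bounded total mass and a covering condition over \emph{all} homomorphic images of $K$ produces a near-independent set of precisely the right cardinality $(1-\sigma(K))t$; nor how the codegree hypothesis and $a_1<a_k$ enter. Unlike the perfect fractional matching LP (where complementary slackness neatly localizes tight vertices), the fractional $\mathrm{hom}(K)$-tiling LP has exponential support indexed by homomorphisms, and its dual does not obviously collapse to a single sparse set. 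If you want to pursue the duality route you would have to supply that analysis in full; as written it is not a proof. There is also a parameter slip: the lemma's codegree defect is $\gamma$ (the paper's $\r$ macro), and your $5bk^2\gamma$ constant must be tracked through the regularity and the extremal-set inflation step, which you defer entirely. The correct roadmap is the paper's: reduce to $K'=K^{(k)}(a,b,\dots,b)$ with $a<b$, work with a maximal integer tiling in $R$, set up the weight-gain dichotomy via Fact~\ref{factFk}/Proposition~\ref{prop:frac}, and transfer sparsity of $A\subset V(R)$ to sparsity of $\bigcup_{i\in A} V_i$ in $H$.
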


Finally we give the extremal cases for Theorems~\ref{thm:main},~\ref{thm:112} and~\ref{thm:cyc_pac}, respectively.

\begin{theorem}\label{lemE}
Given $k\ge 3$, let $K:=K^{(k)}(a_1,\dots, a_k)$ such that $a_1\le \cdots \le a_k$, $m=a_1+\cdots + a_k$ and $\gcd(K)=1$.
Suppose $1/n\ll \xi \ll 1/m$ and $n\in m\mathbb{N}$.
If $H$ is an $n$-vertex $k$-graph which is $\xi$-extremal and satisfies \eqref{eq:deg}, then $H$ contains a $K$-factor.
\end{theorem}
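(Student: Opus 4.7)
I would prove Theorem~\ref{lemE} by a direct structural analysis of the extremal partition. Since $n\in m\mathbb{N}$, the $\xi$-extremal condition gives a partition $V(H)=A_0\cup B_0$ with $|A_0|=a_1n/m$, $|B_0|=(m-a_1)n/m$, and $e(B_0)\le \xi\binom{|B_0|}{k}$. The basic observation is: once we have cleaned up so that $H[B]$ is $K$-free, every copy of $K$ in a $K$-factor must have at least one part $V_i$ contained entirely in $A$ (otherwise every part contains a $B$-vertex and a transversal edge of the copy lies in $B$). Hence each copy uses at least $a_1=\min_i a_i$ vertices of $A$, and since $|A|\approx a_1n/m$ and the factor has $n/m$ copies, almost every copy uses exactly $a_1$ such vertices, placing its smallest part $V_1$ in $A$ and the remaining parts in $B$.

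The first step is a \textbf{structural cleanup}. Iteratively move any $v\in B_0$ with abnormally large degree in $H[B_0]$ into $A$, any $v\in A_0$ with abnormally small $(k-1)$-codegree into $B_0$ into $B$, and then eliminate any residual copy of $K$ in $H[B]$ by relocating one of its vertices to $A$. Since $e(B_0)\le \xi\binom{|B_0|}{k}$, only $o(n)$ vertices are moved, and with some bookkeeping we keep $|A|-a_1n/m$ in a controlled range. Letting $A\cup B$ denote the resulting partition, the codegree hypothesis \eqref{eq:deg} together with the Tur\'an bound $e(B)\le \ex(|B|,K)$ (so that the average $(k-1)$-degree inside $B$ is at most $f(n)$) then implies that for every $(k-1)$-set $S\subseteq B$, the number of $v\in A$ with $S\cup\{v\}\in E(H)$ is at least $\tfrac{a_1}{m}n+C$, so essentially every vertex of $A$ extends $S$.

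The second step is a \textbf{size adjustment} controlled by the Frobenius number. A $K$-factor in which $\ell_j$ copies place $V_j\subseteq A$ and all other parts in $B$ uses $\sum_j \ell_j a_j$ vertices of $A$ with $\sum_j \ell_j=n/m$, so $r:=|A|-a_1n/m$ must equal $\sum_{j\ge 2}\ell_j(a_j-a_1)$. Since $\gcd(a_2-a_1,\dots,a_k-a_1)=\gcd(K)=1$, the definition of the Frobenius number guarantees that every integer $r\ge C=g(a_2-a_1,\dots,a_k-a_1)+1$ is so expressible; the buffer $+C$ in \eqref{eq:deg} is used precisely to allow moving up to $C$ additional vertices between $A$ and $B$ during cleanup, without spoiling the codegree conclusion above, so that $r$ ends up in the Frobenius-expressible range.

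Finally, with the multiplicities $\ell_j$ fixed, partition $A$ into $n/m$ blocks matching them. The remaining task is to partition $B$ into corresponding blocks, each spanning a copy of $K^{(k-1)}(a_1,\dots,\widehat{a_j},\dots,a_k)$ in the \emph{common-link $(k-1)$-graph} $G$ on $B$ whose edges are the $(k-1)$-subsets of $B$ extended by every vertex of $A$. The cleanup makes $G$ almost complete, with all but $o(\binom{|B|}{k-1})$ of its potential edges present, so the desired blow-up factor in $G$ is produced by an absorbing-plus-greedy argument in the spirit of (but much simpler than) Lemma~\ref{lem:absorb}. The main technical obstacle is this last step: one must simultaneously build an absorbing structure in $G$, place the correct number of heavier type-$j$ blocks, and keep all blocks vertex-disjoint. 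The flexibility for this is afforded precisely by the Tur\'an buffer $f(n)$ and the Frobenius buffer $C$ in \eqref{eq:deg}.
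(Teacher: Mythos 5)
Your high-level plan (reduce to a bipartition $A\cup B$, use the Frobenius number to match the size of $A$ with a nonnegative combination of $a_j-a_1$, then tile with copies each placing one part in $A$) is recognizably related to what the paper does, but several of the load-bearing steps are wrong or insufficient.

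\textbf{The $K$-freeness claim is false.} You assert that once $H[B]$ is $K$-free, any copy of $K$ in a $K$-factor must have one part entirely in $A$, because otherwise a transversal edge lands in $B$. But $K$-free does not mean edge-free, so a transversal edge in $B$ is not a contradiction. The correct condition (implicit in Construction~\ref{cons:lb}) is much stronger: one needs $H[B]$ to be $K^{(k)}(b_1,\dots,b_k)$-free for every choice of $1\le b_i\le a_i$ with $\sum b_i=m-a_1+1$, since a $K$-copy using at most $a_1-1$ vertices of $A$ leaves a complete $k$-partite $(m-a_1+1)$-vertex trace inside $B$, not a copy of $K$ itself. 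Your cleanup targets the wrong family.

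\textbf{The cleanup cannot be done by moving $o(n)$ vertices.} Even $e(B_0)\le \xi\binom{|B_0|}{k}$ permits a positive proportion of $n$ pairwise disjoint edges, hence potentially $\Theta(n)$ vertex-disjoint copies of the forbidden configurations in $B_0$. Relocating ``one vertex per residual copy'' could therefore move $\Theta(n)$ vertices and destroy the balance $|A|\approx a_1n/m$. The paper avoids this entirely: it does \emph{not} make the large side $K$-free. The high-degree side $B'$ retains its $K$-copies, and the Tur\'an term $f(n)$ in~\eqref{eq:deg} is used constructively (via Fact~\ref{lemEX}) to \emph{find} $|q|+C$ vertex-disjoint copies of $K$ inside $B'$ and delete them, pushing the discrepancy down to at most $-C$. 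You have misattributed the role of $f(n)$ to the final absorbing step, where it is not needed.

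\textbf{The discrepancy range is mishandled.} You suggest the Frobenius buffer $C$ corresponds to ``moving up to $C$ additional vertices'' to land $r=|A|-a_1n/m$ in the expressible range. But $r$ can be of order $\pm\e_3 n$, vastly exceeding $C$. The paper first removes $q+C$ copies of $K$ entirely from $B'$ (the tiling $\K_1$), reducing the signed discrepancy $q_1=q-a_1(q+C)$ to a value with $-q_1\ge C$ and $|q_1|=O(\e_3 n)$, and only then applies the Frobenius representation (tiling $\K_3$) to cancel $q_1$ exactly. Both the magnitude control and the sign control are essential and are not achieved by your scheme.

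Beyond these, there is a remaining design choice: the paper's final step (Lemma~\ref{lem3}) constructs the perfect tiling of the cleaned bipartition via a random family of good $(m-a_1)$-sets and a bipartite matching using Lu--Sz\'ekely, not a hypergraph absorbing argument as you propose, but that choice is secondary compared to the gaps above.
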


\begin{theorem}\label{lemE3}
Given $k\ge 3$, let $1/n\ll \xi \ll 1/k$ such that $n\in (k+1)\mathbb{N}$.
Suppose $H$ is an $n$-vertex $k$-graph that is $\xi$-extremal.
Then $H$ contains a $K^{(k)}(1,\dots, 1,2)$-factor if either of the following holds:
\begin{enumerate}[(i)]
\item $\delta_{k-1}(H)\ge n/(k+1)+1$;
\item $\delta_{k-1}(H)\ge n/(k+1)$ and $k-i \nmid\binom{n'-i}{k-1-i}$ for some $0\le i\le k-2$ and $n' = \frac{k n}{k+1}+1$.
\end{enumerate}
\end{theorem}

\begin{theorem}\label{lemE2}
Given $k\ge 4$ and $s\ge 2$, let $1/n\ll \xi \ll 1/s, 1/k$ such that $n\in s(k-1)\mathbb{N}$.
Suppose $H$ is an $n$-vertex $k$-graph with $\delta_{k-1}(H)\ge \frac{\lceil s/2 \rceil}{s(k-1)}n$.
If $H$ is $\xi$-extremal, then $H$ contains a $C_s^k$-factor.
\end{theorem}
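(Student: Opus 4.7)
The plan is to exploit the rigidity of the extremal structure and directly construct a $C_s^k$-factor. With $\sigma(C_s^k) = \lceil s/2 \rceil /(s(k-1))$ equal to the ratio $\tau(C_s^k)/|V(C_s^k)|$, the given set $B$ of size $\lfloor (1-\sigma(C_s^k))n\rfloor$ with $e(B)\le \xi\binom{|B|}{k}$ forces the target number $N := n/(s(k-1))$ of cycles to use exactly $\lceil s/2 \rceil$ vertices of $A:=V(H)\setminus B$ each: any cycle using more $A$-vertices would, by the count $|A|=\lceil s/2 \rceil N + O(1)$, be compensated by another cycle using fewer $A$-vertices than its vertex-cover number, contradicting the near-independence of $B$.

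First I would clean the partition. I move to $A$ every vertex of $B$ that lies in more than $\xi^{1/2}\binom{|B|-1}{k-1}$ edges inside $B$, and I move to $B$ every vertex of $A$ whose codegree into $(k-1)$-subsets of $B$ is noticeably below $(1-\sigma(C_s^k))n$. A standard density estimate shows that only $O(\xi^{1/2}n)$ vertices are moved, so the new partition (still called $A\cup B$) has $|A| = \lceil s/2 \rceil n/(s(k-1)) \pm O(\xi^{1/2}n)$, every remaining vertex of $B$ lies in few $B$-interior edges, and every vertex of $A$ has codegree $(1-o(1))(1-\sigma(C_s^k))n$ into $(k-1)$-subsets of $B$.

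Next I would absorb the remaining $O(\xi^{1/2}n)$ exceptional vertices by a short greedy procedure: for each bad vertex $v$, I build a dedicated copy of $C_s^k$ containing $v$. Here I exploit the fact that $C_s^k$ admits vertex covers of sizes $\lceil s/2 \rceil, \lceil s/2 \rceil + 1, \ldots$; by choosing an absorber that uses one more or one fewer $A$-vertex than the canonical $\lceil s/2 \rceil$, I simultaneously remove a bad vertex and adjust $|A|$ toward the target $\lceil s/2 \rceil N'$ on the remainder, where $N'$ is the number of cycles still to be placed. Since the codegree condition gives $\Omega(n)$ candidates at every greedy step and the bad set is $o(n)$, disjointness of the absorbers is easy to maintain.

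Finally I would tile the cleaned remainder $A'\cup B'$ by canonical copies. A canonical $C_s^k$ uses $\lceil s/2 \rceil$ vertices of $A'$ placed as alternating shared vertices: for $s=2t$ this decomposes the cycle into $t$ loose-path gadgets $K^{(k)}(1,\ldots,1,2)$ centered at the $A'$-vertices, and for $s=2t+1$ it decomposes into $t$ such gadgets plus one edge carrying two consecutive $A'$-vertices. I construct one cycle at a time by first picking its $\lceil s/2 \rceil$ $A'$-vertices, then greedily selecting the shared and non-shared $B'$-vertices from the abundant codegree neighborhoods and verifying that each chosen $k$-tuple is an edge of $H$. The hardest step will be this last one: the rigid edge structure of $C_s^k$ means that a careless greedy procedure could leave an unpluggable remainder, and when $s$ is odd the presence of the doubled-$A'$ edge introduces a parity obstruction. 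I plan to handle both by reserving a small buffer of flexible $A'$- and $B'$-vertices throughout and finishing the last $o(n)$ cycles via an auxiliary Hall-type matching argument in a bipartite graph between unused $A'$-vertices and $(k-1)$-subsets of unused $B'$-vertices, whose feasibility follows from the uniform codegree lower bound of each $A'$-vertex into $B'$.
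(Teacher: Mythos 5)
Your outline captures the right skeleton (clean the partition, absorb exceptional vertices, tile the remainder via a Hall/matching argument), and it is roughly the paper's strategy, but it misses the central difficulty that makes this theorem different from the generic extremal lemma (Theorem~\ref{lemE}). Here the codegree hypothesis is \emph{exactly} $\frac{\lceil s/2\rceil}{s(k-1)}n$ with no additive slack. After cleaning to $A'\cup B'$, the surplus $q := |B'| - \lfloor(1-\sigma)n\rfloor$ may be positive, and to fix it one must place $q$ copies of $C_s^k$ each using $m-a_1+1$ vertices of $B'$ (equivalently only $a_1-1=\lceil s/2\rceil-1$ vertices of $A'$). Such a copy necessarily contains at least one edge lying entirely in $B'$, where the density is essentially zero, so your claim that ``the codegree condition gives $\Omega(n)$ candidates at every greedy step'' fails precisely at this step. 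In Theorem~\ref{lemE} the extra term $f(n)+C$ is what pays for these edges via a Tur\'an-number count; without it one cannot simply greedily build absorbers. The paper circumvents this by choosing the $k$-partite realization with $a_1=\lceil s/2\rceil$ and some part of size $\lceil s/2\rceil+1$ (so the Frobenius constant $C$ is $0$), and then observing that one only needs to find $q$ disjoint copies of $P_2^k$ (a loose path of length two) in $H[B']$ when $s\ge 3$, or of $C_2^k$ when $s=2$. Because ${\rm ex}(n,P_2^k)\le\binom{n-2}{k-2}$ and ${\rm ex}(n,C_2^k)=O(n^{\max\{2,k-3\}})$ (Frankl--F\"uredi), the inequality $\delta_{k-1}(H[B'])\ge q$ alone is enough to produce these via a Fact~\ref{lemEX}-style counting argument; the paths are then extended to full copies of $C_s^k$ through $A'$ using the abundant good $(k-1)$-sets. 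Your proposal neither isolates this bottleneck nor supplies the Tur\'an bound that makes the surplus absorbers possible under the unslacked codegree, so as written it has a genuine gap. Your final Hall-type step is a reasonable sketch of what Lemma~\ref{lem3} accomplishes, but note the paper also needs a probabilistic reservoir argument plus the Lu--Sz\'ekely packing theorem to set up that matching, which your description leaves implicit.
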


\begin{proof}[Proofs of Theorems~\ref{thm:main},~\ref{thm:112} and~\ref{thm:cyc_pac}]
We first prove Theorem~\ref{thm:main}.
Let $k\ge 3$ and $K:=K^{(k)}(a_1, \dots, a_k)$ such that $a_1\le \cdots \le a_k$, $m=a_1+\cdots + a_k$ and $\gcd(K)=1$. 
Suppose $1/n\ll \r'\ll \r \ll \xi \ll 1/m$ and $n\in m\mathbb{N}$.
Suppose $H$ is an $n$-vertex $k$-graph satisfying \eqref{eq:deg}.
If $H$ is $\xi$-extremal, then $H$ contains a $K$-factor by Theorem~\ref{lemE}.
Otherwise we apply Lemma~\ref{lem:absorb} and find an absorbing set $W$ in $V(H)$ of size at most $\r n$ which has the absorbing property.
Let $H':=H\setminus W$ and $n'=|V(H')| \ge (1-\r)n$.
Note that $m\mid n'$.
If $H'$ is $({\xi}/2)$-extremal, then there exists a vertex subset $B'$ in $V(H')$ of order $\lfloor (1-\sigma(K))n' \rfloor$ such that $e_{H'}(B')\le \frac{\xi}2 \binom{|B'|}{k}$.
Thus by adding to $B'$ at most $n-n'\le \r n$ vertices, we get a set $B$ of exactly $\lfloor (1-\sigma(K))n \rfloor$ vertices in $V(H)$ with % YZ rewrote the line below
\[
e_H(B) \le e_{H'}(B') + \r n \cdot \binom{n-1}{k-1} \le \frac{\xi}{2} \binom{|B'|}{k} + k \r \binom{n}{k} \le \xi \binom{|B|}{k}
\]
because the choice of $\r$.
This means that $H$ is $\xi$-extremal, a contradiction.
We thus assume that $H'$ is not $(\xi/2)$-extremal.
By applying Lemma~\ref{lemK} on $H'$ with $\xi/2$ and $\a=\r'$, we obtain a $K$-tiling $M$ that covers all but a set $U$ of at most $\r' n$ vertices.
By the absorbing property of $W$, $H[W\cup U]$ contains a $K$-factor and together with the $K$-tiling $M$ we obtain a $K$-factor of $H$.

The proof of Theorem~\ref{thm:cyc_pac} is the same except that we replace $K$ by $C_s^k$ and replace Theorem~\ref{lemE} by Theorem~\ref{lemE2} (here we apply Lemma~\ref{lemK} with the $k$-partite $k$-graph given by Proposition~\ref{propcsk}).
Similarly, after replacing Theorem~\ref{lemE} by Theorem~\ref{lemE3}, the arguments above prove the upper bounds in Theorem~\ref{thm:112}.
To see the lower bounds, we know $\delta(n, K^{(k)}(1,\dots, 1,2))\ge n/(k+1)$ from \eqref{eq:dlb}.
Let $n' = \frac{k n}{k+1}+1$.
If $k-i\mid \binom{n'-i}{k-1-i}$ for all $0\le i\le k-2$, then the result of Keevash~\cite{Keevash_design} implies that the Steiner system $S(k-1, k, n')$ exists, in other words, $\coex(n', K^{(k)}(1,\dots, 1,2))=1$.
Then the lower bound $\delta(n, K^{(k)}(1,\dots, 1,2))\ge n/(k+1)+1$ follows from Proposition~\ref{disprove} (2).
\end{proof}

\section{Proof of the Absorbing Lemma}

The following simple proposition will be useful.
\begin{proposition} \label{prop:deg}
Let $H$ be a $k$-graph. If $\delta_{k-1}(H)\geq x n$ for some $0\le x\le 1$, then $\delta_{1}(H)\geq x\binom{n- 1}{k- 1}$.
\end{proposition}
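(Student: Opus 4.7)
The plan is a straightforward double-counting argument on incidence pairs, exploiting the standard relationship between $d$-degrees for different $d$ in a $k$-graph. Fix an arbitrary vertex $v\in V(H)$; I will show that $\deg_H(v)\ge x\binom{n-1}{k-1}$, and since $v$ is arbitrary, this yields the claimed bound on $\delta_1(H)$.

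I would count in two ways the number of ordered pairs $(S,e)$ such that $S$ is a $(k-1)$-subset of $V(H)$ containing $v$, and $e\in E(H)$ is an edge with $S\subseteq e$. On the one hand, summing over the $\binom{n-1}{k-2}$ choices of $S\ni v$ and applying the codegree hypothesis $\deg_H(S)\ge xn$ gives a lower bound of $\binom{n-1}{k-2}\cdot xn$ on the number of such pairs. On the other hand, each edge $e$ containing $v$ is counted once for every $(k-1)$-subset of $e$ that contains $v$; there are exactly $\binom{k-1}{k-2}=k-1$ such subsets. Hence the number of pairs is also equal to $(k-1)\deg_H(v)$.

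Combining the two counts yields
\[
(k-1)\deg_H(v)\;\ge\; xn\binom{n-1}{k-2}.
\]
A routine manipulation of binomial coefficients, using the identity $(k-1)\binom{n-1}{k-1}=(n-k+1)\binom{n-1}{k-2}$ together with $n\ge n-k+1$, then gives $\deg_H(v)\ge x\binom{n-1}{k-1}$, as desired. There is no real obstacle here; the only thing to be careful about is the binomial identity in the final simplification, which provides exactly the factor needed to convert the lower-degree bound into the desired vertex-degree bound.
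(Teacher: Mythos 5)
Your double-counting argument is correct: counting incidences $(S,e)$ with $v\in S\subseteq e$, $|S|=k-1$, in two ways gives $(k-1)\deg_H(v)\ge xn\binom{n-1}{k-2}$, and the identity $(k-1)\binom{n-1}{k-1}=(n-k+1)\binom{n-1}{k-2}$ together with $n\ge n-k+1$ yields the claim. The paper states this proposition without proof, so there is nothing to compare against; your argument is the standard one and fills the gap cleanly.
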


The following concepts were introduced by Lo and Markstr\"om~\cite{LM1}.
Given a $k$-graph $F$ of order $m$, $\beta>0$, $i \in \mathbb{N}$, we say that two vertices $u, v$ in a $k$-graph $H$ on $n$ vertices are \emph{$(F, \beta ,i)$-reachable (in $H$)} if and only if there are at least $\beta n^{im-1}$ $(im-1)$-sets $W$ such that both $H[\{u\} \cup W]$ and $H[\{v\} \cup W]$ contain $F$-factors.
A vertex set $A$ is \emph{$(F, \beta ,i)$-closed in $H$} if every pair of vertices in $A$ are $(F, \beta ,i)$-reachable in $H$.
For $x\in V(H)$, let $\tilde{N}_{F, \beta, i}(x)$ be the set of vertices that are $(F, \beta, i)$-reachable to $x$ in $H$.

We use the following lemma in~\cite{HT} which gives us a partition $\cP = \{V_1, \dots, V_r\}$ on $H$ such that for any $i\in [r]$, $V_i$ is $(F, \beta, 2^{c-1})$-closed. 

\begin{lemma}[\cite{HT}, Lemma 6.3]\label{lem:partition}
Let $c, k, m\ge 2$ be integers and suppose $1/n\ll \beta \ll \a \ll 1/c, \delta', 1/m$.
%, there exists constant $\beta>0$ such that the following holds for all sufficiently large $n$. 
Let $F$ be an $m$-vertex $k$-graph.
Assume an $n$-vertex $k$-graph $H$ satisfies that $|\tilde{N}_{F, \a, 1}(v)| \ge \delta' n$ for any $v\in V(H)$ and every set of $c+1$ vertices in $V(H)$ contains two vertices that are $(F, \a, 1)$-reachable. 
Then we can find a partition $\cP$ of $V(H)$ into $V_1,\dots, V_r$ with $r\le \min\{c, 1/\delta' \}$ such that for any $i\in [r]$, $|V_i|\ge (\delta' - \a) n$ and $V_i$ is $(F, \beta, 2^{c-1})$-closed in $H$.
\end{lemma}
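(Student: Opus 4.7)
The plan is to iteratively construct a partition of $V(H)$ whose blocks are closed under progressively higher-level reachability, using a \emph{doubling lemma} to boost the closure exponent at each merge. The doubling lemma I would prove first reads: if $u,v\in V(H)$ share at least $\eta n$ common $(F,\beta,i)$-reachable vertices, then $u$ and $v$ are themselves $(F,\beta',2i)$-reachable with $\beta'=\Omega(\eta\beta^{2})$. Given a shared $w$ and two $(im-1)$-connectors $W_1$ for $\{u,w\}$ and $W_2$ for $\{v,w\}$ that are disjoint from each other and from $\{u,v,w\}$, the set $W:=W_1\cup\{w\}\cup W_2$ has size $2im-1$; both $\{u\}\cup W$ and $\{v\}\cup W$ then admit $F$-factors obtained by pasting the factor on $\{u\}\cup W_1$ with the one on $\{w\}\cup W_2$ (and symmetrically for $v$). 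Counting admissible triples $(w,W_1,W_2)$ and dividing by the multiplicity with which each $(2im-1)$-set is produced yields $\beta'$, since the ``bad'' non-disjoint configurations contribute only $O(n^{2im-2})$, negligible next to the main term $\Omega(\beta^{2}n^{2im-2})$.

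I would then run the following merging algorithm. Call a partition $\cP_j=(V_1^{(j)},\dots,V_{r_j}^{(j)})$ \emph{$(\beta_j,2^{j})$-good} if each block has size at least $(\delta'-\alpha)n$ and is $(F,\beta_j,2^{j})$-closed in $H$. An initial $(\beta_0,1)$-good partition is extracted from hypothesis (a): the $(F,\alpha,1)$-reachability graph $R$ has minimum degree at least $\delta' n$, so one can group together vertices whose $R$-neighborhoods have $\Omega(n)$ common elements. At stage $j$, pick a representative $u_i$ from each block. If some pair $u_a,u_b$ with $a\ne b$ is $(F,\alpha,1)$-reachable, or more generally satisfies $|\tilde N_{F,\beta_j,2^{j}}(u_a)\cap \tilde N_{F,\beta_j,2^{j}}(u_b)|\ge \eta n$, invoke the doubling lemma so that $u_a$ and $u_b$ are $(F,\beta_{j+1},2^{j+1})$-reachable, then merge $V_a^{(j)}$ with $V_b^{(j)}$; the combined block is $(F,\beta_{j+1},2^{j+1})$-closed because for any $x\in V_a^{(j)}$ and $y\in V_b^{(j)}$ one can chain an inside-$V_a$ connector to the new $u_au_b$-connector to an inside-$V_b$ connector. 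Otherwise the representatives are pairwise non-$(F,\alpha,1)$-reachable, so (b) forces $r_j\le c$ and the procedure halts.

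Since each merge strictly reduces the block count, at most $c-1$ merges occur, yielding final closure exponent at most $2^{c-1}$ and at most $c$ blocks, each of size at least $(\delta'-\alpha)n$; the bound $r\le 1/\delta'$ then follows automatically from the block-size condition. The main obstacle is parameter bookkeeping: each doubling roughly squares $\beta_j$ and shrinks $\eta$, so $\beta:=\beta_{c-1}$ is an iterated exponential in $\alpha$, and one must pick $\eta$ in parallel with the $\beta_j$ sequence so that $\eta\ll \beta_j$ throughout while the concatenation still produces a non-trivial proportion of valid $(2^{j+1}m-1)$-sets. This is exactly what the hierarchy $\alpha\ll 1/c,\delta',1/m$ buys. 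A secondary technical nuisance is that the ``no mergeable pair'' case must propagate from the chosen representatives to the entire blocks; this is handled by selecting each $u_i$ as a typical vertex of its block (one whose reachability neighborhood has the correct size inside its own block), so that a failure of overlap at the representative level forces failure of $(F,\alpha,1)$-reachability between the blocks and thereby licenses the appeal to (b).
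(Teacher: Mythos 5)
The paper does not prove this lemma; it is quoted verbatim from \cite{HT}, so there is no ``paper's own proof'' to compare against. I will therefore assess your sketch on its own terms. The general plan --- a doubling lemma plus iterative merging of blocks that have overlapping reachability neighbourhoods --- is the standard approach and is in the right direction, and your doubling lemma and its proof are correct. But there are concrete gaps in the merging bookkeeping that would need to be repaired.

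The most serious is the exponent arithmetic at a merge. You claim that if $V_a^{(j)}$ and $V_b^{(j)}$ are $(F,\beta_j,2^{j})$-closed and the representatives $u_a,u_b$ are $(F,\beta_{j+1},2^{j+1})$-reachable, then the merged block is $(F,\beta_{j+1},2^{j+1})$-closed, ``because one can chain an inside-$V_a$ connector to the new $u_au_b$-connector to an inside-$V_b$ connector.'' Chaining is additive in the index: for $x\in V_a^{(j)}$ and $y\in V_b^{(j)}$ the connector is $W_1\cup\{u_a\}\cup W_2\cup\{u_b\}\cup W_3$ with $|W_1|=|W_3|=2^{j}m-1$ and $|W_2|=2^{j+1}m-1$, giving a set of size $(2^j+2^{j+1}+2^j)m-1=2^{j+2}m-1$. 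So the merged block is closed at level $2^{j+2}$, not $2^{j+1}$, and after up to $c-1$ merges your exponent is not bounded by $2^{c-1}$. To stay within budget you must either make the merge criterion symmetric across all cross-pairs (so that every $x\in V_a$, $y\in V_b$ directly shares $\Omega(n)$ common $(F,\beta_j,2^j)$-reachable vertices, giving $x,y$ level $2^{j+1}$ with no inner chaining) or track the budget more carefully; the representative-only criterion does not suffice.

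The second gap is the initialisation. ``Group together vertices whose $R$-neighbourhoods have $\Omega(n)$ common elements'' does not produce a partition (overlap is not transitive), and even if it did, the resulting blocks would not be $(F,\beta_0,1)$-closed. If $u,u'\in\tilde N_{F,\alpha,1}(v)$, each is level-1-reachable to the hub $v$, not to one another; the best you can deduce for $u,u'$ is level $2$ via the doubling lemma. So the starting partition already lives at exponent $2$, which again inflates the final exponent beyond $2^{c-1}$ under your merge rule. Finally, the ``representative-to-block'' propagation in the non-merge branch is asserted, not proved: closure of $V_a^{(j)}$ at level $2^j$ gives you $\tilde N_{F,\beta_j,2^j}(x)\supseteq V_a^{(j)}$, but hypothesis (b) concerns $(F,\alpha,1)$-reachability, and nothing in the set-up forces $\tilde N_{F,\alpha,1}$-overlaps to fail for all $x\in V_a^{(j)}$, $y\in V_b^{(j)}$ just because they fail for the chosen representatives; this step needs an explicit argument, e.g.\ by carrying the level-1 neighbourhoods alongside the higher-level ones throughout. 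The claim $r\le 1/\delta'$ also needs a word, since block sizes only give $r\le 1/(\delta'-\alpha)$ directly, but here the hierarchy $\alpha\ll\delta'$ does rescue you.
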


Let $\cP = \{V_1, \dots, V_r\}$ be a vertex partition of $H$.
The \emph{index vector} $\mathbf{i}_{\cP}(S)\in \mathbb{Z}^r$ of a subset $S\subset V$ with respect to $\cP$ is the vector whose coordinates are the sizes of the intersections of $S$ with each part of $\cP$, i.e., $\mathbf{i}_{\cP}(S)_{V_i}=|S\cap V_i|$ for $i\in [r]$. 
We call a vector $\mathbf{i}\in \mathbb{Z}^r$ an \emph{$s$-vector} if all its coordinates are nonnegative and their sum equals to $s$. 
Given a $k$-graph $F$ of order $m$ and $\mu>0$, an $m$-vector $\mathbf{v}$ is called a \emph{$\mu$-robust $F$-vector} if there are at least $\mu n^m$ copies $C$ of $F$ in $H$ satisfying $\mathbf{i}_\cP(V(C))=\mathbf{v}$. 
Let  $I_{\cP, F}^{\mu}(H)$ be the set of all $\mu$-robust $F$-vectors.
For $j\in [r]$, let $\mathbf{u}_j\in \mathbb{Z}^r$ be the $j$th \emph{unit vector}, namely, $\bfu_j$ has 1 on the $j$th coordinate and 0 on other coordinates.
A \emph{transferral} is a vector of form $\bfu_i - \bfu_j$ for some distinct $i,j\in [r]$.
Let $L_{\cP, F}^{\mu}(H)$ be the lattice (i.e., the additive subgroup) generated by $I_{\cP, F}^{\mu}(H)$.

To prove Lemma~\ref{lem:absorb}, our main tool is Lemma~\ref{lem:partition} together with the following results. The next proposition is a simple counting result that follows from~\eqref{eq:erdos}.
%Note that in the following two results, the constants are not presented in the hierarchy, as we will need to retrieve $t_0$ in the proof of Lemma~\ref{lem:absorb}.

\begin{proposition}\label{supersaturation}
Given integers $k, r_0, a_1,\dots, a_k \in \mathbb{N}$, suppose that $1/n\ll \mu\ll \eta, 1/k, 1/r_0, 1/a_1,\dots, 1/a_k$. 
Let $H$ be a $k$-graph on $n$ vertices with a vertex partition $V_1 \cup \dots \cup V_r$ where $r\le r_0$. Suppose $i_1, \dots, i_k\in [r]$ and $H$ contains at least $\eta {n}^{k}$ edges $e=\{ v_1, \dots, v_k \}$ such that $v_1\in V_{i_1}$, $\dots, v_k\in V_{i_k}$. Then $H$ contains at least $\mu {n}^{a_1+\cdots + a_k}$ copies of $K^{(k)}(a_1,\dots, a_k)$ whose $j$th part is contained in $V_{i_j}$ for all $j\in [k]$.
\end{proposition}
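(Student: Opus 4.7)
The plan is to deduce this from Erd\H{o}s's Tur\'an bound~\eqref{eq:erdos} via a standard supersaturation argument, after a preliminary splitting step to handle the possibility that $i_1,\dots,i_k$ are not pairwise distinct. For each $i\in[r]$ set $c_i:=|\{j\in[k]:i_j=i\}|$ and $J_i:=\{j\in[k]:i_j=i\}$, and partition $V_i$ uniformly at random into sub-parts $\{V'_j:j\in J_i\}$ of sizes $\lfloor|V_i|/c_i\rfloor$ or $\lceil|V_i|/c_i\rceil$; the resulting $V'_1,\dots,V'_k$ are pairwise disjoint with $V'_j\subseteq V_{i_j}$. Since the hypothesis easily implies $|V_{i_j}|\ge c n$ for some $c=c(\eta,k,r)>0$, we have $|V'_j|\ge cn/k$. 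For any fixed edge $e=\{v_1,\dots,v_k\}$ of the prescribed type, the probability that $v_j\in V'_j$ for every $j\in[k]$ is at least $(1-o(1))\prod_i c_i^{-c_i}$, so linearity of expectation and pigeonhole yield a single partition for which at least $\eta' n^k$ edges of the prescribed type satisfy $v_j\in V'_j$ for all $j$, where $\eta'=\eta'(\eta,k,r)>0$.

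It thus suffices to prove the following partite supersaturation statement: any $k$-partite $k$-graph $H'$ on parts $V'_1,\dots,V'_k$ with each $|V'_j|=\Omega(n)$ and at least $\eta' n^k$ edges contains at least $\mu n^{a_1+\cdots+a_k}$ copies of $K:=K^{(k)}(a_1,\dots,a_k)$ with $j$th part contained in $V'_j$. Set $a_{\max}:=\max_j a_j$ and choose a constant $m$ large enough that~\eqref{eq:erdos} forces every $k$-graph on at most $km$ vertices with more than $\eta' m^k/2$ edges to contain $K^{(k)}(a_{\max},\dots,a_{\max})$. By Markov, a positive fraction of the $k$-tuples $(T_1,\dots,T_k)$ with $T_j\subseteq V'_j$ and $|T_j|=m$ induce at least $\eta' m^k/2$ edges of $H'$; each such $k$-tuple therefore contains a copy $K'$ of $K^{(k)}(a_{\max},\dots,a_{\max})$. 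Because $H'$ is $k$-partite and all parts of $K'$ have equal size, the parts of $K'$ can be relabeled so that its $j$th part lies in $T_j$, and truncating each part to $a_j$ vertices yields a copy of $K$ with $j$th part in $T_j\subseteq V'_j$. A standard double-counting---each such copy of $K$ is over-counted at most $\prod_j\binom{|V'_j|-a_j}{m-a_j}$ times---then gives at least $\mu n^{a_1+\cdots+a_k}$ distinct copies for some $\mu=\mu(\eta,k,r,a_1,\dots,a_k)>0$.

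No step presents a real obstacle: the splitting reduction is cosmetic and only needed when the $i_j$'s coincide, and the permutation ambiguity in the partite Erd\H{o}s step is neutralized by invoking~\eqref{eq:erdos} for the balanced graph $K^{(k)}(a_{\max},\dots,a_{\max})$ before truncating. The only task requiring care is to chase the hierarchy $\mu\ll\eta$ through the three composed steps and verify that the final $\mu$ remains positive and depends only on the allowed parameters.
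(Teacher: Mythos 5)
Your proof is correct and follows the route the paper indicates: the paper omits the proof, saying only that it ``follows from~\eqref{eq:erdos},'' and your argument is the standard supersaturation deduction (sample random $m$-sets from each part, apply the Erd\H{o}s Tur\'an bound, double count). The two technical wrinkles you address---the random splitting to accommodate repeated indices $i_j$, and the detour through $K^{(k)}(a_{\max},\dots,a_{\max})$ to neutralize the permutation ambiguity before truncating to sizes $a_1,\dots,a_k$---are exactly the details one must fill in to make the ``simple counting'' rigorous.
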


We use the following result in \cite{HZZ_tiling}, which says that $V(H)$ is closed when all the transferrals of $\cP$ are present. We state it in a less general form, namely, we omit the trash set $V_0$ in its original form. % YZ removed roughly speaking

\begin{lemma}[\cite{HZZ_tiling}, Lemma 3.9]\label{lattice}
Let $i_0, k, r_0>0$ be integers and let $F$ be an $m$-vertex $k$-graph.
Suppose $1/n\ll1/i_0', \beta' \ll \e, \beta, \mu$ such that $i_0'\in \mathbb{Z}$.
Let $H$ be a $k$-graph on $n$ vertices with a partition $\cP=\{V_1, \dots, V_r\}$ such that $r\le r_0$ and for each $j\in [r]$, $|V_j|\ge \e n$ and $V_j$ is $(F,\beta,i_0)$-closed in $H$. 
If $\bfu_j - \bfu_l\in L_{\cP, F}^{\mu}(H)$ for all $1\leq j< l\leq r$, then $V(H)$ is $(F,\beta',i_0')$-closed in $H$.
\end{lemma}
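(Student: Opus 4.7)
The plan is to show that any two vertices $u,v\in V(H)$ are $(F,\beta',i_0')$-reachable for some $\beta'>0$ and integer $i_0'$ depending only on $i_0,\beta,\mu,r_0,\e$, which by definition gives $(F,\beta',i_0')$-closedness of $V(H)$. I would rely on two elementary principles established at the outset. \emph{Composition:} if $u,w$ are $(F,\beta_1,i_1)$-reachable and $w,v$ are $(F,\beta_2,i_2)$-reachable, then $u,v$ are $(F,\beta_3,i_1+i_2)$-reachable, witnessed by sets $W_1\cup\{w\}\cup W_2$ for disjoint $W_1,W_2$ avoiding $\{u,v,w\}$. \emph{Padding:} reachability indices can be enlarged by appending vertex-disjoint $F$-copies, and $H$ contains $\Omega(n^m)$ copies of $F$ (since applying the closedness hypothesis with $u=v$ produces many $F$-copies through every vertex).

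In the same-part case $u,v\in V_j$, the within-class hypothesis immediately gives $(F,\beta,i_0)$-reachability, which padding lifts to the desired $(F,\beta',i_0')$-reachability for any prescribed $i_0'\ge i_0$.

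In the cross-part case $u\in V_j$, $v\in V_l$ with $j\ne l$, the plan is to exhibit a single ``bridge pair'' $(u^*,v^*)\in V_j\times V_l$ that is $(F,\mu^*,K^*)$-reachable for constants depending only on the lattice data; composition with within-class reachabilities (swapping $u\leftrightarrow u^*$ inside $V_j$ and $v^*\leftrightarrow v$ inside $V_l$) then yields reachability of $(u,v)$. For the bridge I would write the transferral as $\bfu_j-\bfu_l=\sum_{i=1}^t c_i\bfv_i$ with each $\bfv_i\in I^{\mu}_{\cP,F}(H)$, put $P=\{i:c_i>0\}$, $N=\{i:c_i<0\}$, and $K:=\sum_{i\in P}c_i=\sum_{i\in N}|c_i|$ (equal because each $\bfv_i$ has coordinate sum $m$ while $\bfu_j-\bfu_l$ sums to $0$). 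Reading off the $j$-th and $l$-th coordinates of the transferral forces the existence of $i_+\in P$ with $(\bfv_{i_+})_j\ge 1$ and $i_-\in N$ with $(\bfv_{i_-})_l\ge 1$ (since $(\bfv_i)_{\cdot}\ge 0$ always). Using $\mu$-robustness I would then greedily pick two vertex-disjoint collections $\mathcal{F}_P,\mathcal{F}_N$ of $K$ $F$-copies each: $\mathcal{F}_P$ contains $c_i$ copies of type $\bfv_i$ for $i\in P$ with one $\bfv_{i_+}$-copy through a chosen $u^*$, and $\mathcal{F}_N$ contains $|c_i|$ copies of type $\bfv_i$ for $i\in N$ with one $\bfv_{i_-}$-copy through $v^*$.

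The main obstacle is arranging that a single witness set $W$ serves both $u^*$ and $v^*$ rather than producing two incompatible factorisations. The key rearrangement
\[
\bfu_j+\sum_{i\in N}|c_i|\bfv_i\;=\;\bfu_l+\sum_{i\in P}c_i\bfv_i
\]
shows that $V(\mathcal{F}_P)\setminus\{u^*\}$ and $V(\mathcal{F}_N)\setminus\{v^*\}$ have the same index vector, yet the actual vertex sets typically differ. I would resolve this mismatch via the composition principle: enlarge the witness by an auxiliary within-$V_l$ reachability swap between $v^*$ and some vertex of $V(\mathcal{F}_P)\cap V_l$, and symmetrically inside $V_j$, so that the bridging witness ultimately takes the form $V(\mathcal{F}_P)\cup V(\mathcal{F}_N)\setminus\{u^*,v^*\}$ together with a constant number of extra padding vertices. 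The final count multiplies the $\mu$-robust choices ($\Omega(n^m)$ per selected $F$-copy) with within-class witness counts ($\beta n^{i_0m-1}$ per swap), divided by a bounded overcount, yielding at least $\beta' n^{i_0'm-1}$ valid witness sets with $i_0'=O(K+i_0)$ and $\beta'=\Omega(\mu^{2K}\beta^{O(1)})$. The most delicate bookkeeping lies in controlling this overcount and ensuring that the greedy selections of $\mathcal{F}_P$ and $\mathcal{F}_N$ can always be completed, which is guaranteed because $t,K,r_0$ are bounded while $n$ is large.
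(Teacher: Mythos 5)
First, a point of reference: the paper does not prove this lemma at all --- it is imported verbatim from \cite{HZZ_tiling} (Lemma 3.9) --- so your attempt can only be judged against the known proof there. Your overall architecture is the right one and matches that proof in its main lines: reduce everything to within-class reachability plus one cross-part ``bridge'', write the transferral as $\bfu_j-\bfu_l=\sum_i c_i\bfv_i$ over $\mu$-robust vectors, note $K=\sum_{i\in P}c_i=\sum_{i\in N}|c_i|$, select vertex-disjoint collections $\mathcal{F}_P,\mathcal{F}_N$ by robustness, and exploit the identity $\bfu_j+\sum_{i\in N}|c_i|\bfv_i=\bfu_l+\sum_{i\in P}c_i\bfv_i$, which says the two collections (punctured at the bridge vertices) have equal index vectors.

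The genuine gap is in the mismatch-resolution step, which is the heart of the lemma. You propose to repair the fact that $V(\mathcal{F}_P)\setminus\{u^*\}$ and $V(\mathcal{F}_N)\setminus\{v^*\}$ are different vertex sets by adding ``an auxiliary within-$V_l$ reachability swap between $v^*$ and some vertex of $V(\mathcal{F}_P)\cap V_l$, and symmetrically inside $V_j$'' --- i.e.\ two swaps. This cannot work. A reachability witness $W_{v^*w}$ only certifies that $\{v^*\}\cup W_{v^*w}$ and $\{w\}\cup W_{v^*w}$ each span $F$-factors; it does not let you fill the single hole left by $v^*$ inside its copy in $\mathcal{F}_N$. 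Concretely, in the putative factor of $\{u^*\}\cup W$ the block $V(\mathcal{F}_P)$ is used up, and what remains is $V(\mathcal{F}_N)\setminus\{v^*\}$ (of size $Km-1$) plus one or two witness sets of size $i_0m-1$, whose total cardinality is not divisible by $m$, so it cannot be tiled by copies of $F$ at all. What is actually needed is a \emph{part-preserving bijection} $\phi$ from $V(\mathcal{F}_N)\setminus\{v^*\}$ onto $V(\mathcal{F}_P)\setminus\{u^*\}$ (it exists precisely because the index vectors agree), together with pairwise disjoint within-class witnesses $W_w$ for \emph{every one} of the $Km-1$ pairs $(w,\phi(w))$. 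Then $W:=\bigl(V(\mathcal{F}_P)\cup V(\mathcal{F}_N)\setminus\{u^*,v^*\}\bigr)\cup\bigcup_w W_w$ works: $\{u^*\}\cup W$ decomposes into the factor on $V(\mathcal{F}_P)$ plus the blocks $\{w\}\cup W_w$, while $\{v^*\}\cup W$ decomposes into the factor on $V(\mathcal{F}_N)$ plus the blocks $\{\phi(w)\}\cup W_w$. This also corrects your parameters: $i_0'$ comes out of order $Ki_0m$ (multiplicative in $K$ and $i_0$), not $O(K+i_0)$. Two secondary points to make explicit: the coefficients $c_i$, hence $K$, must be bounded uniformly in $n$ (true, since $I^{\mu}_{\cP,F}(H)$ lies in the fixed finite set of $m$-vectors in $\mathbb{Z}^r$ with $r\le r_0$, so one may fix a bounded representation for each possible lattice); and the detour through $u^*,v^*$ is unnecessary --- the bijection argument applies directly to $\{u\}\cup V(\mathcal{F}_N)$ versus $\{v\}\cup V(\mathcal{F}_P)$.
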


The following lemma of Lo and Markstr\"om provides the desired absorbing set when $V(H)$ is closed. % YZ added Lo and Markstr\"om

\begin{lemma}[\cite{LM1}, Lemma 1.1]\label{LM1.1}
Let $m$ and $i$ be positive integers and let $F$ be an $m$-vertex $k$-graph. 
Suppose $1/n\ll \r' \ll \beta, 1/m, 1/i$ and $H$ is an $(F, \beta, i)$-closed $k$-graph of order $n$. 
Then there exists a vertex set $W\subseteq V(H)$ with $|W|\leq \beta n$ such that for any vertex set $U\subseteq V(H)\setminus W$ with $|U|\leq \r' n$ and $|U|\in m \mathbb{Z}$, both $H[W]$ and $H[U\cup W]$ contain $F$-factors.
\end{lemma}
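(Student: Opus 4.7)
The plan is to prove Lemma~\ref{LM1.1} by the probabilistic absorbing method of R\"odl-Ruci\'nski-Szemer\'edi, adapted to the $F$-factor setting in the style of Lo-Markstr\"om. The building blocks are pair-exchangers supplied by closedness, which I will assemble into $m$-tuple absorbers, and a random sample of these will form the absorbing set $W$.

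By $(F,\beta,i)$-closedness, for every ordered pair $(u,v) \in V(H)^2$ with $u \ne v$ there exist at least $\beta n^{im-1}$ $(im-1)$-sets $S$ disjoint from $\{u,v\}$ such that $H[S \cup \{u\}]$ and $H[S \cup \{v\}]$ both contain $F$-factors; call such $S$ a $(u,v)$-\emph{exchanger}. For an ordered $m$-tuple $X = (v_1,\dots,v_m)$ of distinct vertices, define an \emph{$m$-tuple absorber for $X$} to be a set $A := Y \cup \bigcup_{j=1}^m S_j$ where $Y = \{w_1,\dots,w_m\}$ spans a copy of $F$ in $H$, each $S_j$ is a $(v_j,w_j)$-exchanger, and $Y, S_1,\dots,S_m$ are pairwise disjoint and disjoint from $X$. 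Then $H[A]$ has an $F$-factor (one copy per $S_j \cup \{w_j\}$, giving $mi$ copies covering $Y \cup \bigcup_j S_j = A$), and $H[A \cup X]$ has an $F$-factor (one copy per $S_j \cup \{v_j\}$ together with the $F$-copy on $Y$, giving $mi+1$ copies). A direct counting argument then bounds the number of $m$-tuple absorbers for each fixed $X$ from below by $\Theta(n^{im^2})$: each $S_j$ has $\Theta(n^{im-1})$ choices with disjointness constraints costing only $o(n^{im-1})$, and $Y$ has $\Omega(n^m)$ choices because closedness itself forces $\Omega(n^m)$ copies of $F$ in $H$ (extract them from the $F$-factors of the various exchangers $S$).

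Next, sample a random family $\mathcal{F}$ of $m$-tuple absorbers (each recorded as an $im^2$-set together with its designated decomposition into $Y$ and $S_1,\dots,S_m$) by including each candidate independently with probability $p = \Theta(n^{1-im^2})$, calibrated so that $\mathbb{E}|\mathcal{F}| = \Theta(n)$ and $\mathbb{E}[\#\,\text{absorbers for } X \text{ in } \mathcal{F}] = \Theta(n)$ for every $X$. Standard Chernoff and Markov bounds, combined with a union bound over the $O(n^m)$ choices of $X$, yield with positive probability: (i) $|\mathcal{F}| \le \beta n/(2im^2)$; (ii) every $m$-tuple $X$ has $\Omega(n)$ absorbers in $\mathcal{F}$; and (iii) at most $o(|\mathcal{F}|)$ pairs of members of $\mathcal{F}$ intersect. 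Discard one set per intersecting pair to obtain a pairwise-disjoint subfamily $\mathcal{F}'$ retaining (i)-(ii) up to constants, and set $W := \bigcup \mathcal{F}'$. Then $|W| \le im^2 |\mathcal{F}'| \le \beta n$ and $H[W]$ carries an $F$-factor built by concatenating the $F$-factors of the individual absorbers.

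Finally, to absorb a set $U \subseteq V(H) \setminus W$ with $|U| = \ell m \le \rho' n$, partition $U$ arbitrarily into $\ell$ disjoint $m$-tuples $X_1,\dots,X_\ell$ and process them sequentially. At step $t$, there are $\Omega(n)$ absorbers for $X_t$ in $\mathcal{F}'$ and only $\ell \le \rho' n / m$ have been used so far; since $\rho' \ll \beta$, an unused absorber $A_t \in \mathcal{F}'$ for $X_t$ exists. Replace the $F$-factor currently covering $A_t$ (a part of the $F$-factor of $W$) by the $F$-factor of $A_t \cup X_t$; since the $A_t$'s are pairwise disjoint, after $\ell$ steps the modified structure is a bona fide $F$-factor of $H[W \cup U]$. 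The main obstacle is the counting step for the number of $m$-tuple absorbers per $X$, which reduces to showing that (a) one can greedily select $m$ mutually disjoint exchangers, one for each pair $(v_j,w_j)$, out of the $\beta n^{im-1}$ supply at each pair, and (b) $H$ contains $\Omega(n^m)$ copies of $F$ extractable from the $F$-factors provided by closedness; both are handled by straightforward counting identities combined with a union bound to verify the simultaneous disjointness.
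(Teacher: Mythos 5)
Your proposal is correct and follows essentially the same absorbing-method construction as the cited proof in Lo--Markstr\"om [LM1]: $m$-tuple absorbers assembled from one copy of $F$ plus $m$ disjoint reachability sets, a sparse random selection cleaned of intersecting pairs, and sequential absorption of $m$-tuples of $U$. The only step deserving a word of care is the $\Omega(n^m)$ lower bound on the number of copies of $F$: when $i=1$ a single pair $(u,v)$ only yields $\Omega(n^{m-1})$ copies (each containing $u$ or $v$), so one should sum this count over all vertices $u$ before dividing by the multiplicity $m$; this is routine and does not affect the argument.
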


We need another lemma from \cite{LM1}.

\begin{lemma}[\cite{LM1}, Lemma 4.2]\label{LM4.2}
Let $k \ge 2$ be an integer and $F$ be an $m$-vertex $k$-partite $k$-graph. 
Suppose $1/n\ll \a \ll \r$. 
For any $k$-graph $H$ of order $n$, two vertices $x, y \in V(H)$ are $(F, \a, 1)$-reachable to each other if the number of $(k-1)$-sets $S \in N(x) \cap N(y)$ with $|N(S)| \ge \r n$ is at least $\r^2  {n \choose k-1}$.
\end{lemma}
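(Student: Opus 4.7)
The plan is to produce $\a n^{m-1}$ many $(m-1)$-sets $W \subseteq V(H) \setminus \{x, y\}$ such that both $H[\{x\} \cup W]$ and $H[\{y\} \cup W]$ contain spanning copies of $F$; since $|V(F)| = m$, this is exactly the $(F, \a, 1)$-reachability condition. The strategy rests on a simple switching principle: fix any vertex $v^* \in V(F)$. If $\phi : V(F) \to \{x\} \cup W$ is an $F$-embedding with $\phi(v^*) = x$ such that $\phi(e \setminus \{v^*\}) \in N(x) \cap N(y)$ for every edge $e$ of $F$ containing $v^*$, then redefining $\phi(v^*) := y$ still yields an $F$-embedding, so we obtain a spanning copy of $F$ on $\{y\} \cup W$. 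Therefore it suffices to produce many ``switchable'' embeddings of this form.

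Fix a $k$-partite realization $U_1 \sqcup \cdots \sqcup U_k$ of $F$ with $v^* \in U_1$. Let $V := V(H) \setminus \{x, y\}$, and let $\mathcal{S}^*$ be the family of $(k-1)$-subsets $S \subseteq V$ with $S \in N(x) \cap N(y)$ and $|N_H(S)| \ge \r n$; the hypothesis gives $|\mathcal{S}^*| \ge \r^2 \binom{n}{k-1}$ directly. Construct an auxiliary $k$-partite $k$-graph $\hat H$ on parts $V_1, \dots, V_k$, each a disjoint copy of $V$, whose edges are ordered tuples $(v_1, v_2, \dots, v_k) \in V_1 \times \cdots \times V_k$ with $\{v_1, \dots, v_k\} \in E(H)$ and $\{v_2, \dots, v_k\} \in \mathcal{S}^*$. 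A double-count yields $|E(\hat H)| \ge (k-1)! \cdot |\mathcal{S}^*| \cdot (\r n - k - 2) \ge c_1 n^k$ for some $c_1 = c_1(\r, k) > 0$. The supersaturation consequence of Erd\H{o}s's bound~\eqref{eq:erdos} in its partite form (applied to $\hat H$ relative to $F$ with $U_i \to V_i$) yields at least $c_2 n^m$ partite copies of $F$ in $\hat H$, for some $c_2 = c_2(\r, F) > 0$.

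Each such copy is an injection $\phi : V(F) \to \bigsqcup_i V_i$ with $\phi(U_i) \subseteq V_i$ such that for every edge $e = \{u_1, \dots, u_k\}$ of $F$ (with $u_i \in U_i$) we have $\pi(\phi(e)) \in E(H)$ and $\{\pi(\phi(u_2)), \dots, \pi(\phi(u_k))\} \in \mathcal{S}^* \subseteq N(x) \cap N(y)$, where $\pi : \bigsqcup_i V_i \to V$ is the natural projection. Since $v^* \in U_1$, every edge of $F$ through $v^*$ satisfies the hypothesis of the switching principle, so $W := \pi(\phi(V(F) \setminus \{v^*\}))$ is a valid $(m-1)$-set. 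Discarding the $O(n^{m-1})$ copies on which $\pi \circ \phi$ fails to be injective, and noting that each valid $W$ arises from at most $n \cdot (m-1)!$ copies (an $n$-factor for the choice of $\pi(\phi(v^*)) \in V \setminus W$ and $(m-1)!$ for the bijection $V(F) \setminus \{v^*\} \to W$), the number of distinct valid $W$'s is at least $c_2 n^{m-1} / (3 (m-1)!) =: \a n^{m-1}$. The main technical step is the partite supersaturation for $k$-partite hosts, which is a routine consequence of~\eqref{eq:erdos}; the rest of the argument is bookkeeping about multiplicities and degenerate projections.
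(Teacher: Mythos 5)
Your proof is correct. Note that the paper does not prove this lemma at all --- it is quoted verbatim from Lo and Markstr\"om \cite{LM1} --- so there is no internal proof to compare against; your argument (embed $F$ into the auxiliary $k$-partite host whose edges are forced to have their $(k-1)$-set part in $N(x)\cap N(y)$ with large degree, apply partite supersaturation, then switch the image of a fixed vertex $v^*\in U_1$ between $x$ and $y$) is exactly the standard switching-plus-supersaturation argument used in \cite{LM1}, and the partite supersaturation step you defer is precisely Proposition~\ref{supersaturation} of this paper. The multiplicity and degeneracy bookkeeping ($O(n^{m-1})$ degenerate projections, at most $n\cdot(m-1)!$ preimages per $W$) is also handled correctly.
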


\begin{proof}[Proof of Lemma~\ref{lem:absorb}]
Let $k\ge 3$ and $K:=K^{(k)}(a_1,\dots, a_k)$ such that $a_1\le \cdots \le a_k$, $m=a_1+\cdots + a_k$ and $\gcd(K)=1$.
Given $\rho>0$, we select other parameters as follows.
We pick $0<\eta, \r \ll \rho$ such that $(\lfloor 1/\rho \rfloor+1)\rho > 1+(\lfloor 1/\rho \rfloor+1)^2\r$. % YZ defined \eta, \r together and removed \ll 1/m because in my opinion, \r \ll \rho already means \r < \rho / k^k -- this is what we need in (4.1).
When applying Proposition~\ref{supersaturation} with $r_0=\lfloor 1/\rho \rfloor$ and $\eta$, we get $0<\mu\ll\eta$.
When applying Lemma~\ref{LM4.2}, we get $0<\a \ll \r$.
When applying Lemma~\ref{lem:partition} with $\a$, $c={\lfloor}1/\rho{\rfloor}$, and $\delta'=\rho-\r$, we get $\beta>0$.
When applying Lemma~\ref{lattice} with $i_0=2^{{\lfloor}1/\rho{\rfloor}-1}$, $r_0={\lfloor}1/\rho{\rfloor}$, $\e=\rho-2\r$, $\beta$, and $\mu$, we get $\beta'$ and an integer $i_0'$.
Finally, when applying Lemma~\ref{LM1.1} with $\beta'$ and $i_0'$, we get $\r'>0$.

Let $n$ be a sufficiently large integer and let $H$ be a $k$-graph on $n$ vertices such that $\delta_{k-1}(H)\ge \rho n$.
By Proposition~\ref{prop:deg}, we have $\delta_1(H)\ge \rho{n-1 \choose k-1}$.
First, for every $v\in V(H)$, let us bound $|\tilde{N}_{K, \a, 1}(v)|$ from below. % YZ revised.
Given any $(k-1)$-set $S$, we have $|N(S)| \ge \rho n$ because $\delta_{k-1}(H)\ge \rho n$. Then by Lemma \ref{LM4.2}, for any distinct $u, v \in V(H)$, $u \in \tilde{N}_{K, \a, 1}(v)$ if $|N(u) \cap N(v)| \ge \r^2  {n \choose k-1}$. By double counting, we have
\[
{\sum_{S\in N(v)}|N(S)|} < |\tilde{N}_{K, \a, 1}(v)| \cdot |N(v)| + n \cdot \r^2  {n \choose k-1}.
\] 
Note that $|N(v)| \ge \delta_1(H)\ge \rho{n-1 \choose k-1}$. Together with $\r \ll \rho$, we get 
\begin{equation}\label{eq:NK}
|\tilde{N}_{K, \a, 1}(v)| > \rho n - \frac{\r^2n^k}{|N(v)|} \ge (\rho-\r)n.
\end{equation}

Next we claim that every set of ${\lfloor}1/\rho{\rfloor}+1$ vertices in $V(H)$ contains two vertices that are $(K, \a, 1)$-reachable.
Indeed, since $\delta_{1}(H)\ge \rho{n-1 \choose k-1}$, the degree sum of any $\lfloor 1/\rho \rfloor+1$ vertices is at least $({\lfloor}1/\rho{\rfloor}+1)\rho{n-1 \choose k-1}$. By the definition of $\r$, we have 
\[
({\lfloor}1/\rho{\rfloor}+1)\rho{n-1 \choose k-1} > \left(1+{\lfloor 1/\rho \rfloor+1 \choose 2}\r\right){n \choose k-1}.
\] 
By the Inclusion-Exclusion Principle, there exist $u, v \in V(H)$ such that $|N(u) \cap N(v)| \ge \r  {n \choose k-1}$, so they are $(K,\a,1)$-reachable by Lemma~\ref{LM4.2}.

By \eqref{eq:NK} and the above claim, we can apply Lemma \ref{lem:partition} on $H$ with the constants chosen at the beginning of the proof.
So we get a partition $\cP = \{V_1, \dots, V_r\}$ of $V(H)$ such that $r\le \min\{{\lfloor}1/\rho{\rfloor}, 1/(\rho-\r)\} = \lfloor 1/\rho \rfloor$ and for any $i\in [r]$, $|V_i|\ge (\rho-2\r)n$ and $V_i$ is $(K, \beta, 2^{c-1})$-closed in $H$. 

Now we show that $\bfu_i - \bfu_j \in L_{\cP, K}^{\mu}(H)$ for all distinct $i, j\in [r]$.
Without loss of generality, assume $i=1$ and $j=2$. % YZ revised below 
For any $u \in V_1$ and $v \in V_2$, since $\delta_{k-1}(H)\ge \rho n$, $u$ and $v$ are contained in at least ${{n-2} \choose {k-3}}\rho n/(k-2)$ edges. Since there are $\binom{k+r-1}{r-1}$ $k$-vectors, by averaging, there exists a $k$-vector $\bfv$ whose first two coordinates are positive and which is the index vector of at least
\[
\frac1{\binom{k}{2}}\cdot \frac{|V_1||V_2|{{n-2} \choose {k-3}}\rho n/(k-2)}{\binom{k+r-1}{r-1}} > \eta n^k
\]
edges (we divide a factor of $\binom k2$ because an edge may be counted at most $\binom k2$ times because of its intersections with $V_1$ and $V_2$).  Note that we can write $\bfv = \bfv_1+\bfv_2+\dots+\bfv_k $ such that all $\bfv_i\in \{\bfu_1,\dots, \bfu_r\}$. % YZ revised till here
Without loss of generality, assume that $\bfv_1=\bfu_1$ and $\bfv_2=\bfu_2$.

We apply Proposition \ref{supersaturation} with $a_1, a_2,\dots, a_k$ and conclude that there are at least $\mu n^m$ copies of $K$ in $H$ with index vector $\bfv'=a_1\bfu_1+a_2\bfu_2+a_3\bfv_3+\dots+a_k\bfv_k$.
We then apply Proposition \ref{supersaturation} again with $a_2, a_1,\dots, a_k$ (with $a_1, a_2$ exchanged) and conclude that there are at least $\mu n^m$ copies of $K$ in $H$ with index vector $\bfv''=a_2\bfu_1+a_1\bfu_2+ a_3\bfv_3 +\dots+a_k\bfv_k$.
By definition, we have $\bfv', \bfv''\in I_{\cP, K}^{\mu}(H)$ and thus $(a_1-a_2)\bfu_1+(a_2-a_1)\bfu_2 = \bfv' - \bfv'' \in L_{\cP, K}^{\mu}(H)$.
By repeating the arguments for other permutations of $a_1, a_2,\dots, a_k$, we get that $\bfw_i:=(a_{i+1}-a_i)\bfu_1+(a_i-a_{i+1})\bfu_2 \in L_{\cP, K}^{\mu}(H)$ for $i=1,2, \dots, k-1$.
Recall that since $\gcd(K)=1$, there exist $\ell_1, \ell_2, \dots, \ell_{k-1} \in \mathbb{Z}$ such that $\ell_1(a_2-a_1)+\ell_2(a_3-a_2)+\dots+\ell_{k-1}(a_k-a_{k-1})=1$. Hence $\bfu_1 - \bfu_2 = \ell_1\bfw_1 + \ell_2\bfw_2 + \dots + \ell_{k-1}\bfw_{k-1}\in L_{\cP, K}^{\mu}(H)$ and we are done.

Since $\bfu_i - \bfu_j \in L_{\cP, K}^{\mu}(H)$ for all distinct $i, j\in [r]$, by Lemma~\ref{lattice}, we conclude that $V(H)$ is $(K, \beta', i_0')$-closed.
Thus the desired absorbing set is provided by Lemma~\ref{LM1.1}.
\end{proof}

\section{Proof of the Almost Perfect Tiling Lemma}
% YZ changed k\ge 3 to k\ge 2 -- please check if I have done all in the section.
For integers $k\ge 2$ and $1\le a_1\le a_2\le \cdots \le a_k$ with $a_1<a_k$, let $K:=K^{(k)}(a_1,\dots, a_k)$. 
Let $m=|V(K)|=a_1+\cdots +a_k$ and $\sigma(K)=a_1/m$. 
Given an $n$-vertex $k$-graph $H$ such that $n$ is sufficiently large and $\delta_{k-1}(H)\ge (\sigma(K)-o(1)) n$, we will show that either $H$ has an almost perfect $K$-tiling, or $H$ is $\xi$-extremal for some $\xi>0$.
Note that it suffices to consider the case when $a_2=a_3=\cdots=a_k$. % YZ replaced equivalent by sufficient because the other direction is not clear.
In fact, let $K':=K^{(k)}((k-1)a_1, m-a_1, \dots, m-a_1)$ and note that $\sigma(K) = \sigma(K')=a_1/m$.
Moreover, since $K'$ has a perfect $K$-tiling, it suffices to find an almost perfect $K'$-tiling in $H$.

We thus consider $K' :=K^{(k)}(a, b,\dots, b)$ with $a<b$ and call the vertex class of size $a$ \emph{small}, and those of size $b$ \emph{large}. % i.e., $V(K')=A_1\cup \cdots \cup A_k$ with $|A_1|=a$ and $|A_2|=\cdots = |A_k|=b$.
Let $m=a+(k-1)b$.
An \emph{$\a$-deficient $K'$-tiling} of an $n$-vertex $k$-graph $H$ is a $K'$-tiling of $H$ that covers at least $(1-\a)n$ vertices. 
%We will prove the following (stronger) lemma, which implies Lemma~\ref{lemK}.
The following lemma allows a small number of $(k-1)$-subsets of $V(H)$ to have low degree, and may find applications in other problems (\eg, in reduced hypergraphs after we apply the regularity lemma).

\begin{lemma}\label{lemM}
Fix integers $k\ge 2$ and $a<b$, $0<\r\ll 1/m$ and let $K' :=K^{(k)}(a, b,\dots, b)$. For any $\a>0$ and $\xi \ge 5bk^2 \r$, there exist $\e>0$ and an integer $n_0$ such that the following holds. Suppose $H$ is a $k$-graph on $n>n_0$ vertices with $\deg(S)\ge (\sigma(K') - \gamma)n$ for all but at most $\e n^{k-1}$ sets $S\in \binom {V(H)}{k-1}$,
then $H$ has an $\a$-deficient $K'$-tiling or $H$ is $\xi$-extremal.
\end{lemma}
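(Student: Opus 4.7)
The plan is to follow the fractional-homomorphic-tiling approach of Bu{\ss}, H\`an and Schacht~\cite{BHS}: apply the weak hypergraph regularity lemma to $H$, form a reduced $k$-graph inheriting the near-optimal codegree, and then split on whether a suitable fractional $K'$-tiling exists in the reduced graph. Concretely, choose $\epsilon \ll d \ll \gamma, \alpha, \xi, 1/(bk)$ and apply the weak regularity lemma to obtain a partition $V_0 \cup V_1 \cup \cdots \cup V_t$ with $|V_0| \le \epsilon n$ and $|V_j| = L$ for $j \ge 1$. Form the reduced $k$-graph $R$ on $[t]$ whose edges are $\epsilon$-regular $k$-tuples of clusters of density at least $d$. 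The hypothesis that at most $\epsilon n^{k-1}$ of the $(k-1)$-subsets of $V(H)$ have codegree $< (\sigma(K') - \gamma)n$ propagates, via a standard degree-inheritance calculation, to all but $o(t^{k-1})$ of the $(k-1)$-subsets of $[t]$ having codegree $\ge (\sigma(K') - 2\gamma)t$ in $R$. After removing $o(t)$ bad clusters we obtain $R'$ on $t' \ge (1-o(1))t$ vertices with $\delta_{k-1}(R') \ge (\sigma(K') - 3\gamma)t'$.

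Next, set up the fractional $K'$-tiling LP on $R'$: variables $y_e^j \ge 0$ for each edge $e \in E(R')$ and each $j \in e$, where $y_e^j$ is the weight of a fractional copy of $K'$ in the cluster $k$-tuple $e$ with the ``small'' part (of size $a$) placed in cluster $j$. Maximize $Z = \sum_e \sum_{j \in e} y_e^j$ subject to $a\sum_{(e,j):\, j=i} y_e^j + b\sum_{(e,j):\, i \in e,\, j\neq i} y_e^j \le 1$ for every $i \in V(R')$. By LP duality $Z^* = \min \sum_i x_i$ over $x_i \ge 0$ satisfying $a x_j + b(S_e - x_j) \ge 1$ for every $e \in E(R')$ and $j \in e$, where $S_e := \sum_{i \in e} x_i$. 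If $Z^* \ge (1-\alpha/(2m))\,t'/m$, then for each $(e, j)$ with $y_e^j > 0$ embed $\lfloor L y_e^j \rfloor$ vertex-disjoint copies of $K'$ into the $k$-tuple $e$ with small part in $V_j$; the hypergraph counting lemma applied to each $\epsilon$-regular $k$-tuple of density $\ge d$ guarantees enough embeddings, and overall we cover all but $\alpha n$ vertices of $H$, yielding the required $\alpha$-deficient $K'$-tiling.

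If instead $Z^* < (1-\alpha/(2m))\,t'/m$, take an optimal dual $x^*$ and let $\mathcal{A} := \{i : x_i^* \ge 1/a\}$. Then $|\mathcal{A}|/a \le \sum_i x_i^* < (1-\alpha/(2m))\,t'/m$, forcing $|\mathcal{A}| < (a/m)t'$ and $|\mathcal{B}| > (1-a/m)t'$ for $\mathcal{B} := V(R') \setminus \mathcal{A}$. For any edge $e \subseteq \mathcal{B}$, the dual constraint applied to the $j \in e$ maximizing $x_j^*$ gives $b S_e \ge 1 + (b-a)\max_{j \in e} x_j^*$, hence $S_e \ge 1/b$. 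The plan is to replace $x^*$ by a ``purified'' dual $\tilde x \in \{0, 1/a\}^{V(R')}$ of cost at most $(1-\alpha/(4m))\,t'/m$, obtained by iteratively transferring fractional mass $0 < x_i^* < 1/a$ onto a suitable neighbor in the support: the high codegree of $R'$ ensures that such a local exchange preserves feasibility. Then $\tilde{\mathcal{B}} := \{i : \tilde x_i = 0\}$ is edge-free in $R'$ (an edge inside would demand $a\cdot 0 + b\cdot 0 \ge 1$), and lifting $B := \bigcup_{i \in \tilde{\mathcal{B}}} V_i$, padded with dust vertices from $V_0$ and the removed clusters to reach exactly $\lfloor(1-\sigma(K'))n\rfloor$, gives $e_H(B) \le d\binom{n}{k} + o(n^k) \le \xi \binom{|B|}{k}$ by our hierarchy of constants, witnessing that $H$ is $\xi$-extremal.

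The hardest step is the purification of the dual $x^*$ into the $\{0, 1/a\}$-valued $\tilde x$ without inflating the objective beyond $(1-\alpha/(4m))\,t'/m$. Standard LP-vertex arguments do not directly produce such a structured dual; the key is to exploit the near-extremal codegree of $R'$ (which guarantees that ``moving weight'' from one vertex onto a carefully chosen neighbor keeps every dual constraint satisfied), and to control the cumulative loss across all transfers. Tracking the constants so that $\xi \ge 5bk^2\gamma$ suffices, and simultaneously absorbing the regularity error $\epsilon$ and the allowed fraction $\epsilon n^{k-1}$ of low-codegree $(k-1)$-sets in $H$, is the main quantitative challenge of the proof.
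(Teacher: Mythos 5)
Your approach (regularity lemma, then a fractional $K'$-tiling LP on the reduced hypergraph, then LP duality) is genuinely different from the paper's. The paper never invokes LP duality. Instead it runs a bootstrapping argument: it defines a functional $f(\alpha,\xi) = \alpha + \gamma\alpha^2\xi$ on the set $\Gamma$ of ``bad'' parameter pairs, takes a pair $(\alpha_0,\xi_0)$ near the supremum of $f$ on $\Gamma$, and shows that a slightly perturbed pair $(\alpha_0 + (\gamma\alpha_0)^2, \xi_0 - 5d)$ lies outside $\Gamma$. Applying the lemma with these perturbed parameters to the \emph{reduced} hypergraph $R$ yields either extremality (propagated back to $H$, contradiction) or a near-perfect $K'$-tiling of $R$. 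The tiling case is handled by the fractional $\mathrm{hom}(K')$-tiling machinery (Facts~\ref{factFk}, Proposition~\ref{prop:frac}, Claim~\ref{clm:gain}) together with a careful analysis of the uncovered set $U$ and the set $D$ of clusters with high codegree into $U$, partitioning the tiling into $\mathcal K_1, \mathcal K_2, \mathcal K_3$ by how many vertices each copy has in $D$. The improvement constructed there contradicts the choice of $(\alpha_0,\xi_0)$.

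The central gap in your argument is the ``purification'' of the dual $x^*$ into a $\{0,1/a\}$-valued $\tilde x$, which you flag yourself but do not actually carry out. This is not a technicality. Optimal duals of hypergraph fractional-tiling LPs have no analogue of bipartite/K\"onig half-integrality, so there is no structural reason that an optimal dual should be (approximately) rounded to $\{0,1/a\}$. The claim that moving fractional mass $0<x_i^*<1/a$ onto ``a suitable neighbor in the support'' preserves feasibility is unjustified: transferring weight from $i$ to $i'$ strictly decreases $S_e$ for every edge $e$ with $i\in e, i'\notin e$, and the near-extremal codegree of $R'$ gives control over $(k-1)$-set degrees, not over which of the (exponentially many) dual constraints become violated. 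Showing that near-optimality of the fractional LP forces this specific structural form of the dual is essentially equivalent in difficulty to proving the dichotomy of the lemma itself, which is precisely why the paper does not go through the LP at all: the bootstrap lets it assume the conclusion of the lemma on $R$ (for slightly worse parameters) and then argue combinatorially on the resulting integer tiling. Without the purification argument supplied, your proof is incomplete at its decisive step.
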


We will prove Lemma~\ref{lemM} by using the Weak Regularity Lemma for hypergraphs and the so-called fractional homomorphic tilings (introduced  by Bu\ss, H\`an and Schacht~\cite{BHS}). % YZ moved this sentence here

\subsection{Weak Regularity Lemma}

%Following the approach in \cite{HS}, we use the Weak Regularity Lemma, which is a straightforward extension of Szemer\'edi's regularity lemma for graphs \cite{Sze}.

Let $H = (V, E)$ be a $k$-graph and let $A_1, \dots, A_k$ be mutually disjoint non-empty subsets of $V$. We define $e(A_1, \dots, A_k)$ to be the number of edges with one vertex in each $A_i$, $i\in [k]$, and the density of $H$ with respect to ($A_1, \dots, A_k$) as
\[
d(A_1,\dots, A_k) = \frac{e(A_1, \dots, A_k)}{|A_1| \cdots|A_k|}.
\]
Given $\e>0$ and $d\ge 0$, we say a $k$-tuple ($V_1, \dots, V_k$) of mutually disjoint subsets $V_1, \dots, V_k\subseteq V$ is \emph{$(\e, d)$-regular} if
\[
|d(A_1, \dots, A_k) - d|\le \e
\]
for all $k$-tuples of subsets $A_i\subseteq V_i$, $i\in [k]$, satisfying $|A_i|\ge \e |V_i|$. We say ($V_1, \dots, V_k$) is \emph{$\e$-regular} if it is $(\e, d)$-regular for some $d\ge 0$. It is immediate from the definition that in an $(\e, d)$-regular $k$-tuple ($V_1, \dots, V_k$), if $V_i'\subset V_i$ has size $|V_i'| \ge c|V_i|$ for some $c\ge \e$, then ($V_1', \dots, V_k'$) is $(\e/c, d)$-regular.

The Weak Regularity Lemma (for hypergraphs) is a straightforward extension of Szemer\'edi's regularity lemma for graphs \cite{Sze}.

\begin{theorem}[Weak Regularity Lemma]
\label{thmReg}
Given $t_0\ge 0$ and $\e>0$, there exist $T_0 = T_0(t_0, \e)$ and $n_0 = n_0(t_0,\e)$ so that for every $k$-graph $H = (V, E)$ on $n>n_0$ vertices, there exists a partition $V = V_0 \cup V_1 \cup \cdots \cup V_t$ such that
\begin{enumerate}[(i)]
\item $t_0\le t\le T_0$,
\item $|V_1| = |V_2| = \cdots = |V_t|$ and $|V_0|\le \e n$,
\item for all but at most $\e \binom tk$ $k$-subsets $\{i_1,\dots, i_k\} \subset [t]$, the $k$-tuple $(V_{i_1}, \dots, V_{i_k})$ is $\e$-regular.
\end{enumerate}
\end{theorem}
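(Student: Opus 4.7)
The plan is to follow Szemer\'edi's original density-increment strategy, adapted to $k$-uniform hypergraphs with the simpler \emph{weak} notion of regularity (one density per $k$-tuple of parts, rather than a tower of regularities). Concretely, for any partition $\cP = \{V_1,\dots,V_t\}$ of $V(H)$ into parts of equal size, define the \emph{index}
\[
\operatorname{ind}(\cP) \;=\; \frac{1}{t^k}\sum_{\{i_1,\dots,i_k\}\in \binom{[t]}{k}} d(V_{i_1},\dots,V_{i_k})^2,
\]
which clearly lies in $[0,1]$. The whole proof will consist of showing that whenever $\cP$ fails condition (iii), we can refine it into a not-much-larger partition whose index is strictly larger by at least a fixed polynomial in $\e$.

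The first key step is a defect Cauchy--Schwarz statement: if $(V_{i_1},\dots,V_{i_k})$ is not $\e$-regular, witnessed by subsets $A_{i_j}\subseteq V_{i_j}$ with $|A_{i_j}|\ge \e|V_{i_j}|$ and $|d(A_{i_1},\dots,A_{i_k})-d(V_{i_1},\dots,V_{i_k})|>\e$, then partitioning each $V_{i_j}$ into $\{A_{i_j},V_{i_j}\setminus A_{i_j}\}$ and averaging over the resulting $2^k$ sub-bricks gives
\[
\frac{1}{\prod_j|V_{i_j}|}\sum_{\text{sub-bricks}} |B_{i_1}|\cdots|B_{i_k}|\, d(B_{i_1},\dots,B_{i_k})^2 \;\ge\; d(V_{i_1},\dots,V_{i_k})^2 + \e^{2k}.
\]
This is the standard Cauchy--Schwarz identity applied to the indicator tensor of $E(H)$ restricted to $V_{i_1}\times\cdots\times V_{i_k}$.

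The second key step is the Refinement Lemma: if at least $\e\binom{t}{k}$ of the $k$-tuples are not $\e$-regular, then simultaneously refining using, for each irregular $k$-tuple, the witnessing subsets, produces a partition $\cP^*$ with $|\cP^*|\le t\cdot 2^{t^{k-1}}$ and $\operatorname{ind}(\cP^*)\ge \operatorname{ind}(\cP)+\e^{2k+1}/k!$. Iterating this at most $\lceil k!/\e^{2k+1}\rceil$ times, starting from any initial equipartition into $t_0$ parts, terminates with a partition satisfying (iii). This determines $T_0(t_0,\e)$ as a tower of twos of height roughly $1/\e^{O(k)}$, and forces $n_0(t_0,\e)\ge T_0$.

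Finally, to enforce (ii) one performs the usual equalization: fix a common part size $s=\lfloor n/T_0\rfloor$, and for each resulting part in $\cP^*$ discard the at-most-$s$ surplus vertices into a trash class $V_0$, together with any vertices left from $n$ not divisible by the chosen $t$. A straightforward calculation shows $|V_0|\le \e n$ provided $n>n_0$ is sufficiently large, and throwing away at most $\e n$ vertices changes each density by at most $O(\e)$, so (iii) is preserved after relabelling the error parameter. The only mildly delicate point — and the place I would be most careful — is checking that the simultaneous refinement in the middle step does not blow up the part sizes so badly that (ii) becomes unachievable; this is why one allows $T_0$ to be astronomically large compared with $t_0$, and takes $n$ correspondingly large.
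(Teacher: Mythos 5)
The paper itself offers no proof of Theorem~\ref{thmReg}; it is presented as ``a straightforward extension of Szemer\'edi's regularity lemma'' with a citation to~\cite{Sze}, so there is no internal proof to compare against. Your density-increment outline is the right high-level strategy, and the defect Cauchy--Schwarz computation and the iteration count are fine (your exponents $\e^{2k}$ and $\e^{2k+1}/k!$ are weaker than the sharp $\e^{k+2}$ and $\e^{k+3}/k!$, but that only affects the final tower height, not correctness).

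The genuine gap is precisely the point you flag as ``mildly delicate'': deferring equalization to a single final step does not work as you describe. After the refinement iterations the parts $W_1,\dots,W_{t'}$ have wildly unequal sizes (each refinement can split a part into a piece of size $\ge\e|W_i|$ and the complement), and the index $\tfrac{1}{t^k}\sum d^2$ is the wrong functional for a non-equipartition — it must be the weighted $\sum \bigl(\prod_j |V_{i_j}|/n^k\bigr)\, d^2$ for the Cauchy--Schwarz monotonicity to hold. More seriously, chopping each $W_i$ into blocks of a uniform size $s=\lfloor n/T_0\rfloor$ destroys regularity: if $(W_{i_1},\dots,W_{i_k})$ is $\e'$-regular and a block is a $c$-fraction of its parent, the block $k$-tuple is only $\max\{\e'/c,2\e'\}$-regular, and here $c=s/|W_{i_j}|$ can be as small as $\e'$, giving the useless bound $1$. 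There is also no control on the total surplus $< t's$ unless $s$ is tuned to $t'$, and no guarantee that tiny parts $|W_i|<s$ (which are entirely discarded) are few. The standard repair, which is Szemer\'edi's original bookkeeping, is to re-equalize at every iteration: after forming the common refinement, immediately cut all pieces into blocks of a common size chosen so that each block is a constant (independent of $\e$) fraction of its parent, throw the $O(\e n)$ surplus into $V_0$, and absorb the resulting small index loss into the $\Omega(\e^{O(k)})$ increment. This keeps the working partition equitable throughout, so both your index normalization and condition (ii) come for free at the end.
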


The partition given in Theorem \ref{thmReg} is called an \emph{$\e$-regular partition} of $H$. Given an $\e$-regular partition $\Q$ of $H$ and $d\ge 0$, we refer to $ V_i, i\in [t]$ as \emph{clusters} and define the \emph{cluster hypergraph} $R = R(\e,d, \Q)$ with vertex set $[t]$ in which $\{i_1,\dots,i_k\}\subset [t]$ is an edge if and only if $(V_{i_1}, \dots, V_{i_k})$ is $\e$-regular and $d(V_{i_1}, \dots, V_{i_k}) \ge d$.

The following corollary shows that the cluster hypergraph inherits the minimum codegree of the original hypergraph.
The proof is standard and very similar to that of \cite[Proposition 16]{HS} so we omit the proof.
% YZ: do we require k\ge 3? Is $t_0\ge k\e^{-1/3}$ correct? Is e^{1/3} correct? \cite[Proposition 16]{HS} only requires e^{1/2}
\begin{corollary} \cite{HS}
\label{prop16}
%Given $ \e \ll c, d$ and integers $k\ge 2, t_0$ such that $t_0\sqrt\e \ge k$, there exist $T$ and $n_1$ such that the following holds. 
Suppose $1/t_0\ll \e \ll c, d, 1/k$, then there exist $T$ and $n_1$ such that the following holds. 
Let $H$ be a $k$-graph on $n>n_1$ vertices such that $\deg_H(S)\ge c n$ for all but at most $\e n^{k-1}$ $(k-1)$-sets $S$. Then $H$ has an $\e$-regular partition $\Q=\{V_0, V_1, \dots, V_t\}$ with $t_0\le t\le T$ such that in the cluster hypergraph $R = R(\e,d, \Q)$, all but at most $\sqrt \e t^{k-1}$ $(k-1)$-subsets $S$ of $[t]$ satisfy $\deg_{R}(S)\ge (c - d - 5\sqrt\e)t$.
\end{corollary}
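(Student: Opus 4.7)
My plan is to derive this corollary from the Weak Regularity Lemma (Theorem~\ref{thmReg}) by a standard two-stage counting argument. First I would apply Theorem~\ref{thmReg} to $H$ with parameter $\e$ (after possibly shrinking by an absolute constant factor, to absorb constants later) and lower bound $t_0$, obtaining an equitable $\e$-regular partition $\Q=\{V_0,V_1,\dots,V_t\}$ with $t_0\le t\le T$, $|V_0|\le \e n$, each $|V_i|=n_1\approx n/t$, and at most $\e\binom{t}{k}$ non-$\e$-regular $k$-tuples of clusters. I would then form the cluster hypergraph $R=R(\e,d,\Q)$ and call a $(k-1)$-set $S=\{i_1,\dots,i_{k-1}\}\subset[t]$ \emph{bad} if $\deg_R(S)<(c-d-5\sqrt{\e})t$.

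In the first stage I would isolate the $(k-1)$-sets $S$ with many irregular extensions: call $S$ \emph{irregular-heavy} if more than $\sqrt{\e}\, t$ choices of $j\in[t]\setminus S$ make $(V_{i_1},\dots,V_{i_{k-1}},V_j)$ non-$\e$-regular. A direct double-count of incidences between non-regular $k$-tuples and their $(k-1)$-subsets gives $\sum_S f(S)\le k\e\binom{t}{k}$, where $f(S)$ is the number of irregular extensions of $S$, so at most $O(\sqrt{\e})\binom{t}{k-1}$ sets $S$ are irregular-heavy.

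In the second stage, for each bad $S$ that is not irregular-heavy, I would sum $\deg_H(\{v_1,\dots,v_{k-1}\})$ over the $n_1^{k-1}$ ordered tuples with $v_l\in V_{i_l}$. Splitting the sum by the cluster of the $k$-th vertex, the contributions come from: fewer than $(c-d-5\sqrt{\e})t$ good clusters each contributing at most $n_1^k$; at most $\sqrt{\e}\, t$ irregular clusters each contributing at most $n_1^k$; any number of low-density regular clusters each contributing at most $d\,n_1^k$ via the density bound; plus negligible contributions from extensions into $V_0$ (using $|V_0|\le\e n$) and from edges within $\bigcup_l V_{i_l}$ (bounded by $(k-1)n_1^k$, which is small because $n_1\le n\sqrt{\e}/k$ follows from the hypothesis $t_0\sqrt{\e}\ge k$). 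This yields an average $H$-degree of at most $(c-3\sqrt{\e})n$ over these $n_1^{k-1}$ tuples, and a Markov-type comparison with the pointwise bound $\deg_H\le n$ forces at least $\Omega(\sqrt{\e})\,n_1^{k-1}$ of them to satisfy $\deg_H<cn$. Since distinct bad $S$ produce disjoint vertex $(k-1)$-sets and $H$ contains at most $\e n^{k-1}$ low-degree $(k-1)$-sets in total, the number of non-irregular-heavy bad $S$ is at most $O(\sqrt{\e})t^{k-1}$.

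Summing both stages bounds the total number of bad $S$ by $O(\sqrt{\e})t^{k-1}$. The main obstacle will be pinning down the absolute constants in the threshold $\sqrt{\e}\, t$ defining irregular-heavy and in the Markov-style comparison so that the final count fits under $\sqrt{\e}\,t^{k-1}$; I would handle this by applying Theorem~\ref{thmReg} with a sufficiently small constant multiple of $\e$ at the outset, which tightens every $O(\sqrt{\e})$ to just $\sqrt{\e}$ without affecting the $5\sqrt{\e}$ slack in the codegree condition.
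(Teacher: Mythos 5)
The paper does not give a proof of this corollary, deferring to Proposition~16 of~\cite{HS}, so there is no in-paper argument to compare against; your two-stage scheme is the standard way such ``cluster codegree inheritance'' statements are derived and matches what one expects from the cited source. The structure is sound: the double count of $(k-1)$-subsets against non-regular $k$-tuples handles the irregular-heavy $S$, and for the remaining bad $S$ the codegree sum over $V_{i_1}\times\cdots\times V_{i_{k-1}}$ is bounded above by at most $(c-d-5\sqrt{\e})t\cdot n_1^k$ from $R$-edges, at most the irregular-heavy threshold times $n_1^k$ from irregular extensions, at most $d\,n_1^k$ per $\e$-regular low-density extension, plus $O(\e n)\cdot n_1^{k-1}$ from $V_0$ and $O(\sqrt{\e}n)\cdot n_1^{k-1}$ from same-cluster extensions (the last using $n_1\le n/t_0\le n\sqrt{\e}/k$), which, against the pointwise bound $\deg_H\le n$, forces $\Omega(\sqrt{\e})\,n_1^{k-1}$ vertex $(k-1)$-sets with $\deg_H<cn$; disjointness across distinct $S$ and the hypothesis of at most $\e n^{k-1}$ low-codegree sets then give the final bound. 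Your worry about constants is the only real loose end, and your proposed fix is valid: applying Theorem~\ref{thmReg} with $\e/C_0$ for a large constant $C_0$ still yields a partition that is $\e$-regular and a cluster hypergraph $R(\e,d,\Q)$ in which every non-$\e$-regular $k$-tuple is a fortiori non-$(\e/C_0)$-regular, so both the irregular-heavy count and the lower-order terms in the averaging shrink, while the $5\sqrt{\e}$ slack (and hence the gap $\Omega(\sqrt{\e})n$ against $cn$) is expressed in the original $\e$ and is unaffected; one can also simply set the irregular-heavy threshold somewhat below $\sqrt{\e}t$ and make the constants work without shrinking $\e$ at all.
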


\subsection{Fractional hom($K'$)-tiling}
Let $K' :=K^{(k)}(a, b,\dots, b)$ with $a<b$.
To obtain a large $K'$-tiling in the hypergraph $H$, we consider weighted homomorphisms from $K'$ into the cluster hypergraph $R$. For this purpose, we extend a definition that Bu\ss, H\`an and Schacht \cite{BHS} introduced for 3-graphs.\footnote{As noted by a referee, we could also use Farkas' lemma here (see~\cite{Chvatal}).}

\begin{definition}\label{frac}
Given a $k$-graph $H$, a function $h: V(H)\times E(H)\rightarrow [0,1]$ is called a fractional $\mathrm{hom}(K')$-tiling of $H$ if
\begin{enumerate}
\item [(1)] $h(v,e)= 0$ if $v\not\in e$,
\item [(2)] $h(v) = \sum_{e\in E(H)}h(v,e)\le 1$,
\item [(3)] for each $e\in E(H)$ there exists a labeling $e = v_1\cdots v_k$ such that
\[
h(v_1, e)\le \cdots \le h(v_k, e)\,\text{ and }\, \frac{h(v_1, e)}{a_1} \ge \cdots \ge \frac{h(v_k, e)}{a_k}
\]
\end{enumerate}
By $h_{\min}$ we denote the smallest non-zero value of $h(v,e)$ (and set $h_{\min} = \infty$ if $h\equiv 0$).
For $e=v_1\cdots v_k$, we write $h(v_1,\dots, v_k) = (h(v_1, e), \dots, h(v_k, e))$ and $h(e) = \sum_{i\in [k]} h(v_i,e)$.
The sum over all values is the weight $w(h)$ of h
\[
w(h) = \sum_{(v,e)\in V(H)\times E(H)}h(v,e).
\]
\end{definition}
%To ease notation, for $e=v_1\cdots v_k$, we write $h(v_1,\dots, v_k) = (h(v_1, e), \dots, h(v_k, e))$.

Assume that $V(K')=A_1\cup \cdots \cup A_k$ with $|A_1|=a$ and $|A_2|=\cdots = |A_k|=b$, and $m=a+(k-1)b$.

\begin{fact}\label{factFk}
There is a fractional $\mathrm{hom}(K')$-tiling of $K'$ such that $w(h) = m$ and $h_{\min}=b^{1-k}$.
\end{fact}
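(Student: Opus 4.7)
The plan is to exploit the symmetry among the $k-1$ large parts and define $h$ using only two distinct nonzero values. For every edge $e \in E(K')$, let $v_1(e)$ denote the unique vertex of $e$ in the small part $A_1$; I set $h(v_1(e), e) = x$ and $h(u, e) = y$ for each $u \in e \setminus \{v_1(e)\}$, and $h(u, e) = 0$ whenever $u \notin e$. The task then reduces to choosing constants $x, y \in [0,1]$ so that $h$ satisfies the three axioms of Definition~\ref{frac} with $w(h) = m$ and smallest nonzero value $b^{1-k}$.

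Translating each axiom, I expect the following. Axiom (1) is automatic. For (2), each $v \in A_1$ lies in $b^{k-1}$ edges and each $v \in A_i$ with $i \geq 2$ lies in $ab^{k-2}$ edges, yielding the capacity constraints $b^{k-1} x \leq 1$ and $ab^{k-2} y \leq 1$. For (3), labeling each edge so that its $A_1$-vertex comes first and the remaining vertices follow in arbitrary order, the two chains collapse to $x \leq y$ and $x/a \geq y/b$; equivalently $ay/b \leq x \leq y$, which is consistent precisely because $a \leq b$ by hypothesis.

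I would then make the extremal choice $x = b^{1-k}$ and $y = a^{-1} b^{2-k}$, which saturates both capacity bounds simultaneously. The inequality $x \leq y$ amounts to $a \leq b$, while the other chain becomes the equality $bx = ay$, so axiom (3) holds. A direct computation gives
\[
w(h) = |E(K')|\bigl(x + (k-1)y\bigr) = ab^{k-1}\bigl(b^{1-k} + (k-1)a^{-1}b^{2-k}\bigr) = a + (k-1)b = m,
\]
and $h_{\min} = \min(x, y) = b^{1-k}$, again by $a \leq b$. Because the construction is entirely explicit and the verification is a short algebraic check, there is no real obstacle; the only mild surprise is that the same extremal choice simultaneously saturates both capacity constraints from (2) and the ratio constraint from (3), a coincidence that is effectively engineered into the weights $a$ and $b$.
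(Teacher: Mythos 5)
Your proposed weights $x = b^{1-k}$ on the small-part vertex and $y = a^{-1}b^{2-k}$ on each large-part vertex are exactly the weight assignment used in the paper's proof, and your verification is correct. The paper simply states these weights directly and observes that every vertex receives total weight $1$; your derivation of the same weights from the capacity and ratio constraints is a slightly more discursive presentation of the identical construction.
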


\begin{proof}
For each edge $e=v_1\cdots v_k$ of $K'$ with $v_i\in A_i$ for $i\in [k]$, we assign weight
\[
h(v_1,\dots, v_k) = \left(\frac1{b^{k-1}}, \frac{1}{a b^{k-2}}, \dots, \frac{1}{a b^{k-2}}\right).
\]
Then $h(e)= m/ (ab^{k-1})$ and consequently, $w(h)=m$.
\end{proof}

In the rest of the proof, we will refer to the weight assignment in Fact~\ref{factFk} as the \emph{standard weights}.

Let $\widehat{K'}$ be a $k$-graph obtained from $K'$ by adding $k-1$ new vertices and a new edge that consists of the $k-1$ new vertices and a vertex from a large vertex class of $K'$.
%on $A_1\cup \cdots \cup A_k$ with $|A_1|=a$ and $|A_i|=b$ for $2\le i\le k$ and $k-1$ vertices $u_1,\dots, u_{k-1}$ such that there is one edge $e=\{v, u_1,\dots, u_{k-1}\}\in E(L)$ such that $v\in A_2\cup \cdots \cup A_k$. 

\begin{proposition}\label{prop:frac}
There is a fractional $\mathrm{hom}(K')$-tiling of $\widehat{K'}$ such that $w(h)\ge m+1/(a b^{k-1})$ and $h_{\min}= b^{1-k}$.
\end{proposition}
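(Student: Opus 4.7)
The plan is to construct $h$ by modifying the standard fractional $\mathrm{hom}(K')$-tiling of $K'$ from Fact~\ref{factFk}, rerouting weight from one $K'$-edge incident to $v$ onto the new edge $e^*$. The principal obstacle is the interplay between conditions~(2) and~(3): on the standard assignment $h(v) = 1$, so we cannot simply add positive weight at $v$ from $e^*$ without first lowering $h(v,e)$ on some $K'$-edge $e$ containing $v$; however, condition~(3) on any edge forces the $k$ weights to take the form $(w_1, w_2, w_2, \dots, w_2)$ with $w_2 a/b \le w_1 \le w_2$, so lowering only $h(v,e)$ on a single edge is not allowed. The cleanest way around this is to drop one entire $K'$-edge at $v$.

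First I would fix any edge $e_0$ of $K'$ containing $v$, use the standard weights on every other $K'$-edge, and set $h(\cdot, e_0) \equiv 0$. This removes exactly $m/(ab^{k-1})$ from $w(h)$ and in particular frees up $1/(ab^{k-2})$ at $v$. Then on $e^* = \{u_1, \dots, u_{k-1}, v\}$ I would use the labeling $v_1 = v$, $v_{j+1} = u_j$ and assign $h(v_1, e^*) = 1/(ab^{k-2})$ and $h(v_j, e^*) = 1/(a^2 b^{k-3})$ for $j \ge 2$. This restores $h(v) = 1$, and condition~(3) on $e^*$ reduces to $w_1 \le w_2$ (equivalent to $a \le b$) and $w_1/a \ge w_2/b$ (which holds with equality for this choice).

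Next I would verify the remaining requirements. Condition~(1) is immediate, and on the retained $K'$-edges condition~(3) is inherited from Fact~\ref{factFk}. Condition~(2) on each $u_i$ requires $1/(a^2 b^{k-3}) \le 1$, which holds for $a \ge 1$, $b \ge 2$, $k \ge 3$, and on the other vertices of $e_0$ the value of $h$ only decreased. For $h_{\min}$, the value $1/b^{k-1}$ still appears on the small-part vertex of every retained $K'$-edge, and a short comparison using $a < b$ shows that every other nonzero value of $h$ is strictly larger, so $h_{\min} = b^{1-k}$.

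Finally I would compute $w(h)$. The gain on $e^*$ is $w_1 + (k-1) w_2 = m/(a^2 b^{k-2})$, so the net change in total weight is $m(b-a)/(a^2 b^{k-1})$. Since $m \ge a$ and $b - a \ge 1$, this is at least $1/(ab^{k-1})$, giving $w(h) \ge m + 1/(ab^{k-1})$ as required.
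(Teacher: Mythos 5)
Your construction is identical to the paper's: fix an edge $e_1$ of $K'$ through $v$, zero it out, keep the standard weights on the other $K'$-edges, and assign $(1/(ab^{k-2}),\, 1/(a^2b^{k-3}), \dots)$ to the new edge. The verification of conditions (1)--(3), the identification of $h_{\min}$, and the weight computation $w(h) = m + m(b-a)/(a^2 b^{k-1}) \ge m + 1/(ab^{k-1})$ all match the paper's argument, with the proposal merely spelling out some checks the paper leaves implicit.
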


\begin{proof}
Assume $V(K')= A_1\cup \cdots \cup A_k$ with $|A_1|=a$ and $|A_i|=b$ for $2\le i\le k$. 
Let $u_1,\dots, u_{k-1}$ be the vertices of $\widehat{K'}$ not in $V(K')$ and $e= \{v, u_1,\dots, u_{k-1}\}$ be the edge of $\widehat{K'}$ not in $E(K')$, where $v\in A_j$ for some $j>1$.
Fix any edge $e_1=x_1\cdots x_k$ of $K'$ with $x_i\in A_i$ for $i\in [k]$ such that $x_j=v$.
We assign the weight
\[
h(v, u_1,\dots, u_{k-1}) = \left(\frac1{ab^{k-2}}, \frac{1}{a^2 b^{k-3}}, \dots, \frac{1}{a^2 b^{k-3}}\right).
\]
to $e$ and the standard weight to all the edges of $K'$ except for $e_1$. 
Moreover, set $e_1$ as unweighted.
Since $\sum_{e'\in E(K')} h(e') = m - {m}/{(a b^{k-1})}$ and $h(e) = {m}/{(a^2 b^{k-2})}$ , we have $h_{\min}= b^{1-k}$ and
\[
w(h)=m - \frac{m}{a b^{k-1}} + \frac{m}{a^2 b^{k-2}} = m + \frac{1}{a b^{k-1}}\frac{m(b-a)}{a} \ge m + \frac{1}{a b^{k-1}}. \qedhere
\]
%and $h_{\min}= b^{1-k}$. 
\end{proof}

The following proposition says that a fractional hom($K'$)-tiling in the cluster hypergraph can be converted to an integer $K'$-tiling in the original hypergraph. It is almost the same as \cite[Proposition 4.4]{HZZ_tiling}, which covers the $k=3$ case, so we omit its proof.

\begin{proposition}\label{almost.frac}
Suppose $\e, \phi>0$, $d\ge 2\e/\phi$ and $t>0$ is an integer, and $n$ is sufficiently large.
Let $H$ be a $k$-graph on $n$ vertices with an $(\e, t)$-regular partition $\Q$ and a cluster hypergraph $\R=\R(\e, d,\Q)$. Suppose there is a fractional hom$(K')$-tiling $h$ of $\R$ with $h_{\min}\geq \phi$. Then there exists a $K'$-tiling of $H$ that covers at least $\left(1-2b\e/\phi \right) w(h) n/t $ vertices. \qed
\end{proposition}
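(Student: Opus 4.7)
The plan is to follow the standard ``regularity-to-tiling'' conversion used in the proof of Proposition~4.4 of \cite{HZZ_tiling} for the $k=3$ case: the fractional hom$(K')$-tiling $h$ on $\R$ dictates how to carve each cluster of $\Q$ into disjoint subsets, and inside each resulting regular $k$-tuple we greedily pack copies of $K'$.

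First, for every edge $e$ of $\R$ fix the labeling $e = v_1 \cdots v_k$ provided by condition~(3) of Definition~\ref{frac}. For every cluster $V_i$, partition $V_i$ into disjoint subsets $\{W_j^e : e \ni i,\ v_j = i\}$ of sizes $\lfloor h(v_j, e)\, |V_{v_j}|\rfloor$ (plus a negligible remainder). Condition~(2) gives $\sum_{e \ni i} h(i, e) \le 1$, making this partition feasible. Since $h(v_j, e) \ge h_{\min} \ge \phi$ whenever $W_j^e \ne \emptyset$, each $W_j^e$ occupies at least a $\phi$-fraction of $V_{v_j}$, so the slicing observation immediately following the definition of $\e$-regularity shows that each sub-$k$-tuple $(W_1^e, \dots, W_k^e)$ is $(\e/\phi, d_e)$-regular for some $d_e \ge d$. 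The hypothesis $d \ge 2\e/\phi$ then keeps the density safely above the regularity parameter.

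Next, inside each sub-tuple $(W_1^e, \dots, W_k^e)$ greedily embed copies of $K'$, placing the $j$-th vertex class of $K'$ in $W_j^e$. The second ordering in condition~(3) of Definition~\ref{frac} makes $|W_j^e|/a_j \propto h(v_j, e)/a_j$ non-increasing in $j$, so the packing is bottlenecked at $W_k^e$. A standard iteration of the counting and slicing lemmas for regular $k$-tuples produces at least $(1 - 2b\e/\phi)\cdot |W_k^e|/a_k$ vertex-disjoint copies of $K'$ inside the sub-tuple, covering a total of $(1 - 2b\e/\phi)\cdot m\,|W_k^e|/a_k$ vertices. In the applications of interest (the weights from Fact~\ref{factFk} and the weights used in the proof of Proposition~\ref{prop:frac}) the second ordering of condition~(3) holds with equality, giving $m\,h(v_k, e)/a_k = \sum_j h(v_j, e)$, so the coverage from $e$ is at least $(1 - 2b\e/\phi)\sum_j h(v_j, e) \cdot n/t$. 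Summing over $e \in E(\R)$ then yields the claimed bound $(1 - 2b\e/\phi)\, w(h)\, n/t$.

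The main obstacle is the greedy embedding step: one has to verify that after peeling off a copy of $K'$ the remaining sub-tuple is still regular enough that another copy can be found, and quantify the number of vertices left uncovered when the process terminates. This is the usual iterated counting/slicing argument, but the constants must be tracked carefully so that the final loss is exactly $2b\e/\phi$. The $\e/\phi$ factor arises from the single slicing of each $V_{v_j}$ down to $W_j^e$, and the factor $b$ is the size of the largest part of $K'$, which governs the waste at the bottleneck class $W_k^e$.
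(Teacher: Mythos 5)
There is a genuine gap. Your approach of placing the $j$-th vertex class of $K'$ in $W_j^e$ covers at most $m|W_k^e|/a_k$ vertices per edge $e$, and this equals $\sum_j |W_j^e| = (\sum_j h(v_j,e))\cdot n/t$ only when the second chain in condition (3) of Definition~\ref{frac} holds with equality throughout, i.e.\ $h(v_j,e)/a_j$ is constant in $j$. You acknowledge this and claim it suffices ``in the applications of interest,'' but the claim is false: in the final step of the proof of Lemma~\ref{lemM}, the paper applies this proposition (via Claim~\ref{clm:gain}) to a fractional hom$(K')$-tiling that assigns weight $(1,\dots,1)$ to the edges of the matching $\M$. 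For that weight one has $1/a_1 = 1/a > 1/b = 1/a_2$, so the second chain is strict at the first step, and your per-edge coverage $m|W_k^e|/a_k = (m/b)|W_k^e|$ falls short of $\sum_j |W_j^e| = k|W_k^e|$ (since $m = a+(k-1)b < kb$). Moreover, the proposition is stated for arbitrary fractional hom$(K')$-tilings, so a proof must handle the general case regardless of the intended applications.

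The missing idea is a \emph{rotation} of the role of the small class within each sub-tuple $(W_1^e,\dots,W_k^e)$. Writing $w_j = h(v_j,e)$, one should embed $N_i^e$ copies of $K'$ whose small class lies in $W_i^e$ and whose large classes lie in $W_j^e$, $j \ne i$, where
$N_i^e = \frac{(b/m)\sum_j w_j - w_i}{b-a}\cdot\frac{n}{t}$.
The balance constraints $a N_i^e + b\sum_{j\ne i} N_j^e = w_i\, n/t$ then hold with equality in each sub-cluster, so $\sum_i N_i^e = \frac{\sum_j w_j}{m}\cdot\frac{n}{t}$ and the copies cover $(\sum_j w_j)\, n/t$ vertices (before regularity losses) --- which is precisely the per-edge contribution to $w(h)\,n/t$. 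The second chain of condition (3), which amounts to $w_1 \ge (a/b)w_k$, is exactly what guarantees $N_i^e \ge 0$ for every $i$; this is the real content of condition (3), and your proof never uses it in this way. After this further split of each $W_j^e$ into pieces of sizes $a_j N_i^e$, the slicing/counting argument you outline can be run on each sub-sub-tuple (with some care when an $N_i^e$ is small relative to $\e|V_j|$, in which case one may round it to zero at a controlled loss), giving the stated bound.
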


%\begin{proof}
%Let $\R'$ be the subhypergraph of $\R$ consisting of the hyperedges $e=u_1\cdots u_k\in E(\R')$ with $h(u_1,e)$, \dots, $h(u_k,e)\geq h_{\min}$. For each $u\in V(\R')$, let $V_u$ be the corresponding cluster of $H$. Since $\Q$ is an $(\e, t)$-regular partition, all the clusters have size $N$ for some $N \ge (1 - \e) n/t$.
%In each $V_u$ we find disjoint subsets $V_u^e$ of size $h(u,e) N$ for all $e\in E(\R')$ with $u\in e$. Note that every edge $e=u_1\cdots u_k\in E(\R')$ corresponds to an $(\e, d')$-regular $k$-tuple $(V_{u_1}, \dots,V_{u_k})$ for some $d'\ge d$. Hence $(V_{u_1}^e, \dots,V_{u_k}^e)$ is $(b^{k-1}\e, d')$-regular.
%Because of Definition~\ref{frac} (3) and $d\ge 2b^{k-1}\e$, we can apply Proposition \ref{prop13} and obtain a $K'$-tiling covering at least
%\begin{align*}
%\left(1 - \frac{2b}{a}\cdot b^{k-1} \e \right) & h(e) N \ge (1- 2{b^k} \e) h(e) (1 - \e) \frac{n}{t} \ge \left(1 - 3 b^k\e \right)h(e) \frac{n}{t}
%\end{align*}
%vertices of $V_{u_1} \cup \cdots \cup V_{u_k}$, where $h(e)= \sum_{i=1}^k h(u_i,e)$.
%Repeating this to all hyperedges of $\R'$, we obtain a $K'$-tiling that covers at least
%\[
%\sum_{uvw\in E(\R')} \left(1- 3b^k \e \right) h(e) \frac{n}{t} = \left(1- 3b^k \e \right) w(h) \frac{n}{t}
%\]
%vertices of $H$.
%\end{proof}

% M tiling Lemma

\subsection{Proof of the $K'$-tiling Lemma (Lemma \ref{lemM})}

\begin{proposition}\label{propstab} 
For all $0< \rho \le 1/2$ and all $\xi, \beta,\delta,\e>0$ the following holds. Suppose there exists an $n_0$ such that for every $k$-graph $H$ on $n>n_0$ vertices satisfying $\deg(S)\ge \rho n$ for all but at most $\e n^{k-1}$ $(k-1)$-sets $S$, either $H$ has a $\beta$-deficient $K'$-tiling, or $H$ is $\xi$-extremal. 
Then every $k$-graph $H'$ on $n'>n_0$ vertices with $\deg(S)\ge (\rho -\delta) n'$ for all but at most $\e (n')^{k-1}$ $(k-1)$-sets $S$ has a $(\beta + 2\delta m)$-deficient $K'$-tiling or is $\xi$-extremal.
\end{proposition}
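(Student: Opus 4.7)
The plan is to reduce $H'$ to a $k$-graph $H$ satisfying the hypothesized codegree bound $\rho n$ by padding with a small set $A$ of dummy vertices, apply the given supposition to $H$, and transfer each of its two alternative conclusions back to $H'$. Concretely, set $|A|:=\lceil \delta n'/(1-\rho)\rceil$ (so $|A|\le 2\delta n'$ since $\rho\le 1/2$) and form $H$ on $n:=n'+|A|$ vertices with $V(H):=V(H')\cup A$ and $E(H):=E(H')\cup\{e\in\binom{V(H)}{k}: e\cap A\ne\emptyset\}$. The choice of $|A|$ forces $\deg_{H'}(S)+|A|\ge \rho n$ whenever $\deg_{H'}(S)\ge (\rho-\delta)n'$, while any $(k-1)$-set $S$ meeting $A$ trivially satisfies $\deg_H(S)=n-k+1\ge \rho n$; hence at most $\e n'^{k-1}\le \e n^{k-1}$ of the $(k-1)$-sets of $H$ are bad, and the supposition may be applied to $H$.

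If $H$ admits a $\beta$-deficient $K'$-tiling $\mathcal M$, I would delete from $\mathcal M$ every copy of $K'$ that meets $A$. By vertex-disjointness at most $|A|$ copies are removed, so the retained tiling lies inside $H'$ and leaves at most $\beta n+(m-1)|A|\le \beta n'+m|A|\le (\beta+2\delta m)n'$ vertices of $V(H')$ uncovered, yielding the required $(\beta+2\delta m)$-deficient $K'$-tiling of $H'$.

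If instead $H$ is $\xi$-extremal, let $B\subseteq V(H)$ be a witness and set $t:=|B\cap A|$. Since every $k$-subset of $B$ meeting $A$ is an edge of $H$, we have $\binom{|B|-t}{k}\ge (1-\xi)\binom{|B|}{k}$, forcing $t$ to be small; together with the choice of $|A|$ this yields $|B\cap V(H')|\ge \lfloor (1-\sigma(K'))n'\rfloor$. Averaging over uniformly random $\lfloor(1-\sigma(K'))n'\rfloor$-subsets of $B\cap V(H')$ then produces $B''\subseteq V(H')$ of the required size with $e_{H'}(B'')\le \xi\binom{|B''|}{k}$, certifying the $\xi$-extremality of $H'$.

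The main obstacle will be the quantitative accounting in the extremal case: the hypothesis controls $e_H(B)$ against $\binom{|B|}{k}$, while the conclusion demands control against the smaller $\binom{|B''|}{k}$. The saving must come entirely from the structural fact that padded vertices lie in every edge incident to $A$, which forces $|B\cap A|$ to be of order $\xi|B|/k$, and from the slack built into the choice of $|A|$; carefully tracking the floor functions and arranging that the averaging step preserves the same $\xi$ is where the argument is most delicate.
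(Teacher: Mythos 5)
Your overall architecture (pad $V(H')$ with a small set $A$, give $A$ full degree, apply the hypothesis to the enlarged $k$-graph $H$, and transfer each alternative back to $H'$) is exactly the paper's route, and your handling of the tiling alternative is correct. However, your treatment of the extremal alternative has a genuine gap in the step where you claim $t:=|B\cap A|$ is small enough. From $e_H(B)\le \xi\binom{|B|}{k}$ and the fact that every $k$-set of $B$ meeting $A$ is an edge, you correctly deduce $\binom{|B|-t}{k}\ge(1-\xi)\binom{|B|}{k}$, which yields only $t\lesssim \xi|B|/k$. But to conclude $|B\cap V(H')|\ge\lfloor(1-\sigma(K'))n'\rfloor$ you need $t \le |B|-\lfloor(1-\sigma(K'))n'\rfloor\approx(1-\sigma(K'))|A|\approx 2\delta(1-\sigma(K'))n'$, i.e. roughly $\xi\le 2k\delta$. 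Nothing in the statement forces $\xi$ to be small relative to $\delta$, and in the application inside the proof of Lemma~5.1 one has $\xi=\Theta(\gamma)$ while $\delta=\Theta(\gamma^2)$, so $\xi\gg\delta$ and the bound on $t$ is useless. (The rest of your averaging step is fine once $|B\cap V(H')|\ge\lfloor(1-\sigma(K'))n'\rfloor$ is secured: since $e_{H'}(B\cap V(H'))=e_H(B)-\bigl(\binom{|B|}{k}-\binom{|B|-t}{k}\bigr)\le\xi\binom{|B|-t}{k}$, a uniformly random $\lfloor(1-\sigma(K'))n'\rfloor$-subset works.)

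The paper sidesteps the issue entirely by not trying to bound $t$ at all: replace each vertex of $A\cap B$ by a vertex of $V(H')\setminus B$. Since every vertex of $A$ realizes the maximum possible degree inside $B$ (every $(k-1)$-subset of $B$ forms an edge with it), such a swap cannot increase $e(B)$, so after the swap one still has a witness $B$ of the same size with $e(B)\le\xi\binom{|B|}{k}$ and now $B\subseteq V(H')$. Room for the swap needs $|B|\le n'$, i.e.\ roughly $2\delta(1-\sigma(K'))\le\sigma(K')$; this holds for all $\delta<1/(2m)$, and for $\delta\ge 1/(2m)$ the statement is trivial because $(\beta+2\delta m)\ge 1$. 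You should replace your structural bound on $t$ with this swapping argument; the remainder of your proof then goes through.
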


\begin{proof}
Given a $k$-graph $H'$ on $n' > n_0$ vertices with $\deg(S)\ge (\rho-\delta)n'$ for all but at most $\e (n')^{k-1}$ $(k-1)$-sets $S$. By adding a set $A$ of $2\delta n'$ new vertices to $H'$ and adding to $E(H)$ all $k$-subsets of $V(H')\cup A$ that intersect $A$, % YZ revised
we obtain a new hypergraph $H$ on $n = n' + 2\delta n'$ vertices. All $(k-1)$-subsets of $V(H)$ that intersect $A$ have degree $n - k+1$. All but at most $\e (n')^{k-1}$ $(k-1)$-subsets $S$ of $V(H')$ satisfy 
\[
\deg(S)\ge (\rho-\delta)n' + 2\delta n' = \rho n' + \delta n' = \rho n - 2 \rho \delta n' + \delta n' \ge \rho n
\]
because $\rho\le 1/2$. By assumption, either $H$ has a $\beta$-deficient $K'$-tiling, or $H$ is $\xi$-extremal. 
If $H$ has a $\beta$-deficient $K'$-tiling, then by removing all the $K'$-copies that intersect $A$, we obtain a ($\beta + 2\delta m$)-deficient $K'$-tiling of $H'$. Otherwise $H$ is $\xi$-extremal, namely, there exists a set $B\subseteq V(H)$ of $\lfloor(1- \sigma(K'))n\rfloor$ vertices such that $e(B)\le \xi \binom{|B|}{k}$.
We can assume that $A\cap B=\emptyset$ -- otherwise we swap the vertices in $A\cap B$ with the vertices in $V\setminus (A\cup B)$ and $e(B)$ will not increase. %Then we remove $A$ from $H$ and get a sparse set $B$ in $H'$ of order at least $\lfloor(1-\sigma(K'))n\rfloor > \lfloor(1-\sigma(K'))n'\rfloor $. 
By averaging, there exists a subset $B'\subseteq B\subseteq V(H')$ of order exactly $\lfloor(1-\sigma(K'))n'\rfloor$ such that $e(B')\le \xi \binom{|B'|}{k}$.
Thus, $H'$ is $\xi$-extremal.
\end{proof}

Now we are ready to prove Lemma \ref{lemM}.
% proof of M tiling lemma
\begin{proof}[Proof of Lemma \ref{lemM}]
Fix positive integers $a< b$ and $k\ge 2$, and a real number $0<\r \ll 1/m$.  % YZ we don't need k\ge 2 but do need a<b
Let $m:= a + (k-1)b$ and $\sigma:= \sigma(K')= a/m$.
Trivially the lemma works when $\a\ge 1$ or $\xi\ge 1$.
Assume to the contrary, that there exist $\a\in (0,1), \xi\in [5b k^2 \r,1)$ such that for all $\e_0>0$ and integers $n_0$, Lemma \ref{lemM} fails, namely, there is a $k$-graph $H$ on $n> n_0$ vertices which satisfies $\deg(S)\ge (\sigma- \r)n$ for all but at most $\e_0 n^{k-1}$ $(k-1)$-sets $S$ but which does not contain an $\a$-deficient $K'$-tiling and is not $\xi$-extremal.
Let $\Gamma$ be the set of such pairs $(\a, \xi)$. % YZ removed such that Lemma~\ref{lemM} fails.
Define $f(\a, \xi)=\a + \r \a^2 \xi$, and let $f_0$ be the supremum of $f(\a, \xi)$ among all $(a, \xi)\in \Gamma$.

Let $\eta = \r^2 f_0^2/32$.
By the definition of $f_0$, there exists a pair $(\a_0, \xi_0)\in \Gamma$ such that $f_0 - \eta \le f(\a_0, \xi_0)\le f_0$. 
Moreover, since $1+\r \a_0 \xi_0 \le 2$, we have that $\a_0 \ge (f_0 - \eta)/2 \ge f_0/4$.
%Let $d=\min\{(\r \a_0)^{2}/4m, (\xi_0 - 5k m\r)/2\}$ and thus $\xi_0 - 2d \ge 5k m\r$. YZ changed the definition of d.
Let $d= (\r \a_0)^2/(4m)$.
Since $f(\a_0, \xi_0) \ge f_0 - \eta$, we have
\[
f(\a_0 + (\r \a_0)^{2}, \xi_0- 5d) > \a_0 + (\r \a_0)^{2} + \r \a_0^2 (\xi_0 - 5d) \ge f_0 -\eta + \r \a_0^2(\r - 5d) \ge f_0
\]
by $d\le \r^2 \le \r/10$ and $\eta = \r^2 f_0^2/32 \le \r^2 \a_0^2/2$.
This means that $(\a_0 + (\r \a_0)^{2}, \xi_0- 5d)\notin \Gamma$, i.e., there exist $\e_*>0$ and $n_*\in \mathbb N$ such that for every $k$-graph $H$ on $n>n_*$ vertices satisfying $\deg(S)\ge (\sigma- \r)n$ for all but at most $\e_* n^{k-1}$ $(k-1)$-sets $S$, $H$ has an $(\a_0 + (\r \a_0)^{2})$-deficient $K'$-tiling or is $(\xi_0 - 5d)$-extremal. 
Since $\sigma < 1/k \le 1/2$, we can apply Proposition \ref{propstab} with parameters $\delta=2d$ and $\sqrt\e$ and derive that
\begin{itemize}
\item [($\dagger$)]
for every $k$-graph $H'$ on $n>n_*$ vertices satisfying $\deg(S)\ge (\sigma- \r - 2d)n$ for all but at most $\e_* n^{k-1}$ $(k-1)$-sets $S$, either $H'$ has an $(\a_0 + (\r \a_0)^{2} + 4d m)$-deficient $K'$-tiling, or it is $(\xi_0 - 5d)$-extremal. 
\end{itemize}
Let $\e>0$ be such that $\e\le \min\{\e_*^2, d^k, (1/k-\sigma)/3, d/(2b^k)\}$ and $\e\ll c, d, 1/k$ as required by Corollary~\ref{prop16}. 
Let $n_1$ and $T$ be the parameters returned by Corollary \ref{prop16} with inputs $c=\sigma- \r$, $\e$, $d$, $k$ and sufficiently large $t_0$ (in particular, $t_0> n_*$). %Let $n\ge n_1$ be sufficiently large. % YZ replaced $t_{0} = \max \{2n_0, 1/\e\}$ because 1) why 2n_0 2) we need t (and |U|) be sufficiently large, 1/\e may not suffice.

Let $H$ be a $k$-graph on $n\ge n_1$ vertices which satisfies $\deg(S)\ge (\sigma- \r)n$ for all but at most $\e n^{k-1}$ $(k-1)$-sets $S$.
Our goal is to show that either $H$ contains an $\a_0$-deficient $K'$-tiling or $H$ is $\xi_0$-extremal.
This implies that $(\a_0, \xi_0)\notin \Gamma$, contradicting the definition of $(\a_0, \xi_0)$.
Let us apply Corollary \ref{prop16} to $H$ with the constants chosen above and obtain a cluster hypergraph $R = R(\e, d, \Q)$ on $t\ge t_{0}$ vertices such that the number of $(k-1)$-subsets $S\subseteq V(R)$ violating 
\[
\deg_{R}(S)\ge \left(\sigma- \r - d - 5\sqrt\e \right)t \ge \left(\sigma- \r - 2d \right)t
\]
is at most $\sqrt{\e} t^{k-1}\le \e_* t^{k-1}$. 
Let $N$ be the number of vertices in each cluster except $V_0$ and thus $(1-\e)\frac nt \le N \le \frac nt$.

Note that the reduced $k$-graph $R$ satisfies the assumption of ($\dagger$). 
If $R$ is $(\xi_0 - 5d)$-extremal, then there exists a vertex set $B\subseteq V(R)$ of order $\lfloor (1-\sigma)t \rfloor$ such that $e(B)\le (\xi_0 - 5d) \binom{|B|}{k}$. 
Let $B'\subseteq V(H)$ be the union of the clusters in $B$. By regularity, we have
\[
e(B')\le e(B)\cdot N^k + \binom tk \cdot d \cdot N^k + \e \binom tk \cdot N^k + t\binom {N}2 \binom {n-2}{k-2},
\]
where the right-hand side bounds the number of edges from regular $k$-tuples with high density, edges from regular $k$-tuples with low density, edges from irregular $k$-tuples and edges that lie in at most $k-1$ clusters. 
Since $e(B)\le (\xi_0 - 5d) \binom{|B|}{k}$ and $1/t \le \e$, we get
\begin{align*}
e(B')&\le (\xi_0 - 5d) \binom{|B|}{k} N^k + (d+ \e) \binom tk \left(\frac nt \right)^k + t\binom {n/t}2 \binom {n-2}{k-2}\\ 
&\le (\xi_0 - 5d) \binom{|B'|}{k} + (d+ 2\e) \binom{n}{k}.
\end{align*}
Note that
\[
|B'|= \lfloor (1-\sigma)t \rfloor N \ge (1-\sigma)(1- \e)n - N\ge (1- \sigma - 2\e)n.
\]
On the other hand, $|B'|\le (1-\sigma) n$ implies that $|B'|\le \lfloor (1-\sigma)n \rfloor$. 
By adding at most $2\e n$ vertices of $V\setminus B'$ to $B'$, we obtain a subset $B''\subseteq V(H)$ of size exactly $\lfloor (1-\sigma)n \rfloor$, with $e(B'')\le e(B') + 2\e n \binom{n-1}{k-1}\le e(B') + 2k\e \binom nk$. 
Since  $\sigma< 1/k$ and $\e\le (1/k-\sigma)/3$, we have
\begin{align*}
\binom{|B'|}k &\ge \binom{(1- \sigma - 2\e)n}k\ge (1-\sigma - 2\e)^k \binom nk - O(n^{k-1}) \ge \left(1 - \frac1k \right)^k\binom nk.
\end{align*}
Since $(1- 1/k)^k\ge 1/4$ for $k\ge 2$, it follows that $\binom{|B'|}k \ge \frac14 \binom nk$, and consequently, 
\[
e(B'')\le  (\xi_0 - 5d) \binom{|B'|}{k} + (d+ 2\e + 2k\e) \binom{n}{k} \le  \Big(\xi_0 - 5d + 4(d+2\e+ 2k\e)\Big) \binom{|B'|}{k}  \le \xi_0 \binom{|B'|}{k}
\]
by $\e\le d^k$. Hence $H$ is $\xi_0$-extremal, and we are done.

\medskip
We thus assume that $R$ is not $(\xi_0 - 5d)$-extremal. By ($\dagger$), $R$ has an $(\a_0 + (\r \a_0)^{2} + 4d m)$-deficient $K'$-tiling.
Let $\mathcal{K}$ be a largest $K'$-tiling of $R$, and let $U$ be the set of vertices in $R$ not covered by $\mathcal{K}$. Then $|U|\le (\a_0 + (\r \a_0)^{2} + 4d m)t$. 
Let $q:=|\K|$.

Now assume that $H$ contains no $\a_0$-deficient $K'$-tiling. % YZ: this was not stated clearly before.
The following proposition shows that there is no fractional hom$(K')$-tiling of $R$ whose weight is substantially larger than $(1- \a_0)t$.

\begin{claim}\label{clm:gain}
If $h$ is a fractional hom$(K')$-tiling of $R$ with $h_{\min}\geq {b^{1-k}}$, then $w(h)\le (1 - \a_0 + 2b^k\e)t\le q m + 3(\r \a_0)^{2} t$.
\end{claim}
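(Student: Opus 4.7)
The plan is to establish the two inequalities separately: the first via Proposition~\ref{almost.frac} combined with the standing assumption that $H$ has no $\a_0$-deficient $K'$-tiling, and the second via a short arithmetic calculation using the bound $|U|\le (\a_0 + (\r\a_0)^2 + 4dm)t$ and the calibrated choice $d=(\r\a_0)^2/(4m)$ made earlier in the proof of Lemma~\ref{lemM}.

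For the first inequality, I would apply Proposition~\ref{almost.frac} to $R$ with $\phi := b^{1-k}$; the hypothesis $d\ge 2\e/\phi = 2\e b^{k-1}$ holds because $\e \ll d$ and $b$ is constant. The proposition yields a $K'$-tiling of $H$ covering at least $(1 - 2b^k\e)\,w(h)\,n/t$ vertices. Suppose for contradiction that $w(h) > (1 - \a_0 + 2b^k\e)t$. Then the number of vertices covered exceeds
\[
(1 - 2b^k\e)(1 - \a_0 + 2b^k\e)\,n \;=\; \bigl((1-\a_0) + 2b^k\e(\a_0 - 2b^k\e)\bigr)\,n \;>\; (1-\a_0)n,
\]
where the final inequality uses $\a_0 > 2b^k\e$, which is guaranteed by $\e \ll d$ together with $\a_0 \ge f_0/4$. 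But then $H$ contains an $\a_0$-deficient $K'$-tiling, contradicting our standing assumption. Hence $w(h) \le (1 - \a_0 + 2b^k\e)t$.

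For the second inequality, let $q := |\mathcal{K}|$ so that $\mathcal{K}$ covers exactly $qm$ vertices of $R$ and $|U| = t - qm$. The bound $|U|\le (\a_0 + (\r\a_0)^2 + 4dm)t$ rearranges to $qm \ge (1 - \a_0 - (\r\a_0)^2 - 4dm)t$. Substituting $d = (\r\a_0)^2/(4m)$ gives $4dm = (\r\a_0)^2$, hence $qm \ge (1 - \a_0 - 2(\r\a_0)^2)t$. Adding $3(\r\a_0)^2 t$ to both sides and using $2b^k\e \le (\r\a_0)^2$ (again from $\e \ll d$ with $b,m$ constants) yields
\[
qm + 3(\r\a_0)^2 t \;\ge\; (1 - \a_0 + (\r\a_0)^2)t \;\ge\; (1 - \a_0 + 2b^k\e)t,
\]
as required.

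There is no genuine obstacle here: the entire claim is a careful bookkeeping step that ties Proposition~\ref{almost.frac} to the calibrated choices of $d$ and $\e$ made at the beginning of the proof. The only things to verify explicitly are the two inequalities $\a_0 > 2b^k\e$ and $2b^k\e \le (\r\a_0)^2$; both follow immediately from $\e \ll d = (\r\a_0)^2/(4m)$, since $b$ and $m$ are constants and $\a_0$ is bounded away from zero by $\a_0 \ge f_0/4$.
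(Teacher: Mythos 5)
Your proof is correct and follows essentially the same route as the paper: the first inequality is obtained by applying Proposition~\ref{almost.frac} with $\phi = b^{1-k}$ and invoking the standing assumption that $H$ has no $\a_0$-deficient $K'$-tiling, and the second inequality is the same arithmetic rearrangement of $|U|\le (\a_0 + (\r\a_0)^2 + 4dm)t$ with $4dm = (\r\a_0)^2$ together with $2b^k\e \le (\r\a_0)^2$. The only cosmetic difference is that you argue by contradiction while the paper phrases it as a direct contrapositive.
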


\begin{proof}
We first observe that $H$ contains an $\a_0$-deficient $K'$-tiling if there is a fractional hom$(K')$-tiling $h$ of $R$ with $h_{\min}\geq {b^{1-k}}$ and $w(h)\ge (1 - \a_0 + 2b^k\e)t$. This indeed follows from Proposition \ref{almost.frac} because $2b^k \e \le d = (\r \a_0)^2/ (4m) < \a_0$ and
\begin{align*}
\left(1- \frac{2b\e}{b^{1-k}} \right)w(h) \frac nt \geq \left(1-2b^k\e\right) (1 - \a_0 + 2b^k \e)t \frac nt > \left(1-\a_0\right)n.
\end{align*}

It remains to show that $q m + 3(\r \a_0)^{2} t \ge (1 - \a_0 + 2b^k\e)t$. 
Since $|U|\le (\a_0 + (\r \a_0)^{2} + 4d m)t$ and $|U| + qm = t$, we have
\[
q m+3(\r \a_0)^{2}t \ge t - (\a_0 + (\r \a_0)^{2} + 4dm) t+ 3(\r \a_0)^{2} t \ge (1-\a_0+ 2b^k\e)t
\]
because $4dm = (\r \a_0)^2$ and $2b^k \e < (\r \a_0)^2$.
\end{proof}

We claim the following for $|U|$:
\begin{equation}\label{eqU}
|U| \ge \frac{\a_0}{2} t \quad \text{and} \quad  e(U)\le \frac{\r}k \binom{|U|}k. % YZ removed the upper bound |U| \le (1-\sigma + 2\r) t because it is not used. Also add the bound for e(U) here.
\end{equation}
Indeed, if $|U|<{\a_0 t}/{2}$, then by applying Fact \ref{factFk} to each member of $\mathcal{K}$, we obtain a fractional $\mathrm{hom}(K')$-tiling $h$ of $R$ with $h_{\min}=b^{1-k}$ and weight $w(h)=(1-\a_0 /2)t > (1-\a_0+2b^k\e)t$. 
This contradicts Claim~\ref{clm:gain}.  
If $e(U)\ge \r \binom{|U|}k/k$, then since $|U| \ge {\a_0 t}/{2}$ and $t$ is sufficiently large, we can apply \eqref{eq:erdos} to find a copy of $K'$ in $U$, contradicting the maximality of $\mathcal K$.

%On the other hand, if $|U| > (1-\sigma +2 \r) t$, by the degree condition, we know that every $(k-1)$-set in $\binom{U}{k-1}$ with degree at least $(\sigma- \r - 2d)t$ has at least $(\r-2d)t$ neighbors in $U$.
%This implies that 
%\[
%e(U)\ge \frac{(\r-2d) t}k \left(\binom{|U|}{k-1}- \sqrt\e t^{k-1}\right)>\frac{\r}{2k} |U| \left(\binom{|U|}{k-1}- \r\binom{|U|}{k-1}\right)>\frac{\r}{k}\binom{|U|}{k},
%\]
%where we used \eqref{eq:sqrte}, $k\ge 3$ and $\r\ll 1/k$.
%By~\eqref{eq:erdos} and the fact that $t$ is large enough, we find a copy of $K'$ in $U$, contradicting the maximality of $\mathcal K$.
%So the proof of \eqref{eqU} is complete. 
%Note that the argument above also implies that $e(U)\le \r \binom{|U|}k/k$.

Let $D$ be the set of vertices $v\in V(R)\setminus U$ such that $|N(v)\cap \binom{U}{k-1}|\ge \r \binom{|U|}{k-1}$. Let $\mathcal K_1\subseteq \mathcal K$ be the set of copies of $K'$ that contain at least $a+1$ vertices from $D$. Let $\mathcal K_2\subseteq \mathcal K$ be the set of copies of $K'$ that contain exactly $a$ vertices from $D$. Let $\mathcal K_3\subseteq \mathcal K$ be the set of copies of $K'$ that contain at most $a-1$ vertices from $D$. 

% YZ: the original bound \r |U| / k^2 was correct but may not be smaller than \r t/m (when b is large). So I have to reduce the bound. In addition, I don't find a non-trivial upper bound for |U| useful. 
\begin{claim}\label{clm:k-1}
$|\mathcal K_1|< \frac{\r}{m(k-1)^2} |U| < \frac{\r}{m(k-1)^2} t $. 
\end{claim}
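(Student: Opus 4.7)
The plan is to argue by contradiction: suppose $|\mathcal{K}_1| \ge \frac{\r}{m(k-1)^2}|U|$ and derive a fractional $\mathrm{hom}(K')$-tiling of $R$ whose weight contradicts Claim~\ref{clm:gain}. The leverage point is Proposition~\ref{prop:frac}: replacing the standard $K'$-weights on a copy by $\widehat{K'}$-weights gains an extra $1/(ab^{k-1})$ in total weight per augmentation, provided we can attach a new edge $\{v, u_1, \ldots, u_{k-1}\}$ with $v$ in a large class of the copy and $u_1, \ldots, u_{k-1} \in U$.

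First I would observe that for every copy $K^* \in \mathcal{K}_1$, among its at least $a+1$ vertices in $D$ at most $a$ lie in the small class of $K^*$, so some $v(K^*) \in D$ sits in a large class of $K^*$. By definition of $D$, this $v(K^*)$ is contained in at least $\r\binom{|U|}{k-1}$ edges of $R$ of the form $\{v(K^*)\} \cup U'$ with $U' \in \binom{U}{k-1}$, and any such $U'$ is a valid choice for the extra vertices in the $\widehat{K'}$-augmentation rooted at $v(K^*)$.

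Next I would process the copies $K^* \in \mathcal{K}_1$ greedily, selecting for each one a $(k-1)$-subset $U'_{K^*} \subseteq U$ with $\{v(K^*)\}\cup U'_{K^*} \in E(R)$ and disjoint from all previously chosen $U'$'s. After $j$ successful selections the number of blocked candidate $(k-1)$-sets is at most $j(k-1)\binom{|U|-1}{k-2} = \frac{j(k-1)^2}{|U|}\binom{|U|}{k-1}$, so the greedy step still succeeds as long as $j < \r|U|/(k-1)^2$. Since $m \ge 2$, setting $j_0 := \lceil \r|U|/(m(k-1)^2) \rceil$ keeps us safely below this threshold, and $j_0 \le |\mathcal{K}_1|$ holds by the contradiction assumption. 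Combining standard $K'$-weights (Fact~\ref{factFk}) on the untouched copies of $\mathcal{K}$ with $\widehat{K'}$-weights (Proposition~\ref{prop:frac}) on the $j_0$ augmented substructures then produces a fractional $\mathrm{hom}(K')$-tiling $h$ of $R$ with $h_{\min} \ge b^{1-k}$ and
\[
w(h) \;\ge\; qm + \frac{j_0}{ab^{k-1}} \;\ge\; qm + \frac{\r|U|}{ab^{k-1}m(k-1)^2}.
\]

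Finally, Claim~\ref{clm:gain} forces $w(h) \le qm + 3(\r\a_0)^2 t$, so $\frac{\r|U|}{ab^{k-1}m(k-1)^2} \le 3(\r\a_0)^2 t$; plugging in $|U| \ge \a_0 t/2$ from \eqref{eqU} reduces this to $1 \le 6\, ab^{k-1}m(k-1)^2\, \r\a_0$, which contradicts $\r \ll 1$ (with $a,b,k,m$ fixed). The second inequality in the claim is immediate from $|U| \le t$. The step I expect to be the main obstacle is verifying that the $j_0$ local augmentations compose into a single valid fractional $\mathrm{hom}(K')$-tiling on $R$: one must check that the weight reassignment at each $v(K^*)$ (touched by only one augmentation because the copies of $\mathcal{K}$ are pairwise disjoint) keeps vertex weights at most $1$, and that the pairwise disjointness of the $U'_{K^*}$'s ensures each vertex of $U$ receives the $\widehat{K'}$-weight from at most one augmentation, so the resulting $h$ genuinely satisfies all conditions in Definition~\ref{frac}.
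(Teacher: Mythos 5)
Your proof is correct and follows essentially the same argument as the paper: pick a vertex in $D$ from a large class of each copy in a subfamily of $\mathcal K_1$ of size $\lceil \r|U|/(m(k-1)^2)\rceil$, greedily attach disjoint $(k-1)$-sets from $U$ using the definition of $D$ and a counting bound, apply Proposition~\ref{prop:frac} to each augmented copy, and contradict Claim~\ref{clm:gain} via $|U|\ge \a_0 t/2$ and $\r\ll 1$. The paper does not explicitly state the verification you flag at the end (that the local augmentations compose into a valid fractional $\mathrm{hom}(K')$-tiling), but it follows immediately from the pairwise disjointness of the copies in $\mathcal K$ and of the chosen $(k-1)$-sets in $U$, which is exactly the check you describe.
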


\begin{proof}
Suppose $|\mathcal K_1|\ge \frac{\r}{m(k-1)^2} |U|$ instead. Let $K_{1}, \dots K_{{\ell}}$ be distinct members of $\mathcal K_1$, where $\ell= \lceil \frac{\r |U|}{m (k-1)^2} \rceil$. 
For each $j\le \ell$, since $|V(K_j)\cap D|\ge a+1$, we can find a vertex $v_j\in D$ from a large vertex class of $K_j$. Since 
\[
\r \binom{|U|}{k-1} > \frac{\r |U|}{m(k-1)} \binom{|U|-1}{k-2} > (\ell - 1) (k-1)\binom{|U|-1}{k-2}, 
\]
we can greedily pick disjoint $(k-1)$-sets $S_{1}, \dots, S_{\ell}$ from $U$ such that $S_j \in N(v_j)$. 
%$R[V(K_{i_j})\cup U_j]$ contains $L$ as a subhypergraph, which is because 
Thus, for each $j\le \ell$, by Proposition \ref{prop:frac}, we get a fractional $\mathrm{hom}(K')$-tiling $h^j$ of $R[V(K_{j})\cup S_j]$ such that $w(h^j)\ge m+1/(a b^{k-1})$ and $h_{\min}^j\ge b^{1-k}$. 
We assign the standard weight to other members of $\mathcal K$ and thus obtain a fractional $\mathrm{hom}(K')$-tiling $h$ of $R$ with $h_{\min}\ge b^{1-k}$ and weight
\begin{align*}
w(h) \ge q m + \frac{\r |U|}{m(k-1)^2}\frac1{a b^{k-1}} \ge q m + \frac{\r \a_0 t}{2m(k-1)^2 a b^{k-1}} > q m + 3(\r \a_0)^{2} t,
\end{align*}
where we used \eqref{eqU} and that $\r$ is small.
This contradicts Claim~\ref{clm:gain}.
\end{proof}

We now find an upper bound for $|\mathcal K_3|$. %Now consider $\sum_{S\in \binom{U}{k-1}}\deg(S)$. 
First, by the definitions of $\mathcal K_1, \mathcal K_2, \mathcal K_3$ and $e(U)\le \r \binom{|U|}k/k$, we have
\begin{align*}
\sum_{S\in \binom{U}{k-1}}\deg(S) &\le (m|\mathcal K_1|+a|\mathcal K_2|+(a-1)|\mathcal K_3|)\binom{|U|}{k-1}+q m\r \binom{|U|}{k-1}+\r \binom{|U|}k \\
&\le a q \binom{|U|}{k-1} + ((m-a)|\mathcal K_1| - |\mathcal K_3|)\binom{|U|}{k-1}+ \r \binom{|U|}{k-1}t,
\end{align*}
where the second inequality follows from $|\mathcal K_1|+|\mathcal K_2|+|\mathcal K_3|=q$ and $q m+|U|=t$.
Second, the degree condition of $R$ implies that
\begin{align*}
\sum_{S\in \binom{U}{k-1}}\deg(S) &\ge \left( \binom{|U|}{k-1} - \sqrt\e t^{k-1}  \right) \left(\sigma- \r - 2d \right)t. 
\end{align*}
% YZ move the estimate on \sqrt{\e} t^{k-1} here.
Since $|U|\ge {\a_0 t}/{2}$ and ${\e}\le d^k\le (\r \a_0)^{2k}$, we have
\begin{equation*}
\sqrt{\e} t^{k-1} \le {(\r \a_0)^{k}} \left( \frac{2|U|}{\a_0} \right)^{k-1} \le \r \binom{|U|}{k-1}
\end{equation*}
as $\r \le1/(2k)$ and $|U|$ is sufficiently large. It follows that
\begin{align*}
\sum_{S\in \binom{U}{k-1}}\deg(S) \ge \left( \binom{|U|}{k-1} - {\r}\binom{|U|}{k-1} \right) \left( \frac am - (\r + 2d) \right)t \ge \binom{|U|}{k-1} a q - 2\r \binom{|U|}{k-1}t,
\end{align*}
where the last inequality holds because $2d \le \r (m-a)/m$ and $q m\le t$.
Comparing the upper and lower bounds for $\sum_{S\in \binom{U}{k-1}}\deg(S)$ gives
\[
|\mathcal K_3|\le (m-a)|\mathcal K_1| +3\r t.
\]
By Claim~\ref{clm:k-1}, it follows that
\begin{equation}\label{eq:F13}
|\mathcal K_1|+|\mathcal K_3|\le |\mathcal K_1|+(m-a)|\mathcal K_1| +3\r t \le 4\r t.
\end{equation}
Thus we have $|\mathcal K_2|\ge q - 4\r t$.

% YZ remove the defintion of D' Let $D'\subseteq D$ be the set of vertices that lie in copies of $K'$ belonging to $\mathcal K_2$. 
%Let $A=(V(\mathcal K_2)\setminus D')\cup U$.
Let $A$ be the union of $U$ and the vertices covered by $\mathcal K_2$ but not in $D$.
Then %$|A|\ge (m-a)(q-4\r t)$. %Recall that $q m + |U|=t$, so we have
\begin{align}\label{eq:A} % YZ, we now need a lower bound for |A| because of the new way of defining extremal.
|A|&\ge |U| + (m-a)(q-4\r t)  %\ge (m-a)(q-4\r t) + |U| (m-a)/m 
\ge (1-\sigma) t - 4(m-a) \r t \ge \left(1 - \frac1k \right)t
\end{align}
because $\r$ is small. % here need $\r \le \frac{k-1}{4km^2}$. 
We claim that we can find a set $\M$ of $\ell'$ disjoint edges in $A$, where $\ell'=\lceil \frac{\r |U|}{k m^2}\rceil$.
Indeed, by deleting some vertices or adding at most $4(m-a) \r t$ vertices from $V(R)\setminus A$ to $A$, we can obtain a set $A'$ of size exactly $\lfloor(1-\sigma)t\rfloor$.
Since $R$ is not $(\xi_0 - 5d)$-extremal, we have that $e(A')\ge (\xi_0 - 5d) \binom{|A'|}{k} $. 
%Now define $A_0 = A'$ if $|A|>\lfloor(1-\sigma)t\rfloor$ and $A_0 = A$ otherwise.
If $|A|>\lfloor(1-\sigma)t\rfloor$, then as $e(A') \ge (\xi_0 - 5d) \binom{|A'|}{k} > (\ell'-1) k \binom{|A'|-1}{k-1}$, we can find the desierd $\M$ in $A'\subseteq A$.
Otherwise, for $|A|\le \lfloor(1-\sigma)t\rfloor$, by \eqref{eq:A}, it follows that
\begin{align*}
e(A) &\ge e(A') - 4(m-a)\r t\cdot \binom{|A|-1}{k-1}\ge  
(\xi_0 -5d) \binom{|A'|}{k} - 4(m-a)\r \frac{|A|}{1- 1/k}\binom{|A|-1}{k-1}\\
&\ge (\xi_0 - 5d - 4b k^2 \r) \binom{|A|}{k} > \r  \binom{|A|}{k} > (\ell'-1) k \binom{|A|-1}{k-1}
\end{align*}
because $\xi_0\ge 5b k^2\r$ and $5d< \r$. % YZ this is where the definition of \xi_0 comes from. The old definition of d is unnecessary 
%Let $\ell'=\lceil \frac{\r |U|}{k m^2}\rceil$. 
%Since $e(A)> \r \binom{|A|}{k} > (\ell'-1) k \binom{|A|-1}{k-1}$. 
Thus we can greedily find the desired $\M$ in $A$. 

Let $K_{1},\dots, K_{p}$  denote the members of $\mathcal K_2$ that intersect the edges of $\M$, where $p\le k \ell'$. 
%(they may also intersect $U$). 
For each $j\in [p]$, suppose $V(K_{j})\cap D =\{v_{j,1},\dots v_{j, a}\}$. For each $j\in [p]$ and $i\in [a]$, we claim that we can greedily find disjoint copies $K_{j, i}'$ of complete $(k-1)$-partite $(k-1)$-graphs $K^{(k-1)}(b, \dots, b)$ in $N(v_{j,i})\cap \binom{U}{k-1}$. Indeed, during the process, at most $p a (k-1)b+ k \ell'$ vertices of $U$ are occupied and the number of $(k-1)$-subsets of $U$ intersecting these vertices is at most
\[
(p a (k-1)b+ k \ell')\binom{|U| -1}{k-2}\le a m k \ell'\binom{|U| -1}{k-2} \le \frac{k-1}k \r  \binom{|U|}{k-1}
\]
because $\ell'\le \r |U|/(k m^2) +1$, $ak< m$, and $|U|$ is sufficiently large. Thus, since
\[
\left|N(v_{j,i})\cap \binom{U}{k-1}\right| - \frac{k-1}{k}\r \binom{|U|}{k-1} \ge \r\binom{|U|}{k-1} -  \frac{k-1}{k}\r  \binom{|U|}{k-1}= \frac{\r}k \binom{|U|}{k-1},
\] 
we can apply \eqref{eq:erdos} to find the desired $K_{j, i}'$ for all $v_{j, i}$.

For each $j\in [p]$ and $i\in [a]$, $R[V(K_{j, i}')\cup\{v_{j, i}\}]$ spans a copy of $K^{(k)}(1, b, \dots, b)$. 
We now assign the weight $(1/b^{k-1}, 1/(a b^{k-2}), \dots, 1/(ab^{k-2}))$ to each edge of this $K^{(k)}(1, b, \dots, b)$ such that the weight of $v_{j, i}$ is $1/b^{k-1}$. %and the weight of other vertices in $ 1/(a b^{k-2})$
The total weight of $R[V(K_{j, i}')\cup\{v_{j, i}\}]$ is thus $1 + b(k-1)/a = m/a$. 
Next we assign the standard weight to each member of $\mathcal K\setminus \{K_{1},\dots, K_{p}\}$. 
Finally, we assign weight $(1,\dots, 1)$ to all the edges of $\M$. This gives a fractional hom($K'$)-tiling $h$ with $h_{\min}=b^{1-k}$ and weight
\[
w(h) = p a\frac{m}{a} + (q-p)m + k \ell' \ge q m + \frac{\r |U|}{m^2} \ge q m + \frac{\r \a_0 t}{2m^2} > q m + 3(\r \a_0)^2 t
\]
where we used \eqref{eqU} and that the assumption $\r$ is small.
This contradicts Claim~\ref{clm:gain} and completes our proof.
\end{proof}

\section{The Extremal Case}

In this section we prove Theorems~\ref{lemE},~\ref{lemE3} and~\ref{lemE2}. 
We first give some notation.
Given two disjoint vertex sets $X$ and $Y$ and two integers $i, j\ge 0$, a set $S\subset X\cup Y$ is called an $X^i Y^j$-set if $|S\cap X|= i$ and $|S\cap Y| =j$. When $X, Y$ are two disjoint subsets of $V(H)$ and $i+j=k$, we denote by $H(X^i Y^j)$ the family of all edges of $H$ that are $X^i Y^j$-sets, and let $e_{H}(X^i Y^{j})= |H(X^i Y^j)|$ (the subscript may be omitted if it is clear from the context).
We use $\overline{e}_{H}(X^i Y^{k-i})$ to denote the number of non-edges among $X^i Y^{k-i}$-sets. Given a set $L\subseteq X\cup Y$ with $|L\cap X|=l_1\le i$ and $|L\cap Y|=l_2\le k-i$, we define $ \deg(L, X^i Y^{k-i})$ as the number of edges in $H(X^i Y^{k-i})$ that contain $L$, and
$\overline \deg(L, X^i Y^{k-i})=\binom{|X|-l_1}{i-l_1} \binom{|Y|-l_2}{k-i-l_2} - \deg(L, X^i Y^{k-i})$. Our earlier notation $\deg(S, R)$ may be viewed as $\deg(S, S^{|S|} (R\setminus S)^{k- |S|})$.

\subsection{Tools and a general setup}
The following lemma deals with a special (ideal) case of Theorem~\ref{lemE}.
We postpone its proof to the end of this section.

\begin{lemma}\label{lem3}
Let $K:=K^{(k)}(a_1, \dots, a_k)$ with $m:=a_1+\cdots+a_k$.
Suppose  $1/n\ll \rho \ll 1/m$ and $n\in m\mathbb{Z}$. 
Suppose $H$ is a $k$-graph on $n$ vertices with a partition of $V(H)=X\cup Y$ such that $a_1|Y|= (m-a_1)|X|$. Furthermore, assume that
\begin{itemize}
\item for every vertex $v\in X$, $\overline \deg(v, Y)\le \rho\binom{|Y|}{k-1}$,
\item for every vertex $u\in Y$, $\overline{\deg}(u, XY^{k-1})\le \rho \binom{|Y|}{k-1}$. 
\end{itemize}
Then $H$ contains a $K$-factor.
\end{lemma}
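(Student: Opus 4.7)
The plan is to produce a $K$-factor by a three-stage procedure, exploiting the near-completeness of the cross-structure between $X$ and $Y$.

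First, I would uniformly at random partition $Y$ into $k-1$ parts $Y_2, \ldots, Y_k$ with $|Y_j| = a_j n/m$, setting $V_1 := X$ and $V_j := Y_j$. A standard Chernoff plus union-bound argument shows that with positive probability the assumed non-degree property localizes: for each $j \in [k]$ and each $v \in V_j$, the number of transversal tuples $(u_i)_{i \ne j}$, $u_i \in V_i$, with $\{v\} \cup \{u_i\}_{i \ne j} \notin E(H)$ is at most $2\rho \prod_{i \ne j} |V_i|$. (The expected count is $\rho \prod_{i \ne j} |V_i|$ since a fixed bad $(k-1)$-subset of $Y$ becomes a transversal with probability $\prod_{j \ge 2}(a_j/(m-a_1))\cdot(k-1)!$.) Fix such a partition; it suffices to find a $K^{(k)}(a_1, \ldots, a_k)$-factor of the $k$-partite $k$-graph $H[V_1 \cup \cdots \cup V_k]$ in which each copy contains $a_j$ vertices from $V_j$.

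Next, I would reserve random ``buffers'' $R_j \subseteq V_j$ of size $\approx \rho^{1/k} |V_j|$ (chosen so $|R_j|/a_j$ is constant in $j$, $m \mid |R|$, and the localized non-degree bound persists on $V_j \setminus R_j$). Setting $R := \bigcup_j R_j$, iteratively construct $K$-copies in $V \setminus R$ as follows: at each iteration, pick uncovered vertices $A_1 \subseteq V_1 \setminus R_1$ of size $a_1$ arbitrarily; then for $j = 2, \ldots, k$ in turn, pick an uncovered $a_j$-subset $A_j \subseteq V_j \setminus R_j$ avoiding the ``forbidden'' choices (those creating a transversal non-edge with $A_1 \cup \cdots \cup A_{j-1}$ together with any completion from the uncovered parts of $V_{j+1}, \ldots, V_k$). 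A direct counting argument using the localized non-degree bound shows the forbidden $a_j$-sets number only an $O(\rho)$ fraction of all $a_j$-subsets, so such an $A_j$ always exists. Continue until the uncovered fraction of some $V_j \setminus R_j$ shrinks to $\rho^{1/k}$; by the proportional sizes this happens essentially simultaneously across all parts, leaving a leftover $L$ with $|L_j|/a_j$ constant in $j$.

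Finally, inside $H[L \cup R]$ (of size $\Theta(\rho^{1/k} n)$, with parts in ratio $a_1 : \cdots : a_k$), each vertex still has small \emph{relative} non-degree: the absolute bound $\rho \binom{|Y|}{k-1}$ is at most an $O(\rho^{1/k})$ fraction of the transversal count inside $L \cup R$. Applying the same procedure to $H[L \cup R]$ (and iterating, if necessary, until the remaining sub-hypergraph has constant size and can be tiled directly) completes the $K$-factor. The main technical obstacle is calibrating the buffer fraction $\rho^{1/k}$: it must be large enough that the absorbing stage still enjoys a small relative non-degree, yet small enough that the greedy phase covers nearly all vertices. Secondary issues — maintaining $a_j \mid |L_j \cup R_j|$ throughout and guaranteeing the greedy never gets stuck — are handled by careful bookkeeping and the counting argument at each stage.
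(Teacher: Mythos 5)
Your recursive greedy-plus-buffer strategy has a quantitative obstruction that prevents the recursion from ever closing. The non-degree bound in the hypothesis is \emph{absolute}: $\overline\deg(v,Y)\le\rho\binom{|Y|}{k-1}$, a fixed number of forbidden $(k-1)$-sets. When you restrict attention to a subset $Y'\subseteq Y$ with $|Y'|=\mu|Y|$, the worst case (and the greedy leftover $L$ is adversarial, not random, so you must plan for the worst case) is that all of $v$'s non-neighbors survive in $Y'$, so the \emph{relative} non-degree in $Y'$ is about $\rho\binom{|Y|}{k-1}/\binom{\mu|Y|}{k-1}\approx\rho/\mu^{k-1}$. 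If $\mu_i$ is the shrinkage factor at stage $i$, then after $I$ stages the residual has size $n\prod_{i\le I}\mu_i$ and relative non-degree about $\rho/\bigl(\prod_{i\le I}\mu_i\bigr)^{k-1}$. These two quantities are rigidly linked: keeping the relative non-degree below any fixed constant forces $\prod_i\mu_i\gtrsim\rho^{1/(k-1)}$, so the residual can never shrink below a $\rho^{1/(k-1)}$-fraction of $n$. With your choice $\mu=\rho^{1/k}$ this shows up concretely as the relative non-degree climbing $\rho\to\rho^{1/k}\to\rho^{1/k^2}\to\cdots\to 1$ while the size stagnates near $\rho^{1/(k-1)}n$. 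The phrase ``until the remaining sub-hypergraph has constant size and can be tiled directly'' is therefore unreachable; and even if it were, a constant-size $k$-partite $k$-graph with small relative non-degree need not contain a single copy of $K$ (the handful of missing transversals could all land there), so ``tiled directly'' also has no support.

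What is missing is an absorbing mechanism chosen \emph{before} the greedy phase, not afterwards. The paper's proof does exactly this: it passes to the auxiliary $(k-1)$-graph $\mathcal G$ on $Y$ of $(k-1)$-sets with small non-degree into $X$, shows $\mathcal G$ is near-complete, selects up front a random family $\mathcal F'$ of disjoint good $(m-a_1)$-cliques of $\mathcal G$ such that \emph{every} $x\in X$ is ``suitable'' for many members of $\mathcal F'$ (this is the absorbing reservoir), tiles the remaining vertices of $Y$ into good $(m-a_1)$-cliques in one shot via the Lu--Sz\'ekely matching theorem rather than by greedy, and finally assigns $a_1$ vertices of $X$ to each clique through a bipartite matching argument in which the few low-degree vertices of $X$ are first absorbed by $\mathcal F'$ and the rest handled by Hall's theorem. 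The random cross-partition $Y=Y_2\cup\cdots\cup Y_k$ you introduce at the start is harmless but unnecessary in that scheme (the paper works with unrestricted $(m-a_1)$-subsets of $Y$). To repair your argument you would need, at minimum, to replace the buffers $R_j$ by genuine absorbers -- structures that can swallow an arbitrary small remainder -- and to replace the bottom of the recursion by a matching theorem, at which point you have essentially reconstructed the paper's proof.
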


We use the following simple fact in the proof.

%\begin{fact}\label{lemEX}
%%Let $t, m\in \mathbb{N}$, $\e>0$ and let $n$ be a sufficiently large integer.
%Let $t, m, n\in \mathbb{N}$ and let $\e>0$.
%Let $F$ be an $m$-vertex $k$-partite $k$-graph.
%Let $H$ be an $n$-vertex $k$-graph with maximum vertex degree $\e \binom{n}{k-1}$ and $e(H)> (t-1)m \e \binom{n}{k-1} + {\rm ex}(n', F)$, where $n'=n-(t-1)m$.
%Then $H$ contains an $F$-tiling of size $t$.
%\end{fact}
%
%\begin{proof}
%Assume to the contrary, that the largest $F$-tiling $M$ in $H$ has size at most $t-1$.
%Let $V'$ be a set of $(t-1)m$ vertices containing $V(M)$.
%Thus, $V(H)\setminus V'$ spans no copy of $F$ and thus spans at most ${\rm ex}(n', F)$ edges.
%So we have $e(H)\le (t-1)m\e \binom{n}{k-1} + {\rm ex}(n', F)$, a contradiction.
%\end{proof}
\begin{fact}\label{lemEX}
%Let $t, m\in \mathbb{N}$, $\e>0$ and let $n$ be a sufficiently large integer.
Let $t, m, n\in \mathbb{N}$ and $F$ be an $m$-vertex $k$-partite $k$-graph.
Let $H$ be an $n$-vertex $k$-graph with maximum vertex degree $\Delta$ and $e(H)> (t-1)m \Delta + {\rm ex}(n', F)$, where $n'=n-(t-1)m$.
Then $H$ contains an $F$-tiling of size $t$.
\end{fact}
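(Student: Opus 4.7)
The plan is to argue by contradiction. Suppose $H$ does not contain a $K$-tiling of size $t$, and let $\mathcal{M}$ be a $K$-tiling in $H$ of maximum size, so $|\mathcal{M}| \le t-1$. Set $S := V(\mathcal{M})$, so $|S| = m|\mathcal{M}| \le (t-1)m$. By the maximality of $\mathcal{M}$, the induced subgraph $H\setminus S$ contains no copy of $K$ at all (otherwise a new vertex-disjoint copy could be appended to $\mathcal{M}$).

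The key step is to pad $S$ up to size \emph{exactly} $(t-1)m$: pick any set $S' \supseteq S$ with $|S'| = (t-1)m$ (which is possible because the hypothesis $e(H) > (t-1)m\e\binom{n}{k-1}$ forces $n \ge (t-1)mk \ge tm$ via $e(H)\le n\e\binom{n}{k-1}/k$; otherwise the conclusion is vacuous anyway). Since $H\setminus S' \subseteq H\setminus S$, the graph $H\setminus S'$ is also $K$-free, and it lives on exactly $n - (t-1)m = n'$ vertices, so by the definition of the Tur\'an number we have $e(H\setminus S') \le \mathrm{ex}(n',K)$. Meanwhile, the number of edges of $H$ that meet $S'$ is at most $|S'|\cdot\Delta_1(H) \le (t-1)m\cdot \e\binom{n}{k-1}$. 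Summing the two contributions gives
\[
e(H) \le (t-1)m\,\e\binom{n}{k-1} + \mathrm{ex}(n',K),
\]
which contradicts the hypothesis and completes the proof.

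The only mild obstacle is noticing that a naive greedy removal, in which one peels off copies of $K$ one at a time and demands that after $i$ deletions the residual graph still has more than $\mathrm{ex}(n-im,K)$ edges, runs into a monotonicity issue: at intermediate steps $i<t-1$ one has $\mathrm{ex}(n-im,K) \ge \mathrm{ex}(n',K)$, and the hypothesis is not quite strong enough to outweigh this. Padding the vertex set covered by a \emph{maximum} tiling to exactly $(t-1)m$ vertices sidesteps the issue entirely, since it applies the Tur\'an bound at precisely the size $n'$ appearing in the hypothesis.
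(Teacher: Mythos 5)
Your proof is correct and follows essentially the same approach as the paper: take a maximum $K$-tiling, pad its vertex set to exactly $(t-1)m$ vertices, observe that the complement is $K$-free on $n'$ vertices, and compare edge counts. The extra paragraph explaining why a naive one-copy-at-a-time greedy removal would run into a monotonicity issue with the Tur\'an number is a reasonable aside, but the core argument is identical to the paper's.
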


\begin{proof}
Assume to the contrary, that the largest $F$-tiling $M$ in $H$ has size at most $t-1$.
Let $V'$ be a set of $(t-1)m$ vertices containing $V(M)$.
Thus, $V(H)\setminus V'$ spans no copy of $F$ and thus spans at most ${\rm ex}(n', F)$ edges.
So we have $e(H)\le (t-1)m \Delta + {\rm ex}(n', F)$, a contradiction.
\end{proof}

Now we start with a general setup and prove some estimates. 
Assume that $k\ge 3$, $a_1\le \cdots\le a_k$ and $m=a_1+\cdots +a_k$.
Suppose $1/n \ll \xi\ll 1/m$ such that $n\in m \mathbb{N}$. Let $H$ be a $k$-graph on $V$ of $n$ vertices such that $\delta_{k-1}(H) \ge \frac{a_1}{m}n$. Furthermore, assume that there is a set $B\subseteq V(H)$, such that $|B| = \frac{m-a_1}{m}n$ and $e(B)\le \xi \binom{|B|}{k}$. Let $A=V\setminus B$. Then $|A|= \frac{a_1}{m}n$.
%For the convenience of later calculations, we let $\e_0 =2 (\frac{m}{m-a_1})^k\xi \ll 1$ and thus
%\begin{equation}\label{eqB}
%e(B)\le \xi \binom{n}{k} = \frac{\e_0}2 \left(1-\frac{a_1}{m} \right)^{k} \binom{n}{k} \le \e_0 \binom {|B|}k.
%\end{equation}
%In addition, assume that $e(B)$ is the smallest among all partitions $V(H)=A\cup B$ such that $|B| = \frac{m-a_1}{m}n$ and $e(B)\le \xi \binom {|B|}k$. 
Note that we only require $\delta_{k-1}(H)\ge \frac{a_1}{m}n$, so that we can use the estimates in all proofs.

Let $\e_1={\xi}^{1/7} $, $\e_2=\xi^{1/3}$ and $\e_3=2\xi^{2/3}$. 
We now define
\begin{align*}
&A':=\left\{ v\in V: \overline{\deg} (v,B)\le \e_2 \binom{|B|}{k-1} \right\}, \\
&B':=\left\{ v\in V: \deg (v,B)\le \e_1\binom{|B|}{k-1} \right\},\, V_0:=V\setminus(A'\cup B').
\end{align*}

\begin{claim}\label{clm:size}
$\{|A\setminus A'|, |B\setminus  B'|, |A'\setminus  A|, |B'\setminus  B|\}\le \e_3|B|$ and $|V_0|\le 2\e_3|B|$.
\end{claim}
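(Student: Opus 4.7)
The plan is to extract all five bounds from two simple double-counting identities. The first uses only $\xi$-extremality: $\sum_{v\in B}\deg(v,B) = k\,e(B) \le k\xi\binom{|B|}{k} = \xi|B|\binom{|B|-1}{k-1}$. The second uses the codegree condition crucially: for every $(k-1)$-set $S\in\binom{B}{k-1}$, the bound $\deg(S,AB^{k-1}) + \deg(S,B^k) \ge \delta_{k-1}(H) \ge a_1 n/m = |A|$ gives $\overline{\deg}(S,AB^{k-1}) \le \deg(S,B^k)$, and summing over $S$ yields $\overline{e}(A,B^{k-1}) \le k\,e(B) \le \xi|B|\binom{|B|-1}{k-1}$.

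I apply the first estimate in two ways. For $|B\setminus B'|$, each such $v$ contributes more than $\e_1\binom{|B|}{k-1}$ to the left-hand side, so $|B\setminus B'| \le \xi|B|/\e_1$. For $|A'\setminus A|=|A'\cap B|$, each such vertex $v$ has $\overline{\deg}(v,B)\le\e_2\binom{|B|}{k-1}$, hence $\deg(v,B)\ge \binom{|B|-1}{k-1} - \e_2\binom{|B|}{k-1} \ge \tfrac12\binom{|B|-1}{k-1}$ for $|B|$ large and $\e_2$ small; thus $|A'\cap B|\le 2\xi|B|$.

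Symmetrically I apply the second estimate in two ways. For $|A\setminus A'|$, each contributing $v\in A$ has $\overline{\deg}(v,B)>\e_2\binom{|B|}{k-1}$, so $|A\setminus A'|\le \xi|B|/\e_2$. For $|B'\setminus B|=|B'\cap A|$, each such $v$ satisfies $\overline{\deg}(v,B)\ge (1-\e_1)\binom{|B|}{k-1} \ge \tfrac12\binom{|B|}{k-1}$, yielding $|B'\cap A|\le 2\xi|B|$. Each of the four upper bounds is at most $\e_3|B| = 2\xi^{2/3}|B|$, since with the choices $\e_1=\xi^{1/7}$, $\e_2=\xi^{1/3}$ one has $\xi/\e_1 = \xi^{6/7}$, $\xi/\e_2 = \xi^{2/3}$, and $2\xi$ are each at most $2\xi^{2/3}$ once $\xi$ is small.

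For the last statement, the set equality $V_0 = (V\setminus A')\cap (V\setminus B')$ gives $V_0\cap A\subseteq A\setminus A'$ (a vertex of $V_0\cap A$ is in $A$ but not in $A'$) and likewise $V_0\cap B\subseteq B\setminus B'$, so $|V_0|\le |A\setminus A'| + |B\setminus B'| \le 2\e_3|B|$. There is no real obstacle here: the only delicate point is ensuring the hierarchy $\xi\ll\e_1\ll\e_2\ll\e_3$ is calibrated so that the three quotients $\xi/\e_1$, $\xi/\e_2$, $2\xi$ all sit beneath $\e_3$, which is exactly why the authors chose the exponents $1/7$, $1/3$, $2/3$ at the outset.
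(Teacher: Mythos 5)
Your proof is correct and follows the same underlying double-counting strategy as the paper. The clean intermediate bound $\overline{e}(AB^{k-1}) \le k\,e(B)$, obtained by applying the codegree condition $\deg(S)\ge |A|$ pointwise over $S\in\binom{B}{k-1}$ and summing, is a slightly more direct packaging of what the paper establishes by a pigeonhole contradiction on $\sum_S \overline{\deg}(S)$; likewise, you bound $|A'\setminus A|$ and $|B'\setminus B|$ by direct counting where the paper simply observes the set inclusions $A'\cap B\subseteq B\setminus B'$ and $A\cap B'\subseteq A\setminus A'$ — both work. One minor slip: the hierarchy should read $\xi\ll\e_3\ll\e_2\ll\e_1$ (for small $\xi$, $\e_1=\xi^{1/7}$ is the largest), not $\xi\ll\e_1\ll\e_2\ll\e_3$; your actual numerical checks that $\xi/\e_1=\xi^{6/7}$, $\xi/\e_2=\xi^{2/3}$, and $2\xi$ all lie below $\e_3=2\xi^{2/3}$ are nonetheless correct.
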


\begin{proof}
First assume that $|B\setminus B'|> \e_3|B|$. By the definition of $B'$, we get that
\[
e(B) > \frac 1k \e_1\binom {|B|}{k-1} \cdot \e_3 |B| > 2\xi\binom {|B|}k,
\]
which contradicts $e(B)\le \xi \binom{|B|}{k}$.

Second, assume that $|A\setminus A'|> \e_3 |B|$. Then by the definition of $A'$, for any vertex $v\notin A'$, we have that $\overline \deg(v,B)> \e_2\binom{|B|}{k-1}$. So we get
\[
\overline e(AB^{k-1})> \e_3 |B| \cdot \e_2 \binom{|B|}{k-1} = 2\xi|B|  \binom{|B|}{k-1}.
\]
Together with $e(B)\le \xi \binom{|B|}{k}$, this implies that
\begin{align*}
\sum_{S\in \binom B{k-1}}\overline\deg(S)&= k\overline{e}(B) + \overline{e}(AB^{k-1}) \\
                              &> k(1 - \xi) \binom{|B|}k + 2\xi|B| \binom{|B|}{k-1}  \\
                              &= ((1-\xi)(|B|-k+1)+2\xi|B|) \binom{|B|}{k-1} > |B|  \binom{|B|}{k-1}.
\end{align*}
where the last inequality holds because $n$ is large enough. By the pigeonhole principle, there exists a set $S\in \binom{B}{k-1}$, such that
$\overline\deg(S) > |B|= \frac{m-a_1}{m}n$, contradicting $\delta_{k-1}(H) \ge \frac{a_1}{m}n$.

Consequently,
\begin{align*}
&|A'\setminus A|=|A'\cap B|\le |B\setminus B'|\le \e_3|B|,   \\
&|B'\setminus B|=|A\cap B'|\le |A\setminus A'|\le \e_3|B|, \\
&|V_0|\le |A\setminus A'|+|B\setminus B'|\le \e_3|B|+\e_3|B|=2\e_3 |B|. \qedhere
\end{align*}
\end{proof}

Note that by $|B\setminus  B'|\le \e_3 |B|$, we have
\begin{equation}\label{eq:V_0}
\deg(w, B')\ge \deg\left(w,B \right)-|B\setminus B'| \binom{|B|}{k-2} \ge \frac{\e_1}2\binom{|B'|}{k-1} \text{ for any vertex }w\in V_0,
\end{equation}
and by $|B'\setminus  B|\le \e_3 |B|$, we have
\begin{equation}\label{eq:A'}
\overline{\deg}(v, B')\le \overline{\deg}\left(v,B \right)+|B'\setminus B| \binom{|B'|}{k-2} \le 2\e_2\binom{|B'|}{k-1} \text{ for any vertex }v\in A',
\end{equation}
and
\begin{equation}\label{eq:B'}
\deg(v, B')\le \deg\left(v,B \right)+|B'\setminus B| \binom{|B'|}{k-2} \le 2\e_1\binom{|B'|}{k-1} \text{ for any vertex }v\in B'.
\end{equation}
Moreover, for any $(k-1)$-set $S\subseteq B'$, by $\deg(S, A')+\deg(S, B')+\deg(S, V_0)\ge \delta_{k-1}(H)$ and $\overline{\deg}(S, A')=|A'| - \deg(S, A')$, we have
\[
\overline{\deg}(S, A') \le |A'| - \delta_{k-1}(H) + \deg(S, B') + \deg(S, V_0) \le \deg(S, B') + 3\e_3 |B|,
\]
where we used $\deg(S, V_0)\le |V_0|\le 2\e_3 |B|$, $|A'|\le \frac{a_1}mn + \e_3|B|$ and $\delta_{k-1}(H)\ge \frac{a_1}mn$.
Furthermore, for any $v\in B'$, by~\eqref{eq:B'}, we have
\[
\sum_{S: v\in S\in \binom{B'}{k-1}}{\deg}(S, B') = (k-1) \deg(v, B') \le 2(k-1)\e_1 \binom{|B'|}{k-1}.
\]
Putting these together gives that for any $v\in B'$,
\begin{align}
\overline{\deg}(v, A' B'^{k-1}) &= \sum_{S}\overline{\deg}(S, A') \le  \sum_{S}{\deg}(S, B') + 3\e_3|B|\binom{|B'|-1}{k-2} \le 2k\e_1 \binom{|B'|}{k-1}, \label{eq:ABB}
\end{align}
where the sums are on $S$ such that $v\in S\in \binom{B'}{k-1}$.
Let $\B$ be the set of $(k-1)$-sets $S\subseteq B'$ such that $\overline{\deg}_{H}(S, A') > \sqrt{2\e_2}|A'|$.
By~\eqref{eq:A'}, we have that $\overline{e}_H(A' B'^{k-1})\le 2\e_2 |A'|\binom{|B'|}{k-1}$ and thus
\begin{equation}\label{eq:familyB}
|\B|\le \sqrt{2\e_2}\binom{|B'|}{k-1}.
\end{equation}

In all three proofs we will define $\e_1, \e_2, \e_3$ and $A', B', V_0$ in the same way and thus Claim~\ref{clm:size} and~\eqref{eq:V_0} -- \eqref{eq:familyB} hold.

\subsection{Proof of Theorem \ref{lemE}}

Assume that $k\ge 3$, $a_1\le \cdots\le a_k$ and $m=a_1+\cdots +a_k$.
%Let $0<\xi\ll 1/m$, and let $n\in m \mathbb{N}$ be sufficiently large. 
%Let $H$ be a $k$-graph on $V$ of $n$ vertices such that $\delta_{k-1}(H) \ge \frac{a_1}{m}n$. Furthermore, assume that there is a set $B\subseteq V(H)$, such that $|B| = \frac{m-a_1}{m}n$ and $e(B)\le \xi \binom{|B|}{k}$. Let $A=V\setminus B$. Then $|A|= \frac{a_1}{m}n$.
%
Let $K:=K^{(k)}(a_1, \dots, a_k)$ such that $\gcd(K) = 1$.
Suppose $1/n \ll \xi \ll 1/m$ such that $n\in m\mathbb{N}$.
Assume $H$ is a $\xi$-extremal $k$-graph on $n$ vertices that satisfies \eqref{eq:deg}.
Let $B$ be a set of $\frac{m-a_1}{m}n$ vertices such that $e(B)\le \xi \binom{|B|}{k}$. Let $A=V\setminus B$. 
Define $\e_1, \e_2, \e_3, A', B', V_0$ as in Section 1.1 and thus Claim~\ref{clm:size} and~\eqref{eq:V_0} -- \eqref{eq:familyB} hold.

In the following proof we will build four vertex-disjoint $K$-tilings $\K_1, \K_2, \K_3, \K_4$, whose union is a $K$-factor of $H$. The ideal case is when $(m-a_1)|A'| = a_1 |B'|$ and $V_0 = \emptyset$ -- in this case we apply Lemma \ref{lem3} to obtain a $K$-factor of $H$ such that each copy of $K$ has $a_1$ vertices in $A'$ and $m-a_1$ vertices in $B'$. 
So the purpose of the $K$-tilings $\K_1, \K_2, \K_3$ is to cover the vertices of $V_0$ and adjust the sizes of $A'$ and $B'$ so that we can apply Lemma \ref{lem3} (and obtain $\K_4$) after $\K_1, \K_2, \K_3$ are removed.
More precisely, we cover the vertices of $V_0$ by $\K_2$ and let
\[
q := |B'| - |B| = \frac{a_1}mn - |A'|-|V_0|
\]
denote the \emph{discrepancy} between the current and ideal sizes of $B'$. If $q>0$, then we apply the minimum codegree condition to find copies of $K$ from $B'$. Since removing a copy of $K$ from $B'$ reduces the discrepancy by $a_1$, we can not reduce the discrepancy to zero unless $a_1$ divides $q$. Therefore we remove enough copies of $K$ from $B'$ (denoted by $\K_1$) such that the discrepancy is less than or equal to $-C$. This allows us to apply the definition of Frobenius numbers and remove more copies of $K$ (denoted by $\K_3$) to ``increase" the discrepancy to zero. 

\medskip
\noindent {\bf The $K$-tilings $\K_1, \K_2$.}  
Our goal is to find $K$-tilings $\K_1, \K_2$ such that $V_0\subseteq V(\K_2)$,
\begin{align}
&|\K_1| + |\K_2|\le 4\e_3 |B| \label{eq:K12} \text{ and }\\
& -2a_1 \e_3 |B| \le q_1 \le -C, \label{eq:t}
\end{align}
where $q_1:= \frac{a_1}{m}|V\setminus V(\K_1\cup \K_2)| - |A'\setminus V(\K_1\cup K_2)|$.

When $q\le -C$, let $\K_1=\emptyset$. When $q\ge 1-C$, $\K_1$ consists of $q+C$ copies of $K$ obtained from $H[B']$ as follows.\footnote{It suffices to find $\lceil (q+C)/a_1 \rceil$ copies of $K$ but since Fact~\ref{lemEX} provides (at least) $q+C$ copies, we choose to use all of them to simplify later calculations.}
Note that $\delta_{k-1}(H[B']) \ge \delta_{k-1}(H) - |A'| - |V_0| \ge q + C + f(n)$ by~\eqref{eq:deg} and the definition of~$q$. We claim that 
\begin{equation}
\label{eq:ex}
\frac{f(n)}{k} \binom{|B'|}{k-1} \ge {\rm ex} \Big(|B'| - (q+C-1)m, K\Big). 
\end{equation}
Indeed, when $q\ge 1$, we have $|B'| \ge |B|+1$ and $\binom{|B'|}{k-1} \ge \binom{|B|+1}{k-1}$. Since $f(n)\ge k\, \ex(|B|+1, K)/ \binom{B|+1}{k-1}$, it follows that $\frac{f(n)}{k} \binom{|B'|}{k-1} \ge \ex(|B|+1, K)$. Since $|B|+1= |B'|-q+1\ge |B'| - (q+C-1)m$, \eqref{eq:ex} follows from the monotonicity of the Tur\'an number. 
When $q\le 0$, we have $|B| - C< |B'|\le |B|$. The definition of $f(n)$ implies that $\frac{f(n)}{k} \binom{|B'|}{k-1} \ge \ex(|B'|, K)$. Again, \eqref{eq:ex} follows from the monotonicity of the Tur\'an number. 

By~\eqref{eq:ex}, we have
\begin{align*}
e_H(B')&\ge \frac{\delta_{k-1}(H[B'])}{k} \binom{|B'|}{k-1} \ge \frac{q+C}{k}\binom{|B'|}{k-1} + {\rm ex}(|B'| - (q+C-1)m, K) \\
&> (q+C-1)m\cdot 2\e_1 \binom{|B'|}{k-1} + {\rm ex}(|B'| - (q+C-1)m, K).
\end{align*}
By \eqref{eq:B'}, we can apply Fact~\ref{lemEX} to obtain $q+C$ vertex-disjoint copies of $K$ in $H[B']$, denoted by $\K_1$.

Next we choose a $K$-tiling $\K_2$ such that each copy of $K$ contains $a_1-1$ vertices in $A'$, one vertex in $V_0$ and $m-a_1$ vertices in $B'$.
By~\eqref{eq:V_0} and~\eqref{eq:familyB} we derive that for any vertex $w\in V_0$,
\[
|N(w, B')\setminus \B| \ge \frac{\e_1}2\binom{|B'|}{k-1} - \sqrt{2\e_2}\binom{|B'|}{k-1} \ge \frac{\e_1}3\binom{|B'|}{k-1}
\]
by the choice of $\e_1$ and $\e_2$. % YZ added
Let $V_0=\{w_1,\dots, w_{|V_0|}\}$.
For each $w_i$, by~\eqref{eq:erdos} we can find a copy $T_i$ of $K^{(k-1)}(a_2,\dots, a_k)$ in (the $(k-1)$-graph) $N(w, B')\setminus \B$ such that these copies are vertex disjoint, and are also vertex disjoint from $V(\K_1)$.
This is possible because the number of vertices in $B'$ that we need to avoid is at most $|V(\K_1)|+(m-a_1)|V_0|\le (\e_3|B|+C)\cdot m+ (m-a_1)\e_3|B|\le 2m\e_3|B|$, and so we have 
\[
|N(w, B')\setminus \B| - 2m\e_3 |B|\binom{|B'|}{k-2} \ge \frac{\e_1}3\binom{|B'|}{k-1} - 2m\e_3|B|\binom{|B'|}{k-2} \ge \frac{\e_1}4\binom{|B'|}{k-1}.
\]
Thus,~\eqref{eq:erdos} implies the existence of the desired $T_i$.
Note that each $\{w_i\}\cup T_i$ spans a copy of $K^{(k)}(1, a_2,\dots, a_k)$ in $H$.
To obtain copies of $K$, we extend each of them (one by one) by adding $a_1-1$ vertices from $A'$.
Note that each such vertex from $A'$ needs to be the common neighbor of $a':=\prod_{2\le i\le k}a_i$ $(k-1)$-sets, which is possible since by our choice, these $(k-1)$-sets are not in $\B$, and thus they have at least $(1 - a'\sqrt{2\e_2})|A'|$ common neighbors in $A'$.
Since $(1 - a'\sqrt{2\e_2})|A'| > (a_1-1)|V_0|$, we can greedily extend each $\{w_i\}\cup T_i$ into a copy of $K$.
Denote the resulting $K$-tiling by $\K_2$.

By definitions, we have $|\K_1|\le |q|+C$ and $|\K_2|=|V_0|$.
The result in~\cite{ErGr} (see also~\cite{Vitek75}) mentioned in Section 1 implies that $C\le (a_k-a_1)^2$.
By Claim~\ref{clm:size}, $|\K_1|+|\K_2|\le \e_3 |B| + C + 2\e_3 |B|\le 4\e_3 |B|$, i.e.,~\eqref{eq:K12} holds.
Let $A_1$ and $B_1$ be the sets of vertices in $A'$ and $B'$ not covered by $\K_1\cup \K_2$, respectively, and $V_1 := A_1\cup B_1$. 
So $q_1= \frac{a_1}{m}|V_1| - |A_1|$.
Note that $|A_1| = |A'| - (a_1-1)|V_0|$ and $|V_1| = n - m|\K_1| - m |V_0|$.
So we have
\[
q_1 = \frac{a_1}{m}n - a_1 |\K_1| - |V_0| - |A'| = q - a_1 |\K_1| \le q - |\K_1|.
\]
Recall that $|\K_1|=q+C$ if $q\ge 1-C$ and $|\K_1|=0$ if $q\le -C$.
So in both cases we get $q_1 \le q - |\K_1|\le - C$.
Moreover, by $-\e_3 |B|\le q\le \e_3 |B|$ and that $n$ is sufficiently large, we have $q_1 = q - a_1 |\K_1|\ge q - a_1 |q+C| \ge - 2a_1\e_3|B|$.
So~\eqref{eq:t} holds.

\medskip
\noindent {\bf The $K$-tiling $\K_3$.} Next we build our $K$-tiling $\K_3$. 
Since $-q_1\ge C > g(a_2 - a_1, a_3 - a_1,\dots, a_k - a_1)$ and $\gcd(a_2 - a_1, a_3 - a_1,\dots, a_k - a_1)=1$, there exists nonnegative integers $\ell_1,\dots, \ell_{k-1}$ such that
\[
\ell_1 (a_2 - a_1) + \ell_2 (a_3 - a_1) + \cdots + \ell_{k-1} (a_k - a_{1}) = -q_1
\]
(here we let $\ell_i=0$ if $a_i - a_1=0$).
For each $i\in [k-1]$, we pick $\ell_i$ vertex disjoint copies of $K$ each with $a_{i+1}$ vertices in $A_1$ and $m-a_{i+1}$ vertices in $B_1$ such that all copies of $K$ are vertex disjoint.
Denote by $\K_3$ as the set of copies of $K$.
Note that we can choose these desired copies of $K$ by Proposition~\ref{supersaturation} and the fact that $H(A_1 B_1^{k-1})$ is dense (see~\eqref{eq:abb}).
By the definition of $\K_3$, we have $|\K_3| = \sum_{i\in [k-1]} \ell_i$ and
\begin{align*}
\frac{a_1}m|V_1\setminus V(\K_3)| - |A_1\setminus V(\K_3)| = &\, \frac{a_1}m (|V_1|- m|\K_3|) - (|A_1| - |V(\K_3)\cap A_1|)\\
= &\, q_1 + (|V(\K_3)\cap A_1| - a_1 |\K_3|) \\
=&\, q_1 + \sum_{i\in [k-1]} \ell_i (a_{i+1} - a_1) = 0.
\end{align*}
Let $A_2=A_1\setminus V(\K_3)$ and $B_2=B_1\setminus V(\K_3)$.
So we have $\frac{a_1}m (|A_2|+ |B_2|)=|A_2|$, i.e., $(m-a_1)|A_2|=a_1 |B_2|$.

\medskip
Note that $|\K_3| = \sum_{i\in [k-1]} \ell_i \le -q_1 \le 2a_1\e_3 |B|$ by~\eqref{eq:t}. 
Together with~\eqref{eq:K12} and Claim~\ref{clm:size}, we have 
\[
|B_2|\ge |B'| - |V(\K_1\cup \K_2)| - m|\K_3|\ge |B'| - 4m\e_3 |B| - 2a_1 m \e_3|B|> (1-\e_1)|B'|.
\]
Hence, for every vertex $v\in A_2$, by~\eqref{eq:A'},
\[
\overline{\deg}(v, B_2)\le \overline{\deg}(v, B')\le 2\e_2\binom{|B'|}{k-1}\le 2\e_2 \binom {\frac{1}{1-\e_1}|B_2|}{k-1} < \e_1 \binom{|B_2|}{k-1}.
\]
By~\eqref{eq:ABB} and $|B_2| \ge (1-\e_1)|B'|$, for every $v\in B_2$ we have
\begin{align}
\overline{\deg}(v, A_2 B_2^{k-1})\le \overline{\deg}(v, A' B'^{k-1}) \le 2k\e_1 \binom{|B'|}{k-1}\le 3k\e_1\binom{|B_2|}{k-1}. \label{eq:abb}
\end{align}

\medskip
\noindent {\bf The $K$-tiling $\K_4$.} 
At last, we apply Lemma \ref{lem3} with $X=A_2$, $Y=B_2$ and $\rho=3k{\e_1}$ and get a $K$-factor $\K_4$ on $A_2\cup B_2$.

So $\K_1\cup \K_2\cup \K_3\cup \K_4$ is a $K$-factor of $H$.
This concludes the proof of Theorem \ref{lemE}.

\subsection{Proof of Theorem~\ref{lemE3}}
As mentioned in Section~1, \eqref{eq:deg} reduces to $\delta_{k-1}(H)\ge \frac{n}{k+1}+1$ when $K= K^{(k)}(1,\dots, 1,2)$.
Thus Theorem~\ref{lemE3} follows from Theorem~\ref{lemE} if Condition (i) holds. Now assume Condition (ii), that is, 
$\delta_{k-1}(H)\ge \frac{n}{k+1}$ and $k-i \nmid\binom{n'-i}{k-1-i}$ for some $0\le i\le k-2$ and $n' = \frac{k n}{k+1}+1$.
The proof follows the proof of Theorem~\ref{lemE} (with $C=0$) closely and the only difference is the existence of $\K_1$ when $q=|B'| - |B|\ge 1$.
Note that $\delta_{k-1}(H[B']) \ge q$.
Since ${\rm ex}(n, K^{(k)}(1,\dots, 1,2))\le \binom{n}{k-1}/k$, when $q\ge 2$ we can find $q$ copies of $K^{(k)}(1,\dots, 1,2)$ in $B'$ by Fact~\ref{lemEX}.
Otherwise $q=1$, i.e., $|B'|=n'$. Assume to the contrary that $H[B']$ is $K^{(k)}(1,\dots, 1,2)$-free, i.e., every $k-1$ vertices in $B'$ has degree at most $1$ in $B'$.
Then by $\delta_{k-1}(H[B']) \ge 1$ we derive that every $k-1$ vertices in $B'$ has degree exactly $1$ in $B'$.
This means that a Steiner system $S(k-1, k, n')$ exists, contradicting the divisibility conditions.

\subsection{Proof of Theorem~\ref{lemE2}}
\label{sec:Elc}

Recall that a loose cycle $C_s^k$ has a vertex set $[s(k-1)]$ and $s$ edges $\{\{j(k-1)+1,\dots, j(k-1)+k\} \text{ for } 0\le j <s\}$, where we treat $s(k-1)+1$ as $1$. When $s=2, 3$, $C_s^k$ has a unique $k$-partite realization: $C_2^k\subset K^{(k)}(1,1, 2, \dots, 2)$ and $C_3^k \subset K^{(k)}(2, 2, 2, 3, \dots, 3)$. When $s\ge 4$, we 3-color the vertices $j(k-1)+1$, $0\le j < s$ (these are the vertices of degree two) with $\lfloor s/2 \rfloor$ red vertices, $\lfloor s/2 \rfloor-1$ blue vertices and the remaining one or two vertices green. We complete the $k$-coloring of $C_s^k$ by coloring the $k-2$ uncolored vertices of each edge of $C_s^k$ with the $k-2$ colors not used to color the two vertices of degree two. In this coloring, there are  $\lceil s/2 \rceil$ red vertices, $\lceil s/2 \rceil+1$ blue vertices, $s-1$ or $s-2$ green vertices, and $s$ vertices in other color classes. Furthermore, since each vertex in $C_s^k$ has degree at most $2$, in any $k$-coloring of $C_s^k$, each color class has size at least $\lceil s/2 \rceil$. Thus, $\sigma(C_s^k) = \frac{\lceil s/2 \rceil}{s(k-1)}$. 

We summarize above arguments into a proposition.

\begin{proposition}\label{propcsk}
For any $k\ge 4$ and $s\ge 2$ we have $\sigma(C_s^k) = \frac{\lceil s/2 \rceil}{s(k-1)}$. % YZ removed \tau(C_s^k) because it is not the same as \sigma.
Moreover, there exists a $k$-partite realization of $C_s^k$, in which the smallest part is of size $\lceil s/2 \rceil$ and there is a part of size $\lceil s/2 \rceil +1$.
In particular, $\gcd(C_s^k)=1$. \qed
\end{proposition}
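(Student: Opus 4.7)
The plan is to formalize the construction sketched in the paragraph immediately preceding the statement. First I would set up notation for the combinatorial structure of $C_s^k$: it has $s$ vertices of degree two (the ``joint'' vertices at positions $j(k-1)+1$ for $0\le j<s$) and $s(k-2)$ vertices of degree one. Two joint vertices lie in a common edge whenever their indices are consecutive modulo $s$, so the joints form a cycle of length $s$ (with doubled edge when $s=2$) in the ``joint graph''. I would exploit this cyclic structure throughout.

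The core lower bound on part sizes comes from a simple double counting. Fix any $k$-partite realization with parts $U_1,\dots,U_k$ and let $d_j$ be the number of degree-two vertices in $U_j$. Counting incidences between the $s$ edges and the part $U_j$ two ways gives $s=2d_j+(|U_j|-d_j)$, so $|U_j|=s-d_j$. Since any two joints sharing an edge must lie in distinct parts, the joints inside $U_j$ form an independent set in the joint cycle, so $d_j\le\lfloor s/2\rfloor$, whence $|U_j|\ge\lceil s/2\rceil$. This yields $\sigma(C_s^k)\ge\lceil s/2\rceil/(s(k-1))$.

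For the matching upper bound and the ``moreover'' clause, I would exhibit an explicit realization. For $s\ge 4$ I would label the $s$ joint vertices cyclically with three symbols $R$, $B$, $G$, using $\lfloor s/2\rfloor$ reds, $\lfloor s/2\rfloor-1$ blues and the remaining one or two greens, arranged so that consecutive joints receive different symbols. A concrete pattern such as $R B R B \cdots R G$ for even $s$, adjusted by inserting an additional $G$ for odd $s$, works; the existence of such a proper cyclic 3-coloring is automatic since no prescribed class exceeds $\lfloor s/2\rfloor$ elements in a cycle of length $s$. Then for each edge I would label its $k-2$ degree-one vertices with the $k-2$ symbols from a fixed palette of size $k$ that are not used on its two joints. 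Applying the formula $|U_j|=s-d_j$ then gives $|U_R|=\lceil s/2\rceil$ and $|U_B|=\lceil s/2\rceil+1$, while the cases $s\in\{2,3\}$ are handled by the unique realizations $K^{(k)}(1,1,2,\dots,2)$ and $K^{(k)}(2,2,2,3,\dots,3)$ mentioned in the text.

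Finally, since this realization contains parts of sizes $\lceil s/2\rceil$ and $\lceil s/2\rceil+1$, we have $1\in\mathcal{D}(C_s^k)$, and hence $\gcd(C_s^k)=1$. The main technical point will be constructing the proper cyclic 3-coloring of the joints with the prescribed class sizes for all parities of $s\ge 4$; once this is produced by an explicit pattern, the rest is bookkeeping from the incidence formula $|U_j|=s-d_j$.
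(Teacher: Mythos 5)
Your proof is correct and takes essentially the same approach as the paper: color the $s$ degree-two ``joint'' vertices with $\lfloor s/2\rfloor$ reds, $\lfloor s/2\rfloor-1$ blues and one or two greens, extend to a $k$-coloring via the $k-2$ remaining colors per edge, and bound part sizes below using that every vertex has degree at most two. Your incidence identity $|U_j|=s-d_j$ together with $d_j\le\lfloor s/2\rfloor$ is just a slightly more explicit rendering of the paper's one-line observation, and it has the small advantage of simultaneously delivering the exact part sizes for the ``moreover'' clause.
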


%The value of ${\rm ex}(n, C_s^k)$ is determined in \cite{FurediJiang, KMV}, which is $(\lfloor \frac{s-1}{2} \rfloor+o(1)) \binom{n}{k-1}$.
%So by Theorem~\ref{lemE} we get that if $H$ is extremal, then $\delta_{k-1}(H)\ge \sigma(C_s^k)n+ \lfloor \frac{s-1}{2} \rfloor k+1$ guarantees a $C_s^k$-factor.
%We shall improve this to $\delta_{k-1}(H)\ge \sigma(C_s^k)n$.
%In fact, our later proof shows that in most cases we do not need to know ${\rm ex}(n, C_s^k)$: 

In order to prove Theorem~\ref{lemE2}, we use upper bounds for ${\rm ex}(n, P_2^k)$ and ${\rm ex}(n, C_2^k)$ from~\cite{FF1}.
% \footnote{In fact, the exact value of ${\rm ex}(n, C_2^k)$ is known but Theorem~\ref{thm:FF1} suffices for our purpose.}.
Note that the results in~\cite{FF1} are in the language of extremal set theory, but it is easy to formalize the results for our purpose: a $k$-graph is $C_2^k$-free if and only if the size of the intersection of any two edges is not $2$; a $k$-graph is $P_2^k$-free if and only if the size of the intersection of any two edges is not $1$.

\begin{theorem}\cite{FF1}\label{thm:FF1}
For $k\ge 4$, there exists a constant $d_k$ such that ${\rm ex}(n, C_2^k) \le d_k n^{\max\{2,k-3\}}$ and ${\rm ex}(n, P_2^k) \le \binom{n-2}{k-2}$.
\end{theorem}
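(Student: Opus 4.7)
The plan is to prove both bounds by studying vertex links and reducing to intersecting-family estimates. The key observation is that being $P_2^k$-free (resp. $C_2^k$-free) is a condition on \emph{intersection sizes} of pairs of edges, which translates cleanly to a structural condition on each link.

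For the bound $\ex(n, P_2^k) \le \binom{n-2}{k-2}$, I would first note that $P_2^k$-freeness forces any two edges of $H$ to share either $0$ or at least $2$ vertices. Consequently, for every vertex $v$, the link $L_v := \{e\setminus\{v\} : v \in e \in H\}$ is an intersecting $(k-1)$-uniform family on $V(H)\setminus\{v\}$: if $S, S' \in L_v$ were disjoint then $\{v\}\cup S$ and $\{v\}\cup S'$ would form a $P_2^k$. The Erd\H{o}s--Ko--Rado theorem then gives $|L_v| \le \binom{n-2}{k-2}$ for $n$ large. Double-counting incidences yields only $e(H) \le \frac{n}{k}\binom{n-2}{k-2}$, so to obtain the sharp bound I would invoke the stability form of EKR: for $n$ large, an intersecting $(k-1)$-family close to extremal must essentially be a star through some vertex $u$. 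Forcing the ``centers'' $u_v$ to coincide across $v$ using $P_2^k$-freeness shows that $H$ is essentially the full star through a fixed pair $\{u,w\}$, giving the claimed count.

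For the bound $\ex(n, C_2^k) \le d_k n^{\max\{2,k-3\}}$, observe that $C_2^k$-freeness corresponds to forbidden intersection size $2$. So for a vertex $v$ and two edges $\{v\}\cup S, \{v\}\cup S' \in H$, their intersection has size $1+|S\cap S'|$, which must avoid $2$; hence $L_v$ is a $(k-1)$-uniform hypergraph with no two edges sharing exactly one vertex, i.e., $L_v$ is $P_2^{k-1}$-free. For $k\ge 5$, the previous bound applied to $L_v$ gives $|L_v| \le \binom{n-3}{k-3}$, and double-counting yields $e(H) = O(n^{k-2})$, a factor of $n$ above the target. I would close this gap via stability: near-extremal $P_2^{k-1}$-structure forces each $L_v$ to concentrate on a fixed pair, and $C_2^k$-freeness then forces the edges of $H$ to concentrate on a common triple, yielding $O(n^{k-3})$ edges. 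For $k=4$, $L_v$ is a $3$-graph in which every two edges share $0$ or $\ge 2$ vertices; an elementary degree or sunflower argument shows $|L_v| = O(n)$, whence $e(H) = O(n^2)$.

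The main obstacle is the stability step: the local EKR or $P_2^{k-1}$-type bound on links follows immediately, but converting it to the sharp global bound requires showing that $H$ is essentially a canonical extremal structure (all $k$-sets containing a fixed pair, respectively, a fixed triple). I expect the cleanest route is Frankl's shifting operation, which preserves both $P_2^k$- and $C_2^k$-freeness and reduces $H$ to a compressed family whose edge count can be read off directly from its kernel; as a fallback, a removal-style argument deleting a bounded number of ``bad'' edges per vertex would reduce to a near-extremal family whose classification is tractable.
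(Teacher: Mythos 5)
This statement is \emph{cited} from \cite{FF1} (Frankl--F\"uredi, ``Forbidding just one intersection''); the paper does not prove it, so there is no internal proof to compare against. The Frankl--F\"uredi argument is a $\Delta$-system (sunflower) argument, which is qualitatively different from your link-recursion approach, so the comparison is really with the external source.

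Your link observations are correct: $P_2^k$-freeness makes every vertex link intersecting, and $C_2^k$-freeness makes every link $P_2^{k-1}$-free. But, as you yourself flag, the plain double-count $k\,e(H)=\sum_v|L_v|$ is off by a factor of roughly $n$ from the claimed bounds, and the two mechanisms you propose for closing that gap do not hold up. The shifting fallback is based on a false premise: the $(i,j)$-shift does \emph{not} preserve the property ``no two edges meet in exactly one vertex.'' For instance, with $k=3$ and $\mathcal F=\{\{1,2,3\},\{4,5,6\}\}$ (disjoint edges, so $P_2^3$-free), applying $S_{1,4}$ yields $\{\{1,2,3\},\{1,5,6\}\}$, and these intersect in exactly one vertex. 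The same example (with $k\ge 4$ and padding) shows the $(i,j)$-shift does not preserve $C_2^k$-freeness either, since shifting can shrink an intersection of size $3$ down to size $2$. Compression is therefore not available in the form you invoke it, and you cannot reduce to a ``kernel'' whose count is read off directly.

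The Hilton--Milner/stability route is also not fleshed out to the point of giving the stated bound. Stability for intersecting families says that if $|L_v|$ is within $(1-\varepsilon)$ of $\binom{n-2}{k-2}$ then $L_v$ is close to a star, but it does not directly force the star centers $u_v$ to agree across $v$; one has to play the $C_2^k$- or $P_2^k$-free structure against the near-star structure at every vertex simultaneously, and handle the (possibly many) vertices whose links are not near-extremal. Nothing in the sketch rules out a scenario where every $|L_v|$ is, say, $\frac{1}{2}\binom{n-2}{k-2}$ — far from extremal so stability is vacuous — yet $e(H)$ is $\Theta(n^{k-1})$. Turning this into a proof would essentially reproduce the hard part of Frankl's original argument, which is precisely why the paper cites \cite{FF1} rather than proving it. The one piece that does go through cleanly as written is the base case $k=4$ for $C_2^k$, where the recursion to $P_2^3$-free links on $O(n)$ edges per link does give $e(H)=O(n^2)$; everything above that (and the sharp constant $\binom{n-2}{k-2}$ in the $P_2^k$ bound) remains unproven.
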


\begin{proof}[Proof of Theorem~\ref{lemE2}]
Let $K^{(k)}(a_1, \dots, a_k)$ be a $k$-partite realization of $C_s^k$ such that $a_1=\lceil s/2 \rceil$ and $a_{k'}=\lceil s/2 \rceil+1$ for some $k'\in [k]$.
Then we have that $\gcd(K^{(k)}(a_1, \dots, a_k))=1$ and $C=g(a_2-a_1, \dots, a_k-a_1)+1=0$.
Let $m=s(k-1)$.
Suppose $1/n \ll \xi \ll 1/m$.
Assume $H$ is an $n$-vertex $k$-graph which is $\xi$-extremal and $\delta_{k-1}(H)\ge \frac{a_1}{m}n$.
Define $\e_1, \e_2, \e_3, A', B', V_0$ as in Section 1.1 and thus Claim~\ref{clm:size} and~\eqref{eq:V_0} -- \eqref{eq:familyB} hold.

The proof follows the one of Theorem~\ref{lemE} by constructing $C_s^k$-tilings $\K_1$, $\K_2$, $\K_3$ and $\K_4$, whose union forms a perfect $C_s^k$-tiling of $H$.
We will only show the first step, the existence of $\K_1, \K_2$, because it is the only part different from that in the proof of Theorem~\ref{lemE}.

We here need a stronger control on the `good' $(k-1)$-sets in $B'$, i.e., every vertex in $B'$ is in many such good $(k-1)$-sets (note that this is stronger than $\mathcal{B}$, which we only have control on the total number of `bad' sets).
Let $G$ be the $(k-1)$-graph on $B'$ whose edges are all $(k-1)$-sets $S\subseteq B'$ such that $\overline{\deg}_{H}(S, A') < \sqrt{2k\e_1}|A'|$. 
We claim that
\begin{equation}\label{eq:dG0}
\delta_1(G) \ge (1- m\sqrt{2k\e_1}) \binom{|B'|-1}{k-2}, \text{ and thus, } \overline{e}(G)\le m\sqrt{2k\e_1} \binom{|B'|}{k-1}.
\end{equation}
Suppose instead, some vertex $v\in B'$ satisfies $\overline \deg_G(v)> m\sqrt{2k\e_1} \binom{|B'|-1}{k-2}$. 
Since every non-neighbor $S'$ of $v$ in $G$ satisfies $\overline \deg_H(S' \cup\{v\}, A')\ge \sqrt{2k\e_1} |A'|$, we have
\[
\overline \deg_{H}(v, A'B'^{k-1}) > m\sqrt{2k\e_1} \binom{|B'|-1}{k-2} \sqrt{2k\e_1} |A'| > 2k\e_1 \binom{|B'|}{k-1},
\]
where we used $m|A'|>|B'|$. This contradicts~\eqref{eq:ABB}.

\medskip
\noindent {\bf The $K$-tilings $\K_1, \K_2$.} 
Assume that $q=|B'| - \frac{m-a_1}{m}n$.
Similar as in the proof of Theorem~\ref{lemE}, our goal is to find $C_s^k$-tilings $\K_1, \K_2$ such that $V_0\subseteq V(\K_2)$, and~\eqref{eq:K12} and~\eqref{eq:t} hold (with $C=0$).

We first construct a $C_s^k$-tiling $\K_1$ such that $|\K_1|=\max\{q, 0\}$ and each copy of $C_s^k$ in $\K_1$ contains exactly $m-a_1+1$ vertices in $B'$.
Let $\K_1=\emptyset$ if $q\le 0$.
Then assume $q\ge 1$ and note that $\delta_{k-1}(H[B'])\ge q$.
Thus
\[
e_H(B')\ge \frac1k \delta_{k-1}(H[B'])\binom{|B'|}{k-1} \ge \frac{q}{k}\binom{|B'|}{k-1} > (q-1) m\cdot 2\e_1 \binom{|B'|}{k-1} + \frac{q}{2k}\binom{|B'|}{k-1}.
\]
Since $k\ge 4$, by Theorem~\ref{thm:FF1}, we know that ${\rm ex}(|B'|, C_2^k)\le \frac{q}{2k}\binom{|B'|}{k-1}$ and ${\rm ex}(|B'|, P_2^k)\le \frac{q}{2k}\binom{|B'|}{k-1}$.
First assume $s=2$. 
Note that if $s=2$, then $a_1=1$, that is, $m-a_1+1=m$.
By \eqref{eq:B'} and Fact~\ref{lemEX}, $H[B']$ contains a set of $q$ vertex disjoint copies of $C_2^k$. 
Denote it by $\K_1$ and we are done.
Second assume $s\ge 3$, then by \eqref{eq:B'} and Fact~\ref{lemEX}, $H[B']$ contains a collection of $q$ vertex disjoint copies of $P_2^k$ denoted by $Q_{1},\dots, Q_{q}$.

For each $i\in [q]$, we extend $Q_i$ (or only one edge of $Q_i$) to a copy of $C_s^k$ such that all copies of $C_s^k$ are vertex disjoint and each copy contains exactly $m-a_1+1$ vertices in $B'$.
Indeed, for $i\in [q]$, let $E_1, E_s$ be the two edges in $Q_i$.
If $s$ is even, let $Q_i'=Q_i$ and pick $u\in E_1\setminus E_s$, $u'\in E_s\setminus E_1$; if $s$ is odd, let $Q_i'=E_1$ and let $u, u'$ be two distinct vertices from $Q_i'$.
Let $s'=2 \lceil s/2 \rceil $.
We pick vertex sets $S_2,\dots, S_{s'-1}$ of size $k-2$, and vertices $u_3,u_5,\dots, u_{s'-3}$ from the unused vertices in $B'$ such that the following $(k-1)$-sets
\begin{align*}
&F_2:=S_2\cup \{u\}, \, F_{s'-1}:= S_{s'-1}\cup \{u'\}, \\
&F_{2j-1}:=S_{2j-1}\cup \{u_{2j-1}\}, \, F_{2j}:=S_{2j}\cup \{u_{2j-1}\} \text{ for } 2\le j\le (s'-2)/2
\end{align*}
are in $E(G)$. This is possible by~\eqref{eq:dG0} (we pick an edge that contains $u$, an edge that contains $u'$ and then some copies of $P_2^{k-1}$ such that all these are vertex disjoint and vertex disjoint from other existing vertices).
Then for each $2\le j\le s'/2$, pick $u_{2j-2}\in N_H(F_{2j-2})\cap N_H(F_{2j-1})\cap A'$, which is possible since $F_i\in E(G)$ and thus $\overline{\deg}_H(F_i, A')< \sqrt{2k\e_1}|A'|$.
Note that $Q_i'\cup \bigcup_{2\le j\le s'-1} S_{j}\cup \{u_2,\dots, u_{s'-2}\}$ spans a loose cycle of length $s'-1$ if $s$ is odd and $s'$ if $s$ is even, i.e., it spans a copy of $C_s^k$.
Moreover, each such copy contains exactly $s'/2-1=a_1-1$ vertices in $A'$, and thus exactly $m-a_1+1$ vertices in $B'$.

Next we choose a $K$-tiling $\K_2$ such that each copy of $K$ contains $a_1-1$ vertices in $A'$, one vertex in $V_0$ and $m-a_1$ vertices in $B'$.
This can be done by the same argument as in the proof of Theorem~\ref{lemE}.

By definitions, we have $|\K_1|\le |q|$ and $|\K_2|=|V_0|$.
By Claim~\ref{clm:size}, $|\K_1|+|\K_2|\le |q| + |V_0|\le 4\e_3 |B|$, i.e.,~\eqref{eq:K12} holds.
Let $A_1$ and $B_1$ be the sets of vertices in $A'$ and $B'$ not covered by $\K_1\cup \K_2$, respectively. 
Let $V_1 := A_1\cup B_1$.
Let $q_1= \frac{a_1}m |V_1| - |A_1|$.
Recall that if $s=2$, then $a_1=1$.
Therefore for any $s\ge 2$, by the definitions of $A_1, B_1$, we have $|A_1| = |A'| - (a_1-1)|\K_1|- (a_1-1)|V_0|$ and $|V_1| = n - m|\K_1| - m |V_0|$.
 Together with $q= \frac{a_1}mn - (|A'|+|V_0|)$, we get
\[
q_1 = \frac{a_1}{m}n - |\K_1| - |V_0| - |A'| = q - |\K_1| = q - \max\{0, q\}\le 0.
\]
Since $-\e_3 |B|\le q\le \e_3 |B|$, we get $q_1= q - \max\{0, q\} \ge -|q| \ge -2 a_1\e_3|B|$.
Thus~\eqref{eq:t} holds.
The rest of the proof is similar to the previous and is omitted.
\end{proof}

\subsection{Proof of Lemma~\ref{lem3}}
In this subsection we prove Lemma~\ref{lem3} by following the proof of \cite[Lemma 4.4]{HZ3}, which proves the case when $K= K^{(3)}(1, 1, 2)$.

We need the following result of Lu and Sz\'ekely~\cite[Theorem 3]{LuSz}.
\begin{theorem}\cite{LuSz}\label{thm:LuSz}
Let $F$ be a $k$-graph in which each edge intersects at most $d$ other edges. If $H$ is an $n$-vertex $k$-graph such that $|V(F)|$ divides $n$ and
\[
\delta_1(H)\ge \left( 1 -  \frac{1}{e(d+1+x k^2)} \right) \binom{n-1}{k-1},
\]
where $e=2.718...$ and $x=|E(F)|/|V(F)|$, then $H$ contains an $F$-factor.
\end{theorem}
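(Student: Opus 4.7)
The plan is to augment $H$ to a nearly-complete $k$-graph $\tilde H$ so that Theorem~\ref{thm:LuSz} directly produces a $K$-factor $\mathcal F$ of $\tilde H$, and then to convert $\mathcal F$ into a $K$-factor of $H$ by reshaping its copies via local vertex swaps, following the scheme of \cite[Lemma~4.4]{HZ3}.

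I would begin by setting
\begin{equation*}
E(\tilde H)=\binom{V(H)}{k}\setminus\bigl\{T\in \tbinom{V(H)}{k}:|T\cap X|=1,\ T\notin E(H)\bigr\},
\end{equation*}
so $\tilde H$ agrees with the complete $k$-graph $K_n^{(k)}$ on every $k$-subset except on $X Y^{k-1}$-sets, which are retained precisely when they are already edges of $H$. A direct calculation then yields $\delta_1(\tilde H)\ge (1-\rho)\binom{n-1}{k-1}$: a vertex $v\in X$ misses only $k$-sets of the form $\{v\}\cup S$ with $S\in\binom{Y}{k-1}$ and $\{v\}\cup S\notin E(H)$, giving $\overline\deg(v,Y)\le \rho\binom{|Y|}{k-1}$ missing edges by the first hypothesis; a vertex $u\in Y$ misses only $XY^{k-1}$ non-edges containing $u$, giving $\overline\deg(u,XY^{k-1})\le\rho\binom{|Y|}{k-1}$ missing edges by the second.

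Next I would invoke Theorem~\ref{thm:LuSz} with $F=K$ on $\tilde H$. The edge-intersection parameter $d=d(K)$ and the quantity $x=e(K)/m$ depend only on $K$, so the required threshold $1-1/(e(d+1+xk^2))$ is a fixed constant smaller than $1$; since $\rho\ll 1/m$, $\tilde H$ lies above this threshold and Theorem~\ref{thm:LuSz} produces a $K$-factor $\mathcal F$ of $\tilde H$.

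Finally I would convert $\mathcal F$ into a $K$-factor of $H$. Call a copy $K'\in\mathcal F$ \emph{canonical} if one size-$a_1$ vertex class of $K'$ coincides with $V(K')\cap X$ while every other vertex class lies in $Y$; such a copy has every edge of type $XY^{k-1}$, so by the construction of $\tilde H$ it is already contained in $H$. Non-canonical copies are repaired by swapping vertices among short sequences of copies of $\mathcal F$: one selects a small collection of copies, permutes a small number of $X$- and $Y$-vertices among them, and uses the hypotheses to verify that the resulting $m$-sets still span copies of $K$ in $H$ and that all participating copies have become canonical. At each step the hypotheses guarantee that any $a_1$-subset of $X$ together with any $(m-a_1-1)$-subset of $Y$ extends to a canonical copy of $K$ in $H$ in a $(1-O(\rho))$-proportion of the possible ways, which supplies abundant valid swap partners. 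This is the scheme used in \cite[Lemma~4.4]{HZ3} for the case $K=K^{(3)}(1,1,2)$, and its extension to general $K^{(k)}(a_1,\ldots,a_k)$ is the main technical obstacle: one must orchestrate the swaps so that repairs do not interfere with one another, and this is handled by processing the non-canonical copies in a fixed order while drawing swap partners from a freshly reserved pool of canonical copies, the size of which is controlled by $\rho\ll 1/m$.
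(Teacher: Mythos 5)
Your proposal does not prove the statement in question. The statement is the Lu--Sz\'ekely theorem itself: for an \emph{arbitrary} $k$-graph $F$ whose every edge meets at most $d$ other edges, any $n$-vertex $k$-graph $H$ with $|V(F)|$ dividing $n$ and $\delta_1(H)\ge \bigl(1-\tfrac{1}{e(d+1+xk^2)}\bigr)\binom{n-1}{k-1}$ admits an $F$-factor. What you have written is instead a sketch of Lemma~\ref{lem3} (the two-class configuration with $X$, $Y$ and parameter $\rho$), and --- fatally for the present purpose --- your second step \emph{invokes} Theorem~\ref{thm:LuSz} as a black box to extract a $K$-factor of the augmented hypergraph $\tilde H$. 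An argument that assumes the very statement it is meant to establish is circular and proves nothing. Note that the paper itself offers no proof of this theorem: it is quoted from \cite{LuSz}, where it is obtained via the lopsided Lov\'asz Local Lemma in a space of random partitions/injections (the specific constant $e(d+1+xk^2)$ arises from the local-lemma condition applied to the bad events that a prescribed block fails to span a copy of $F$). Nothing in your proposal engages with, or substitutes for, that probabilistic argument; a correct blind proof would have to reconstruct it.

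As a secondary remark, even if one reads your text as an attempt at Lemma~\ref{lem3}, the plan has a substantive hole: a $K$-factor of $\tilde H$ supplied by Theorem~\ref{thm:LuSz} can consist almost entirely of copies built from artificial edges (copies lying wholly inside $Y$, or meeting $X$ in far more than $a_1$ vertices), and the assertion that all such copies can be repaired by ``local vertex swaps'' is precisely the content that would need to be proved; it is not supplied by the stated $(1-O(\rho))$-extension property. The paper's actual proof of Lemma~\ref{lem3} proceeds quite differently: it builds an auxiliary $(k-1)$-graph of good $(k-1)$-sets on $Y$, applies Theorem~\ref{thm:LuSz} there to tile $Y$ by cliques of size $m-a_1$, and then matches these cliques to vertices of $X$ via a randomized selection plus the Marriage Theorem.
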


\begin{proof}[Proof of Lemma \ref{lem3}]
Let $t=|X|/a_1$.
Let $\G$ be the $(k-1)$-graph on $Y$ whose edges are all $(k-1)$-sets $S\subseteq Y$ such that $\overline{\deg}_{H}(S, X) < \sqrt{\rho}t$. 
First we claim that
\begin{equation}\label{eq:dG}
\delta_1(\G) \ge (1- m\sqrt{\rho}) \binom{|Y|-1}{k-2},
\end{equation}
and consequently,
\begin{equation}\label{eq:eG}
\overline{e}(\G)\le m\sqrt{\rho} \binom{|Y|}{k-1}.
\end{equation}
Suppose instead, some vertex $v\in Y$ satisfies $\overline \deg_\G(v)> m\sqrt{\rho} \binom{|Y|-1}{k-2}$. 
Since every non-neighbor $S'$ of $v$ in $\G$ satisfies $\overline \deg_H(S' \cup\{v\}, X)\ge \sqrt{\rho} t$, we have
$\overline \deg_{H}(v, XY^{k-1}) > m\sqrt{\rho} \binom{|Y|-1}{k-2} \sqrt{\rho} t$.
Since $|Y| = (m-a_1)t$, we have
\[
\overline \deg_{H}(v, XY^{k-1}) > m\rho\frac{|Y|}{m-a_1} \binom{|Y|-1}{k-2}
> \rho \binom{|Y|}{k-1},
\]
contradicting our assumption.

Let $Q$ be an $(m-a_1)$-subset of $Y$.  We call $Q$  \emph{good} (otherwise \emph{bad}) if every $(k-1)$-subset of $Q$ is an edge of $\G$, i.e., $Q$ spans a \emph{clique} of size $m-a_1$ in $\G$.
Furthermore, we say $Q$ is \emph{suitable} for a vertex $x\in X$ if $x\cup T\in E(H)$ for every $(k-1)$-set $T\subset Q$. 
Note that if an $(m-a_1)$-set is good, by the definition of $\G$, it is suitable for at least $(1- \binom{m-a_1}{k-1}\sqrt{\rho})t$ vertices of $X$.

\begin{claim}\label{clm:suitablesets}
For any $x\in X$, at least $(1-\rho^{1/4} )\binom{|Y|}{m-a_1}$ $(m-a_1)$-subsets of $Y$ are good and suitable for $x$.
\end{claim}

\begin{proof}
First by the degree condition of $H$, namely, for any $x\in X$, the number of $(m-a_1)$-sets in $Y$ that are not suitable for $x$ is at most ${\rho}\binom{|Y|}{k-1}\binom{|Y|-k+1}{m-a_1-k+1}\le \sqrt{\rho}\binom{|Y|}{m-a_1}$.
Second, by \eqref{eq:eG}, at most
\[
\overline{e}(\G) \binom{ |Y|-k+1 }{m-a_1-k+1} \le m\sqrt{\rho} \binom{|Y|}{k-1} \binom{ |Y|-k+1 }{m-a_1-k+1} \le \frac12\rho^{1/4}\binom{|Y|}{m-a_1}
\]
$(m-a_1)$-subsets of $Y$ contain a non-edge of $\G$.
Since $\rho^{1/2} + \frac12 \rho^{1/4}\le \rho^{1/4}$, the claim follows.
\end{proof}

Let $\F_0$ be the set of good $(m-a_1)$-sets in $Y$.
We will pick a family of disjoint good $(m-a_1)$-sets in $Y$ such that for any $x\in X$, many members of this family are suitable for $x$. To achieve this, we pick a family $\mathcal F$ by selecting each member of $\F_0$ randomly and independently with probability $p=4\binom{m-a_1}{k-1}\sqrt{\rho} |Y|/\binom{|Y|}{m-a_1}$.
Then $|\F|$ follows the binomial distribution $B(|\F_0|, p)$ with expectation $\mathbb{E}(|\F|) = p |\F_0| \le p\binom{|Y|}{m-a_1}$.
Furthermore, for every $x\in X$, let $f(x)$ denote the number of members of $\F$ that are suitable for $x$. Then $f(x)$ follows the binomial distribution $B(N, p)$ with $N\ge (1-\rho^{1/4} )\binom{|Y|}{m-a_1}$ by Claim \ref{clm:suitablesets}. Hence $\mathbb{E}(f(x))\ge p(1-\rho^{1/4} )\binom{|Y|}{m-a_1}$. 
Since there are at most $\binom{|Y|}{m-a_1}\cdot (m-a_1)\cdot \binom{|Y|-1}{m-a_1-1}$ pairs of intersecting $(m-a_1)$-sets in $Y$,
the expected number of intersecting pairs of $(m-a_1)$-sets in $\mathcal F$ is at most
\[
p^2 \binom{|Y|}{m-a_1}\cdot (m-a_1)\cdot \binom{|Y|-1}{m-a_1-1}=16\binom{m-a_1}{k-1}^2(m-a_1)^2 {\rho} |Y|.
\]

By Chernoff's bound (the first two properties) and Markov's bound (the last one), we can find a family $\mathcal F$ of good $(m-a_1)$-subsets of $Y$ that satisfies
\begin{itemize}
\item
$|\mathcal F|\le 2p \binom{|Y|}{m-a_1} \le 8\binom{m-a_1}{k-1}\sqrt{\rho} |Y|$,
\item
for any vertex $x\in X$, at least
$\frac{p}2 (1- \rho^{1/4})\binom{|Y|}{m-a_1}  = 2\binom{m-a_1}{k-1}(1-\rho^{1/4})\sqrt{\rho}|Y|$
members of $\mathcal F$ are suitable for $x$.
\item
the number of intersecting pairs of $(m-a_1)$-sets in $\mathcal F$ is at most $32\binom{m-a_1}{k-1}^2(m-a_1)^2 {\rho} |Y|$.
\end{itemize}
After deleting one $(m-a_1)$-set from each of the intersecting pairs from $\F$, we obtain a family $\mathcal F'\subseteq \F$ consisting of at most $8\binom{m-a_1}{k-1}\sqrt{\rho} |Y|$ disjoint good $(m-a_1)$-subsets of $Y$ and for each $x\in X$, at least
\begin{equation}\label{eq:F'}
2\binom{m-a_1}{k-1}(1-\rho^{1/4})\sqrt{\rho}|Y| - 32\binom{m-a_1}{k-1}^2(m-a_1)^2 {\rho} |Y|\ge \binom{m-a_1}{k-1}\sqrt{\rho}|Y|
\end{equation}
members of $\mathcal F'$ are suitable for $x$.

Denote $\F'$ by $\{Q_1, Q_2, \dots, Q_{q} \}$ for  some $q\le 8\binom{m-a_1}{k-1}\sqrt{\rho} |Y|$. 
Let $Y_1= Y \setminus V(\F') $ and $\G'=\G[Y_1]$. Then $|Y_1|=|Y|-(m-a_1)q$. Since $\overline \deg_{\G'}(v)\le \overline \deg_\G(v)$ for every $v\in Y_1$, we have, by \eqref{eq:dG},
\[
\delta_1(\G') \ge \binom{|Y_1|-1}{k-2} -  m\sqrt{\rho} \binom{|Y|-1}{k-2}\ge (1-2m\sqrt{\rho})\binom{|Y_1|-1}{k-2}.
\]
By the choice of $\rho$ and Theorem~\ref{thm:LuSz}, $\G'$ contains a perfect tiling $\{Q_{q+1},\dots, Q_t\}$ such that each $Q_i$ is a clique on $m-a_1$ vertices for $q+1\le i\le t$.

Consider the bipartite graph $\Gamma$ between $X$ and $\Q:= \{Q_1, Q_2, \dots, Q_{t}\}$ such that $x\in X$ and $Q_i\in \Q$ are adjacent if and only if $\Q_i$ is suitable for $x$. 
For every $i\in [t]$, since each $Q_i$ is a clique in $\G$, we have $\deg_{\Gamma} (Q_i) \ge |X| - \binom{m-a_1}{k-1}\sqrt \rho t$ by the definition of $\G$. 
Let $\Q'= \{Q_{q+1}, \dots, Q_{t}\}$ and $X_0$ be the set of $x\in X$ such that $\deg_{\Gamma}(x, \Q')\le |\Q'|/2$. Then
\[
|X_0| \frac{|\Q'|}2 \le \sum_{x\in X} \overline{\deg}_{\Gamma}(x, \Q') \le \binom{m-a_1}{k-1} \sqrt \rho t\cdot |\Q'|,
\]
which implies that
$|X_0| \le 2\binom{m-a_1}{k-1} \sqrt\rho t = 2\binom{m-a_1}{k-1}\sqrt{\rho} \frac{|Y|}{m-a_1}\le \binom{m-a_1}{k-1} \sqrt\rho |Y|$ (since $m-a_1\ge 2$).

We now find a perfect tiling of $K^{(2)}(1,a_1)$ in $\Gamma$ such that the center of each $K^{(2)}(1,a_1)$ is in $\Q$.

\begin{enumerate}
\item[Step 1:]
Each $x\in X_0$ is matched to some $Q_{i}$, $i\in [q]$ that is suitable for $x$ -- this is possible because of \eqref{eq:F'} and
$|X_0| \le \binom{m-a_1}{k-1} \sqrt\rho |Y|$.
\item[Step 2:]
Each $Q_{i}$, $i\in [q]$ is matched with $a_1-1$ or $a_1$ more vertices in $X\setminus X_0$ -- this is possible because $\deg_{\Gamma} (Q_i) \ge |X| - \binom{m-a_1}{k-1}\sqrt \rho t \ge |X_0| + a_1 q$.
Thus, all $\Q_i$, $i\in [q]$ are covered by vertex-disjoint copies of $K^{(2)}(1,a_1)$.
\item[Step 3:]
Let $X'$ be the set of uncovered vertices in $X$ and note that $|X'|=a_1 t - a_1 q = a_1 |\Q'|$.
Partition $X'$ arbitrarily into $X_1, \dots, X_{a_1}$ each of size $|\Q'|$.
Note that for each $i\in [a_1]$, we have for all $x\in X_i$, $\deg_{\Gamma}(x, \Q')> |\Q'|/2$ and for all $Q_j\in \Q'$, $\deg_{\Gamma}(Q_j, X_i) \ge |X_i| - \binom{m-a_1}{k-1}\sqrt \rho t \ge |X_i|/2$.
So the Marriage Theorem provides a perfect matching in each $\Gamma[X_i, \Q']$, $i\in [a_1]$ and thus we get a perfect tiling of $K^{(2)}(1,a_1)$ on $X'\cup \Q'$ in $\Gamma$.
\end{enumerate}
The perfect tiling of $K^{(2)}(1,a_1)$ in $\Gamma$ gives rise to the desired $K$-factor in $H$.
\end{proof}

\section{Concluding Remarks}

In this paper we study the minimum codegree threshold $\delta(n, K)$ for tiling complete $k$-partite $k$-graphs $K$ perfectly when $\gcd(K)=1$.
By Proposition~\ref{disprove}, $\delta(n, K)\ge n/m + {\rm coex}(\frac{m-1}{m}n+1, K)$ when $a_1=1$.
In view of this and Theorem~\ref{thm:main}, it is interesting to know if one can replace the second term in~\eqref{eq:deg} by a term similar to ${\rm coex}(\frac{m-a_1}{m}n+1, K)$.
Moreover, it is interesting to know if ${\rm ex}(n, K)/\binom{n}{k-1}$ (or ${\rm coex}(n, K)$) is monotone on $n$ so that the maximization in~\eqref{eq:deg} could be avoided.

Following the notion in \cite{My14}, we call $k$-partite $k$-graphs satisfying the three lines of~\eqref{eq:nk1} type 0, type 1 and type $d$, respectively.
For complete $k$-partite $k$-graphs $K$ of type $d$ for an even $d\ge 0$, a simple application of the absorbing method together with Lemma~\ref{lemK} implies that $\delta(n, K) = n/2 + o(n)$, which gives a reproof of the result of Mycroft~\cite{My14} without using the Hypergraph Blow-up Lemma.
We think that a further sharpening is possible by a careful analysis on the extremal case, to which we shall return in the near future. 

Suppose $K := K^{(k)}(a_1,\dots, a_k)$. When $K$ is type 1 (namely, $\gcd(K)=1$), Proposition~\ref{disprove} and Theorem~\ref{thm:112} settle Conjecture~\ref{conj:Mycroft} (either negatively or positively).
Now we give a construction showing that Conjecture~\ref{conj:Mycroft} is false for many other complete $k$-partite $k$-graphs, for which we need to recall a construction by Mycroft~\cite{My14}.
%For any integer $k\ge 3$, let $p$ be the smallest prime factor of $k$.
Fix a prime number $p$. %and consider the following vectors in $\mathbb{Z}_p^p$.
Let $\bfu_i\in \mathbb{Z}_p^p$ be the unit vector whose $i$th coordinate is one.  
For $1\le i<p$, let $\bfv_i=\bfu_i + (i-1)\bfu_p$.
Let $L$ be the (proper) sublattice of $\mathbb{Z}_p^p$ generated by $\bfv_1,\dots, \bfv_{p-1}$.
The following property was proved in~\cite[Section 2]{My14}.
% and we include a proof for completeness.
\begin{itemize}\label{claim:L}
\item[($\dagger$)] For any vector $\bfv\in \mathbb{Z}_p^p$, there exists precisely one $i\in [p]$ such that $\bfv + \bfu_i\in L$.
\end{itemize}
Let $\cP=\{V_1, V_2, \dots, V_p\}$ be a partition of $V$ such that $|V_1|+|V_2|+\cdots +|V_p|=n$, $|V_i| = n/p \pm 1$ for $i\in [p]$ and $\bfi_\cP(V)\notin L \pmod{p} $ (recall that $\bfi_\cP(S)$ is the vector of $\mathbb{Z}^p$ whose $i$th coordinate is $|S\cap V_i|$).
%(on a co-ordinate by co-ordinate basis).
%Note that the restriction on the sizes is possible because $L$ is transferral-free: either $(n/p, \dots, n/p)$ or $(n/p-1, n/p+1, n/p,\dots, n/p)$ is not in $L$ modulo $p$.
Let $H_p$ be the $k$-graph on $V$ whose edges are $k$-sets $e$ such that $\bfi_{\cP}(e)\in L \pmod{p}$.
Observe that ($\dagger$) implies that $\delta_{k-1}(H_p) \ge n/p-k$.
%Indeed, for any $(k-1)$-set of vertices $S$, by the second part of Claim~\ref{claim:L}, there exists $i\in [p]$ such that $\bfi_{\cP}(S) + \bfu_i\in L$.
%By the definition of $H_k$, we have $V_i\setminus S\subseteq N_{H_k}(S)$ and thus $\deg_{H_k}(S)\ge |V_i\setminus S| \ge n/p-1-(k-1)=n/p-k$.

\begin{proposition}\cite{My14}
\label{pro:Myc}
Suppose that $K$ is a complete $k$-partite $k$-graph of type $d$ for some $d\ne 1$. 
Let $p$ be the smallest prime factor of $d$ (thus $p=2$ when $d=0$).
Then $H_p$ contains no $K$-factor. 
\end{proposition}

Using $H_p$ and the construction behind Proposition~\ref{prop:lb}, we disprove Conjecture~\ref{conj:Mycroft} for many complete $k$-partite $k$-graphs of type $d\ne 1$. 
\begin{proposition}\label{disprove2}
Let $K := K^{(k)}(a_1,\dots, a_k)$ be type $d\ne 1$ and let $p$ be the smallest prime factor of $d$.
Then Conjecture~\ref{conj:Mycroft} is false for $K$ if $a_{k-2}\ge p+1$. 
%and for $K$ of type $d\ge 2$ if $a_{k-2}\ge p+1$, where $p$ is 
\end{proposition}

\begin{proof}
%Let $p$ be the smallest prime factor of $d$;
%We define $p=p(d)$ for an integer $d\ge 2$ or $d=0$ as follows.
%If $d\ge 2$, let $p$ be the smallest prime factor of $d$; if $d=0$, let $p=2$.
%Now assume $K := K^{(k)}(a_1,\dots, a_k)$ is such that $K$ is type 0 or type $d\ge 2$ and $a_{k-2}\ge p+1$.
Let $n\in (a_1+\cdots+a_k) \mathbb{Z}$ and let $G$ be an $n$-vertex $K^{(k)}(1,\dots,1,2,2)$-free $k$-graph with $\delta_{k-1}(G) = (1-o(1))\sqrt{n}$ provided by Proposition~\ref{prop:lb}.
We first take a copy of an $n$-vertex $k$-graph $H_p$ under a vertex partition $V_1, V_2, \dots, V_p$.
Then we take a random permutation of $V(G)$ and then add $E(G)$ on top of $H_p$. 
Denote the resulting graph by $H$.
%Let $A$ be a random subset of $V(G)$ such that $|A|=n/2$ or $n/2-1$ and $a\nmid |A|$, and let $B = V(G)\setminus A$.
%Let $H$ be a $k$-graph on $V(G)$ with $E(H) = E(G)\cup E_{even}$, where $E_{even}$ (or $E_{odd}$) consists of all $k$-sets $e\subset V(G)$ such that $|e\cap A|$ is even (or odd).
By standard concentration results, we have $\delta_{k-1}(H) \ge n/p + (1-o(1))\sqrt{n}/p$.
%It is not hard to see that if a copy of $K$ in $H$ contains an edge in $E(G)\setminus E_{even}$, then it contains a copy of $K_{1,\dots,1,2,2}$ in $E(G)\setminus E_{even}$, which is impossible because $G$ is $K_{1,\dots,1,2,2}$-free. 
We claim that 
\begin{itemize}
\item[($\ddagger$)] {every copy of $K$ in $H$ has each of its vertex classes completely in $V_i$ for some $i\in [p]$.}
\end{itemize}
%Note that if this holds, since $G$ is $K$-free and $K$ is a blow-up of an edge, we infer that all copies of $K$ in $H$ are actually in $H_p$.
Suppose instead, there exist distinct $i_1, i_2\in [p]$ and a copy of $K$ in $H$ with vertex classes $U_1,\dots, U_k$ such that $U_l\cap V_{i_1} \ne \emptyset$ and $U_l\cap V_{i_2} \ne \emptyset$ for some $l\in [k]$. 
%Without loss of generality, assume $l=1$.
%For $i,$et $A_l:=U_l\cap A$, $B_l:=U_l\cap B$. 
For $i\in [k]\setminus \{l\}$, let $C_i= U_i\cap V_j$ such that $|U_i\cap V_j|\ge |U_i\cap V_{j'}|$ for all $j'\in [p]$ (if more than one $j$ satisfies this, choose any of them).  
Since $a_k\ge a_{k-1}\ge a_{k-2}\ge p+1$, by the pigeonhole principle, we have $|C_i|\ge 2$ for any $i\in \{k-2, k-1, k\}\setminus \{l\}$.
Thus both $(U_l\cap V_{i_1}) \cup \bigcup_{j\neq l} C_j$ and $(U_l\cap V_{i_2}) \cup \bigcup_{j\neq l} C_j$ contain $K^{(k)}(1,\dots,1,2,2)$ as a subgraph.
Since $G$ is $K^{(k)}(1,\dots,1,2,2)$-free, both copies of $K^{(k)}(1,\dots,1,2,2)$ contain an edge in $H_p$.
The index vectors of these two edges can be written as $\bfv + \bfu_{i_1}$ and $\bfv + \bfu_{i_2}$ for some $\bfv$, contradicting~($\dagger$).
%Furthermore, by ($\dagger$), either $(U_l\cap V_i) \cup \bigcup_{j\neq l} C_j$ or $(U_l\cap V_j) \cup \bigcup_{j\neq l} C_j$ spans a complete $k$-partite $k$-graph with all edges whose index vector not in $L$.
%Therefore, we conclude that $G$ contains a copy of $K^{(k)}(1,\dots,1,2,2)$, a contradiction.

Since $G$ is $K$-free, each copy $K_1$ of $K$ in $H$ must contain some edge $e$ in $H_p$.
%Suppose $H$ contains a copy of $K$ such that some $e\in E(K)$ is in $H_p$.  
Since $K$ is a blow-up of $e$,  ($\ddagger$)  implies that $\bfi_\cP(e')= \bfi_\cP(e)\in L \pmod{p}$ for all $e'\in E(K_1)$.
Thus, all copies of $K$ in $H$ are actually in $H_p$.
Therefore, $H$ has no $K$-factor by Proposition~\ref{pro:Myc}. 
%Thus all the edges of $K$ can be found in $H_p$. This implies that every copy of $K$ in $H$ is either completely in $H_p$ or completely in $G$, which is impossible. If $H$ contains a $K$-factor, then all copies of $K$ are indeed in $H_p$. This contradicts Proposition~\ref{pro:Myc}. 
\end{proof}

When $K=K^{(k)}(a_1,\dots, a_k)$ is type $d\ne 1$, Proposition~\ref{disprove2} leaves out the following unsettled cases for Conjecture~\ref{conj:Mycroft}: $K^{(k)}(2,\dots, 2, 2s, 2t)$ for $t\ge s\ge 1$ (type 0), and $K^{(k)}(a_1,\dots, a_1, a_{k-1}, a_k)$ (type $d\ge 2$). To see how to derive the second case, we note that if $K$ is type $d\ge 2$ and $p$ is the smallest prime factor of $d$, then $a_1\ge 1$ and $a_{k-2}\le p$ force that $a_1= a_{k-2}$.

\section*{Acknowledgment}

The authors thank Dhruv Mubayi for helpful discussions.
The authors also thank an anonymous referee for his/her many valuable comments that improve the presentation of the paper. In particular, the referee suggested to find more counterexamples to Conjecture~\ref{conj:Mycroft} than what we had in an earlier version of the paper. We followed this comment and derived Proposition~\ref{disprove2} eventually.

\bibliographystyle{abbrv}
\bibliography{Sep2016}

\end{document}